\newtheorem{theorem}{Theorem}[section]
\newtheorem{proposition}[theorem]{Proposition}
\newtheorem{lemma}[theorem]{Lemma}
\newtheorem{corollary}[theorem]{Corollary}
\newtheorem{conjecture}[theorem]{Conjecture}
\theoremstyle{definition}
\theoremstyle{remark}
\newtheorem{remark}[theorem]{Remark}
\renewcommand{\d}{\mathrm{d}}
\newcommand{\eps}{\varepsilon}
\newcommand{\DeclareMyMathOperator}[1]{%
	\expandafter\DeclareMathOperator\csname my#1\endcsname{#1}
}
\newcommand{\DeclareMathOperators}{\forcsvlist{\DeclareMyMathOperator}}
\def\Hspace{\mathcal H}
\def\PN{P_N}
\def\EDo{-1,1}
\def\T{T}
\newcommand{\per}{\mathrm{per}}
\title{Pointwise and uniform convergence of Fourier extensions}
\author[1]{Marcus Webb\footnote{Email: \texttt{marcus.webb@manchester.ac.uk}. Website: \texttt{https://personalpages.manchester.ac.uk/staff/marcus.webb/}.}}
\author[2]{Vincent Copp\'e\footnote{Email: \texttt{vincent.coppe@cs.kuleuven.be}. Website: \texttt{http://people.cs.kuleuven.be/\textasciitilde vincent.coppe}.}}
\author[2]{Daan Huybrechs\footnote{Email: \texttt{daan.huybrechs@cs.kuleuven.be}. Website: \texttt{http://people.cs.kuleuven.be/\textasciitilde daan.huybrechs}.}}
\affil[1]{Department of Mathematics, University of Manchester, United Kingdom}
\affil[2]{Department of Computer Science, KU Leuven, Belgium}
\date{\today}
\begin{document}

\maketitle

\begin{abstract}
Fourier series approximations of continuous but nonperiodic functions on an interval suffer the Gibbs phenomenon, which means there is a permanent oscillatory overshoot in the neighbourhoods of the endpoints. Fourier extensions circumvent this issue by approximating the function using a Fourier series which is periodic on a larger interval. Previous results on the convergence of Fourier extensions have focused on the error in the $L^2$ norm, but in this paper we analyze pointwise and uniform convergence of Fourier extensions (formulated as the best approximation in the $L^2$-norm). We show that the pointwise convergence of Fourier extensions is more similar to Legendre series than classical Fourier series. In particular, unlike classical Fourier series, Fourier extensions yield pointwise convergence at the endpoints of the interval. Similar to Legendre series, pointwise convergence at the endpoints is slower by an algebraic order of a half compared to that in the interior. The proof is conducted by an analysis of the associated Lebesgue function, and Jackson- and Bernstein-type theorems for Fourier extensions. Numerical experiments are provided. We conclude the paper with open questions regarding the regularized and oversampled least squares interpolation versions of Fourier extensions.
\end{abstract}

 \noindent \textbf{Keywords} \, Fourier extension, Lebesgue function, Legendre polynomials on a circular arc, constructive approximation \\

 \noindent \textbf{Mathematics Subject Classification (2010)} \, 42A10, 41A17, 65T40, 42C15

\section{Introduction}\label{sec:intro}

The Fourier series of a periodic function converges spectrally fast with respect to the number of terms in the series, that is, with an algebraic order which increases with the number of available derivatives and exponentially fast for analytic functions. Furthermore, the truncated Fourier series can be approximated via the FFT in a fast and stable manner \cite{wright2015extension}. As such, it is the go-to approach to approximate a periodic function. However, when the function in question is nonperiodic, the situation is very different. Regardless of how smooth this function is, convergence is slow in the $L^2$ norm and there is a permanent oscillatory overshoot close to the endpoints due to the Gibbs phenomenon \cite{zygmund2002trigonometric}.

Fourier extensions have been shown to be an effective means for the approximation of nonperiodic functions while avoiding the Gibbs phenomenon \cite{adcock2014resolution,adcock2014numerical,boyd2002comparison,huybrechs2010fourier,lyon2011fast,lyon2012approximation,matthysen2016fast}. The idea is as follows: For a function $f \in L^2(-1,1)$, consider an approximant $f_N$ given by
\begin{equation}\label{eqn:FE1}
f_N(x) = \sum_{k=-n}^n c_k e^{\frac{i\pi}{\T}kx}, \qquad N = 2n+1,
\end{equation}
where the coefficients $c_{-n},\ldots c_n$ are chosen to minimise the error $\|f - f_N\|_{L^2(-1,1)}$, and $T > 1$ is a user-determined parameter. This approximant $f_N$ is the $n$th Fourier extension of $f$ to the periodic interval $[-T,T]$. For the purposes of this paper, others kinds of Fourier extension, which might come from a discrete sampling of $f$ or regularization, are a modification of this\footnote{Articles such as \cite{adcock2014numerical} refer to this type of Fourier extension as the \emph{exact continuous} Fourier extension.}.

There are many approximation schemes that avoid the Gibbs phenomenon. Chebyshev polynomial interpolants such as those implemented in the Chebfun \cite{driscoll2014chebfun,trefethen2013approximation} and ApproxFun \cite{olver2018approxfun} software packages are extremely successful, so why consider Fourier extensions? First, discrete collocation versions of Fourier extensions sample the function on equispaced or near-equispaced grids, which in some situations are more natural than Chebyshev grids, which cluster near the endpoints \cite{adcock2014parameter}. Second, the approach generalises naturally to higher dimensions. If one has a function on a bounded subset $\Omega \subset \mathbb{R}^d$, then one can use multivariate Fourier series which are periodic on a $d$-dimensional bounding box containing $\Omega$ \cite{boyd2005fourier,huybrechs2016computing,matthysen2018function}. Modifications of Fourier extensions which use discete samples of a function are particularly relevant in this generalization, because the integrals defining the $L^2(\Omega)$ norm can be difficult to compute.

Fourier extensions can be computed stably in $\mathcal{O}(N\log^2(N))$ floating point operations, with the following important caveats (\cite{karnik2017fast,matthysen2016fast,lyon2011fast}). Computation of $f_N$ is equivalent to inversion of the so-called prolate matrix \cite{varah1993prolate}, which is a Toeplitz matrix $G \in \mathbb{R}^{N\times N}$ with entries $G_{k,j} = \mathrm{sinc}\left( (k-j)\frac{\pi}{\T}\right)$, with right-hand-side vector $\mathbf{b}\in\mathbb{C}^N$ with entries $b_k = \left(\frac\T2\right)^{\frac12}\int_{-1}^1 e^{-\frac{i\pi}{\T}kx}f(x)\,\d x$ \cite{matthysen2016fast}. The prolate matrix is exponentially ill-conditioned \cite[Eq.~63]{slepian1978prolate}, so computation of the exact Fourier extension is practically impossible, even for moderately sized $N$. However, a truncated Singular Value Decomposition (SVD) solution is only worse than the exact solution (in the $L^2(-1,1)$ norm) by a small factor $\mathcal{O}(\varepsilon^{\frac12})$ in the limit as $N \to \infty$, where $\varepsilon > 0$ is the truncation parameter \cite{adcock2014numerical,adcock2018frames}. Furthermore, using an oversampled least squares interpolation in equispaced points in $[-1,1]$ can bring this down to $\mathcal{O}(\varepsilon)$ for a sufficient oversampling rate \cite{adcock2014numerical,adcock2018frames,adcock2017frames}. At the heart of these facts is the observation that while the Fourier basis on $[-T,T]$ does not form a Schauder basis for $L^2(-1,1)$, it satisfies the weaker conditions of a \emph{frame} \cite{adcock2018frames}.
 
Fourier extensions which approximate a truncated SVD solution rather than the exact solution are called \emph{regularized} Fourier extensions. An approximate SVD of the prolate matrix can be computed in $\mathcal{O}(N\log^2(N))$ operations using the Fast Fourier Transform (FFT) and exploiting the so-called plunge region in the profile of its singular values \cite{karnik2017fast}. This is a vast improvement on $\mathcal{O}(N^3)$ operations for a standard SVD. Fast algorithms for regularized, oversampled least squares interpolation Fourier extensions were developed in \cite{matthysen2016fast}, building on the work of Lyon \cite{lyon2011fast}. 
 
Previous convergence results on Fourier extensions have focused on convergence in the $L^2$ norm, because the Fourier extension by definition minimizes the error in the $L^2$ norm over the approximation space. Convergence in $L^2$ of algebraic order $k$ for functions in the Sobolev space $H^k(-1,1)$ was proved by Adcock and Huybrechs \cite[Thm.~2.1]{adcock2014numerical}. It follows immediately that convergence is superalgebraic for smooth functions. Exponential convergence in $L^2$ and $L^\infty$ norms for analytic functions was proved by Huybrechs for $T =2$ \cite{huybrechs2010fourier} and by Adcock et al. for general $T > 1$ \cite{adcock2014numerical}. The proofs of exponential convergence appeal to connections between the Fourier extension problem and the sub-range Chebyshev polynomials \cite{adcock2014numerical}, for which series approximations converge at an exponential rate which depends on analyticity in Bernstein ellipses in the complex plane. Regarding pointwise convergence of Fourier extensions for non-analytic functions, there are no proofs in the literature. Some numerical exploration of pointwise convergence appears in \cite[Sec.~2]{bruno2007accurate}, but a rigorous theoretical foundation is lacking.

\subsection{Summary of new results}

In this paper we prove that for $f$ in the H\"older space $C^{k,\alpha}([-1,1])$,
\begin{equation}\label{eqn:mainresult}
f(x) - f_N(x) = \begin{cases}
\mathcal{O}(N^{-k-\alpha}\log(N)) & \text{ for } x \in [a,b] \subset (-1,1) \\
\mathcal{O}(N^{\frac12-k-\alpha})) & \text{ for } x \in [-1,1],
\end{cases}
\end{equation}
see Theorem \ref{thm:Linftyalgebraic}. The factors of $\log(N)$ and $N^{\frac12}$ come from bounds on the Lebesgue function associated with Fourier extension derived in Section \ref{sec:prolatekernel}, and the factor of $N^{-k-\alpha}$ comes from a Jackson-type theorem proved for Fourier extensions derived in Section \ref{sec:bestuniform} on best uniform approximation by Fourier extensions. 

This factor of $N^{-k-\alpha}$ can be pessimistic if $f$ is least regular at the boundary; in Section \ref{sec:bestuniform} we discuss how a weighted form of regularity (as opposed to H\"older regularity taken uniformly over the interval $[-1,1]$) might yield a more natural correspondence between regularity and convergence rate. This is precisely the case in best polynomial approximation on an interval, where weighted moduli of continuity have a tight correspondence with best approximation errors \cite[Ch.~7, Thm.~7.7]{devore1993constructive}.

From equation \eqref{eqn:mainresult}, it is immediate that if $f \in C^{\alpha}([-1,1])$ where $\alpha \in (0,1)$, then $f_N$ converges to $f$ uniformly in any subinterval $[a,b] \subset (-1,1)$, and if $\alpha > \frac12$, then we get uniform convergence over the whole interval $[-1,1]$. 

We also prove a local pointwise convergence result, which states that if $f \in L^2(-1,1)$, but $f$ is uniformly Dini--Lipschitz in a subinterval $[a,b]$, then the Fourier extension converges uniformly in compact subintervals of $(a,b)$ (see Theorem \ref{thm:DiniLipschitz}). This is done by generalizing a localization theorem of Freud on convergence of orthogonal polynomial expansions in $[-1,1]$ (see Section \ref{sec:localization}).
 
 A key insight of this paper is that the kernel associated with approximation by Fourier extension has an explicit formula which is related to the Christoffel-Darboux kernel of the \emph{Legendre polynomials on a circular arc} (see Lemma \ref{lem:prolatekernelformula}). The asymptotics of these polynomials were derived by Krasovksy using Riemann--Hilbert analysis \cite{krasovsky2004gap,kuijlaars2004riemann,deift1999orthogonal}, which we use to derive asymptotics of the kernel. The Lebesgue function for Fourier extensions are estimated using these asymptotics in Theorem \ref{thm:lebesguefunction}. We find that the Lebesgue function is $\mathcal{O}(\log(N))$ in the interior of $[-1,1]$ and $\mathcal{O}(N^{\frac12})$ globally. This is just as with the Lebesgue function for Legendre series, and distinct from classical Fourier series which has a $\mathcal{O}(\log N)$ Lebesgue function over the full periodic interval.
 
The results of this paper would become more interesting when they can be extended to regularized and oversampled interpolation versions of Fourier extensions, because as discussed above, these are the versions for which stable and efficient algorithms have been developed. The multivariate case is another direction this line of inquiry would ideally lead. We briefly discuss future research like this in Section \ref{sec:discussion}.
 
 The paper is structured as follows. Section 2 recounts the known results about convergence of Fourier extensions in the $L^2$ norm. Section 3 gives new pointwise and uniform convergence theorems along with proofs which depend on results proved in the self-contained Sections 4, 5, and 6. Section 4 is on the Lebesgue function for Fourier extensions. Section 5 is on uniform best approximation for Fourier extensions, in which Jackson- and Bernstein-type theorems are proved. Section 6 is on an analogue of Freud's localization theorem for Fourier extensions. Section 7 provides the reader with results from numerical experiments, and Section 8 provides discussion. The appendix contains a derivation of asymptotics of Legendre polynomials on a circular arc, on the arc itself, from the Riemann--Hilbert analysis of Krasovsky \cite{krasovsky2004gap,kuijlaars2004riemann,deift1999orthogonal}.

\section{Convergence of Fourier extensions in $L^2$}\label{sec:L2convergence}
 In this section we summarise the already known results regarding convergence in the $L^2$ norm.

\subsection{Exponential convergence}

As is discussed in \cite{huybrechs2010fourier,adcock2014resolution}, the Fourier extension $f_N$ in equation \eqref{eqn:FE1} is a polynomial in the mapped variable $t = m(x)$, where
\begin{equation*}
m(x) = 2 \frac{\cos\left(\frac{\pi}{\T}x \right) - \cos\left(\frac{\pi}{\T} \right)}{1-\cos\left(\frac{\pi}{\T}\right)} - 1.
\end{equation*}
This change of variables transforms the Fourier extension problem into two series expansions in modified Jacobi polynomials \cite{huybrechs2010fourier}. Since exponential convergence in this setting is dictated by Bernstein ellipses in the complex plane, which are defined to be the closed contours,
\begin{equation*}
\mathcal{B}(\rho) = \left\{  \frac{1}{2}\left( \rho e^{i\theta} + \frac{1}{\rho} e^{-i\theta}\right) :  \theta\in[-\pi,\pi] \right\}, \qquad \rho > 1,
\end{equation*}
it makes sense to consider the mapped contours,
\begin{equation}\label{eqn:Drho}
\mathcal{D}(\rho) := m^{-1} \left( \mathcal{B}(\rho) \right),
\end{equation}
 as a candidate for determining the rate of exponential convergence for Fourier extensions. They are indeed the relevant contours, as was proven in the following theorem.
\begin{theorem}[Adcock--Huybrechs {\cite[Thm.~3.14]{huybrechs2010fourier},\cite[Thm.~2.3]{adcock2014numerical}}]\label{thm:analyticexactsobolev}
  If $f$ is an analytic function in $\mathcal{D(\rho^\star)}$ and continuous on $\mathcal{D(\rho^\star)}$ itself, then
  \begin{equation*}
  \|f-f_N\|_{L^2(\EDo)} = \mathcal{O}(\rho^{-n}) \|f\|_{L^\infty(\mathcal{D}(\rho))},
  \end{equation*}
  where $\rho < \min\left\{\rho^\star, \cot^2\left(\frac{\pi}{4\T}\right)\right\}$ and $N = 2n+1$. The constant in the big $\mathcal{O}$ depends only on $T$.
\end{theorem}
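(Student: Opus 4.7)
The plan is to reduce the Fourier extension problem to two standard best weighted-polynomial approximation problems via the change of variable $t=m(x)$, and then invoke classical exponential convergence results for Jacobi expansions of functions analytic in Bernstein ellipses. First I would split $f = f_e + f_o$ into even and odd parts. Because the Fourier extension space on $[-T,T]$ is invariant under the reflection $x \mapsto -x$, and the best $L^2(-1,1)$ approximation is a linear projection, the Fourier extension decomposes as $f_N = (f_e)_N + (f_o)_N$, where the even part is a cosine sum and the odd part is a sine sum. I would then observe that $\cos(k\pi x/T)$, for $k=0,\ldots,n$, is a polynomial of degree $k$ in $\cos(\pi x/T)$, and hence (by a linear rescaling) of degree $k$ in $t = m(x)$; likewise $\sin(k\pi x/T)$ factors as $\sin(\pi x/T)$ times a polynomial of degree $k-1$ in $t$.

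Second, I would rewrite the $L^2(-1,1)$ best approximation as two separate weighted best polynomial approximation problems on $[-1,1]$ in the variable $t$, with Jacobi-type weights coming from $|dx/dt|$ (and an extra factor of $\sin^2(\pi x/T)$ in the odd case). Concretely, the even coefficients minimize $\int_{-1}^1 |f_e(m^{-1}(t)) - p_n(t)|^2 w_e(t)\,dt$ over polynomials $p_n$ of degree $n$, and analogously for $f_o(m^{-1}(t))/\sin(\pi m^{-1}(t)/T)$ with weight $w_o$. The apparent singularity at $x=0$ in the odd quotient is removable because $f_o$ vanishes to odd order there. Having cast the problem in this form, the classical Jackson-type theorem for Jacobi expansions applies: if $g$ is analytic inside the Bernstein ellipse $\mathcal{B}(\rho)$ and continuous on $\mathcal{B}(\rho)$, then the weighted best polynomial approximation of degree $n$ decays like $\rho^{-n}\|g\|_{L^\infty(\mathcal{B}(\rho))}$, with a constant depending only on the Jacobi parameters.

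Third, I would transfer the analyticity hypothesis: since $\mathcal{D}(\rho) = m^{-1}(\mathcal{B}(\rho))$, analyticity of $f$ in $\mathcal{D}(\rho^\star)$ gives analyticity of $t \mapsto f_e(m^{-1}(t))$ and $t \mapsto f_o(m^{-1}(t))/\sin(\pi m^{-1}(t)/T)$ in $\mathcal{B}(\rho)$ for every $\rho<\rho^\star$, provided the preimage $\mathcal{D}(\rho)$ remains inside a domain on which $m$ is univalent and on which $\sin(\pi x/T)$ has no zeros other than $x=0$. The next zero lies at $x=\pm T$, and a direct computation of $m(\pm T)$ in terms of $\cos(\pi/T)$ shows that $\mathcal{D}(\rho)$ first reaches $\pm T$ precisely when $\rho = \cot^2(\pi/(4T))$, giving the stated constraint $\rho < \min\{\rho^\star,\cot^2(\pi/(4T))\}$. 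Combining the two resulting bounds yields the theorem, with the sup norms over $\mathcal{B}(\rho)$ converted to sup norms over $\mathcal{D}(\rho)$ via the maximum principle and the explicit form of $m$.

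The step I expect to be the main technical obstacle is the identification of the critical value $\cot^2(\pi/(4T))$: one must track how the family of Bernstein ellipses is deformed by $m^{-1}$ in the complex plane and confirm that the first obstruction to analyticity of the transformed functions is indeed the zero of $\sin(\pi x/T)$ at $x=\pm T$, rather than some branch point or self-intersection of $\mathcal{D}(\rho)$ arising from $m$ itself.
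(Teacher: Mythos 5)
This theorem is stated in the paper without proof: it is imported verbatim from Huybrechs (2010, Thm.~3.14) and Adcock--Huybrechs (2014), and the paper's Section~2.1 only sketches the underlying mechanism (the map $t=m(x)$ and the mapped Bernstein ellipses $\mathcal{D}(\rho)=m^{-1}(\mathcal{B}(\rho))$). Your proposal reconstructs exactly the strategy of those cited references -- even/odd splitting, reduction to two weighted (modified Jacobi) polynomial approximation problems in $t$, and transfer of analyticity through $m^{-1}$ -- so it is the same approach as the source, and it is sound. The step you flag as the main obstacle does resolve as you anticipate: the branch points of $m^{-1}$ are the zeros of $m'$, i.e.\ of $\sin(\pi x/T)$, and since $m(\pm T)=-\bigl(3+\cos(\pi/T)\bigr)/\bigl(1-\cos(\pi/T)\bigr)$ coincides with the real extremity $-\tfrac12(\rho+\rho^{-1})$ of $\mathcal{B}(\rho)$ precisely when $\rho=\cot^2\bigl(\tfrac{\pi}{4T}\bigr)$, the ellipse first meets an obstruction at $x=\pm T$, which yields the stated cap on $\rho$.
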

Note that there is a $T$-dependent upper limit on the rate of exponential convergence.

\subsection{Algebraic convergence}
For functions in the Sobolev space $H^k(\EDo)$ of $L^2(-1,1)$ functions whose $k$th weak derivatives are in $L^2(-1,1)$, we have algebraic convergence of order $k$.
\begin{theorem}\label{thm:lsproj}[Adcock--Huybrechs \cite[Thm.~2.1]{adcock2014resolution}]
  If $f\in H^k(\EDo)$, then	
  \begin{equation*}
  \|f-f_N\|_{L^2(\EDo)} = \mathcal{O}(N^{-k}) \|f\|_{H^k(\EDo)},
  \end{equation*}
  where the constant in the big $\mathcal{O}$ depends only on $k$ and $T$.
\end{theorem}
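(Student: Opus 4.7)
The plan is to exploit the variational characterization of $f_N$: since $f_N$ is by definition the $L^2(-1,1)$-best approximation from the subspace $V_n := \operatorname{span}\{e^{i\pi j x/\T}\}_{j=-n}^{n}$, any competitor $g \in V_n$ yields
\begin{equation*}
 \|f-f_N\|_{L^2(-1,1)} \le \|f-g\|_{L^2(-1,1)}.
\end{equation*}
It therefore suffices to produce a single $g \in V_n$ with $\|f-g\|_{L^2(-1,1)} = \mathcal{O}(N^{-k})\|f\|_{H^k(-1,1)}$. The natural candidate is the truncated Fourier series on $[-\T,\T]$ of a carefully chosen $2\T$-periodic extension of $f$.

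First I would construct such an extension. Let $E : H^k(-1,1) \to H^k(\R)$ be a bounded Stein-type extension operator, and fix a cutoff $\chi \in C^\infty_c(\R)$ with $\chi \equiv 1$ on $[-1,1]$ and $\operatorname{supp}\chi \subset (-\T,\T)$. Set $\tilde f := \chi \cdot E f$, viewed as a $2\T$-periodic function on $\R$. Then $\tilde f|_{[-1,1]} = f$, and since $\tilde f$ vanishes in a neighbourhood of $\pm\T$ all derivatives up to order $k$ glue continuously across the period, so $\tilde f \in H^k_{\mathrm{per}}([-\T,\T])$. Boundedness of $E$ together with the Leibniz rule furnishes a constant $C=C(k,\T)$ with $\|\tilde f\|_{H^k(-\T,\T)} \le C\|f\|_{H^k(-1,1)}$.

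Second, I would truncate the Fourier series of $\tilde f$ on the torus. Writing $\tilde f(x) = \sum_{j\in\Z} c_j e^{i\pi j x/\T}$ and $\tilde f_n(x) := \sum_{|j|\le n} c_j e^{i\pi j x/\T} \in V_n$, Parseval on $[-\T,\T]$ together with the Fourier characterization $\|D^k \tilde f\|_{L^2(-\T,\T)}^2 = 2\T\sum_j(\pi j/\T)^{2k}|c_j|^2$ gives the standard bound
\begin{equation*}
 \|\tilde f - \tilde f_n\|_{L^2(-\T,\T)}^2 = 2\T\sum_{|j|>n}|c_j|^2 \le \Bigl(\tfrac{\T}{\pi n}\Bigr)^{2k}\|D^k\tilde f\|_{L^2(-\T,\T)}^2 \le \Bigl(\tfrac{\T}{\pi n}\Bigr)^{2k}\|\tilde f\|_{H^k(-\T,\T)}^2.
\end{equation*}
Chaining this with the best-approximation inequality and the trivial $\|\cdot\|_{L^2(-1,1)} \le \|\cdot\|_{L^2(-\T,\T)}$ yields
\begin{equation*}
 \|f-f_N\|_{L^2(-1,1)} \le \|f-\tilde f_n\|_{L^2(-1,1)} \le \|\tilde f - \tilde f_n\|_{L^2(-\T,\T)} \le C'(k,\T)\,N^{-k}\,\|f\|_{H^k(-1,1)},
\end{equation*}
using $N = 2n+1$. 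The only nontrivial ingredient is the existence of a Sobolev-preserving extension $E$ in step one; after that, everything reduces to Parseval on the torus and the variational definition of $f_N$, so I expect no real difficulty beyond tracking the $\T$- and $k$-dependence of the constants.
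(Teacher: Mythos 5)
Your argument is correct, and it is essentially the standard proof of this result: the paper itself does not prove Theorem \ref{thm:lsproj} but cites Adcock--Huybrechs, whose argument likewise reduces to exhibiting a competitor in $\Hspace_N$ obtained from a Sobolev-bounded periodic extension of $f$ to $[-T,T]$ and invoking the spectral decay of its Fourier coefficients (compare the competitor device already used in the paper's Proposition \ref{prop:L2convergence}). All steps — the variational inequality, the cutoff-times-extension construction giving $\tilde f \in H^k_{\mathrm{per}}([-T,T])$ with controlled norm, and the Parseval tail estimate — are sound, so nothing further is needed.
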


\begin{corollary}\label{cor:L2super}
  If $f$ is smooth then $f_N \to f$ superalgebraically in the $L^2(-1,1)$ norm.
\end{corollary}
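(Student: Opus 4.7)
The plan is to deduce Corollary \ref{cor:L2super} directly from Theorem \ref{thm:lsproj} by letting $k$ range over all nonnegative integers. There is essentially no obstacle: the content of ``superalgebraic convergence'' is precisely that $\|f - f_N\|_{L^2(-1,1)} = \mathcal{O}(N^{-k})$ for every $k \in \mathbb{N}$, and this is what the theorem yields once we know $f \in H^k(-1,1)$ for every $k$.

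First I would observe that if $f$ is smooth (that is, $f \in C^\infty([-1,1])$), then every derivative $f^{(j)}$ is continuous on the compact interval $[-1,1]$ and hence bounded, so in particular $f^{(j)} \in L^2(-1,1)$ for all $j \geq 0$. Consequently $f \in H^k(-1,1)$ with finite norm $\|f\|_{H^k(-1,1)}$ for every $k \in \mathbb{N}$.

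Next, I would fix an arbitrary $k \in \mathbb{N}$ and apply Theorem \ref{thm:lsproj} to obtain a constant $C_{k,T} > 0$ such that
\begin{equation*}
\|f - f_N\|_{L^2(-1,1)} \leq C_{k,T} \, N^{-k} \, \|f\|_{H^k(-1,1)}.
\end{equation*}
The right-hand side is finite by the previous paragraph, so $\|f - f_N\|_{L^2(-1,1)} = \mathcal{O}(N^{-k})$. Since $k$ was arbitrary, the decay is faster than any algebraic rate, which is the definition of superalgebraic convergence. This completes the argument.
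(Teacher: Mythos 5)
Your argument is correct and is exactly the one the paper intends: the corollary is stated as an immediate consequence of Theorem \ref{thm:lsproj}, obtained by noting that a smooth function lies in $H^k(-1,1)$ for every $k$ and applying the theorem for each $k$. No further comment is needed.
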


\subsection{Subalgebraic convergence}
This elementary result says that Fourier extensions converge in the $L^2$ norm for $L^2$ functions.
\begin{proposition}\label{prop:L2convergence}
  If $f \in L^2(-1,1)$, then
  \begin{equation*}
  \|f - f_N\|_{L^2(-1,1)} \to 0 \text{ as } N \to \infty.
  \end{equation*}
  \begin{proof}
    Let $g \in L^2(-T,T)$ be the function that is equal to $f$ inside $[-1,1]$ and zero in the complement. Let $g(x) = \sum_{k=-\infty}^\infty c_k e^{\frac{i\pi}{\T}kx}$ be its Fourier series, and for all odd integers $N = 2n+1$, define $t_N(x) =  \sum_{k=-n}^n c_k e^{\frac{i\pi}{\T}kx}$. Then following the definitions of $f_N$, $g$ and $t_N$, we have $\|f - f_N \|_{L^2(-1,1)} \leq \|f - t_N \|_{L^2(-1,1)} = \|g - t_N \|_{L^2(-T,T)} \to 0$ as $N\to\infty$.
  \end{proof}
\end{proposition}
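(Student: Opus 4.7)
The approach is to exploit the defining property of $f_N$: it is the \emph{best} approximation to $f$ in $L^2(-1,1)$ from the span $V_N = \operatorname{span}\{e^{i\pi k x/\T} : |k| \le n\}$. To show $\|f - f_N\|_{L^2(-1,1)} \to 0$, it therefore suffices to exhibit \emph{some} sequence $t_N \in V_N$ with $\|f - t_N\|_{L^2(-1,1)} \to 0$; the inequality $\|f - f_N\|_{L^2(-1,1)} \le \|f - t_N\|_{L^2(-1,1)}$ will then do the rest.

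The natural candidate is the classical truncated Fourier series, but on the larger interval $[-\T,\T]$ where the exponentials $\{(2\T)^{-1/2} e^{i\pi k x/\T}\}_{k \in \Z}$ genuinely form an orthonormal basis. I would first extend $f$ to a function $g \in L^2(-\T,\T)$ by setting $g = 0$ on $[-\T,\T]\setminus [-1,1]$; this extension is in $L^2$ because $\|g\|_{L^2(-\T,\T)} = \|f\|_{L^2(-1,1)}$. Then I would let $t_N$ be the symmetric partial sum of the Fourier series of $g$ on $[-\T,\T]$, which lies in $V_N$ by construction.

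By standard Hilbert space theory (convergence of Fourier series in $L^2$ on a periodic interval, where completeness of the exponential basis is classical), $\|g - t_N\|_{L^2(-\T,\T)} \to 0$. Restricting to $[-1,1]$ and using $g|_{[-1,1]} = f$ gives the chain
\begin{equation*}
\|f - f_N\|_{L^2(-1,1)} \le \|f - t_N\|_{L^2(-1,1)} \le \|g - t_N\|_{L^2(-\T,\T)} \to 0,
\end{equation*}
which closes the argument. There is no substantive obstacle here: the only real inputs are the best-approximation inequality (by definition of $f_N$), a trivial zero-extension, and the classical $L^2$-convergence of Fourier series on the interval where the basis is actually orthonormal. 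In particular, no regularity beyond $L^2$ is needed.
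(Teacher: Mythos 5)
Your proposal is correct and follows essentially the same route as the paper: extend $f$ by zero to $g\in L^2(-T,T)$, take the truncated Fourier series $t_N$ of $g$ as a competitor in $\Hspace_N$, and combine the best-approximation property of $f_N$ with classical $L^2$ convergence of Fourier series on $[-T,T]$. (Your inequality $\|f-t_N\|_{L^2(-1,1)}\le\|g-t_N\|_{L^2(-T,T)}$ is in fact the correct relation; the paper writes it as an equality, which is a harmless slip since $t_N$ need not vanish outside $[-1,1]$.)
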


\section{Pointwise and uniform convergence}\label{sec:pointwiseconvergence}

We prove pointwise convergence rates for functions in various H\"older spaces. For $k = 0,1,2,\ldots$ and $\alpha \in [0,1]$, the H\"older space $C^{k,\alpha}([-1,1])$ is the space,
\begin{equation*}
C^{k,\alpha}([-1,1]) : = \left\{f \in C^k([-1,1]) : |f^{(k)}|_{C^{\alpha}([-1,1])} < \infty \right\},
\end{equation*}
where
\begin{equation*}
|g|_{C^\alpha([-1,1])} := \sup_{x,y \in [-1,1]} \frac{|g(x)-g(y)|}{|x-y|^\alpha}.
\end{equation*}
It is a Banach space when endowed with the norm $\|f\|_{C^{k,\alpha}([-1,1])} = \|f\|_{C^k([-1,1])} + |f^{(k)}|_{C^\alpha([-1,1])}$ \cite{evans2010partial}. For all $\alpha \in [0,1]$, we have $C^\alpha([-1,1]) := C^{0,\alpha}([-1,1])$.

\subsection{Exponential convergence}

The pointwise convergence result for analytic functions is the same as Theorem \ref{thm:analyticexactsobolev}. In fact, Theorem \ref{thm:analyticexactsobolev} is a corollary of the following theorem.

\begin{theorem}[{Huybrechs \cite[Theorem 3.14]{huybrechs2010fourier}, Adcock--Huybrechs \cite[Theorem 2.11]{adcock2014numerical}, \cite[Theorem 2.3]{adcock2014resolution}}]\label{thm:analyticuniform}
  If $f$ is analytic inside of the mapped Bernstein ellipse, $\mathcal{D(\rho^\star)}$ (see equation \eqref{eqn:Drho}) and continuous on $\mathcal{D(\rho^\star)}$ itself, then
  \begin{equation*}
  \|f-f_N\|_{L^\infty(\EDo)} = \mathcal{O}(\rho^{-n}) \|f\|_{L^\infty(\mathcal{D}(\rho))},
  \end{equation*}
  where $\rho < \min\left\{\rho^\star, \cot^2\left(\frac{\pi}{4\T}\right)\right\}$ and $N= 2n+1$. The constant in the big $\mathcal{O}$ depends only on $T$.
\end{theorem}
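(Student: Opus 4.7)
The plan is to deduce the $L^\infty$ estimate from a Jackson-type best-approximation bound for analytic functions combined with a Lebesgue-function estimate for the Fourier extension projection, both developed later in this paper. Let $\mathcal{G}_n := \operatorname{span}\{e^{i\pi kx/\T}:|k|\le n\}$, so that $f_N$ is the $L^2(\EDo)$-orthogonal projection of $f$ onto $\mathcal{G}_n$, and let $E_n(f)_\infty := \inf_{g\in\mathcal{G}_n}\|f-g\|_{L^\infty(\EDo)}$ denote the best uniform error. The first step is a Bernstein-type estimate: for any $\tilde\rho<\min\{\rho^\star,\cot^2(\pi/(4\T))\}$,
\begin{equation*}
E_n(f)_\infty = \mathcal{O}(\tilde\rho^{-n})\|f\|_{L^\infty(\mathcal{D}(\tilde\rho))}.
\end{equation*}
I would prove this via the change of variables $t=m(x)$, which maps $\mathcal{D}(\tilde\rho)$ onto the standard Bernstein ellipse $\mathcal{B}(\tilde\rho)$ in the $t$-plane. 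Decomposing $f=f_e+f_o$ into even and odd parts in $x$, the even part $f_e$ descends to a function of $t$ that is analytic on $\mathcal{B}(\tilde\rho)$, so Bernstein's classical theorem on polynomial approximation of analytic functions supplies a polynomial of degree $\le n$ in $t$ that approximates $f_e$ with error $\mathcal{O}(\tilde\rho^{-n})$. The odd part, after dividing by $\sin(\pi x/\T)$ (whose only zero in $\EDc$ is the removable one at $x=0$), likewise becomes even in $x$, hence a function of $t$ analytic on $\mathcal{B}(\tilde\rho)$, and is approximable by a polynomial of degree $\le n-1$ in $t$ at the same rate. Both classes of approximants lie in $\mathcal{G}_n|_{\EDc}$, so the best approximation in $\mathcal{G}_n$ inherits the bound.

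For the second step, the reproducing kernel $K_N$ of $\mathcal{G}_n$ with respect to the $L^2(\EDo)$ inner product satisfies $f_N(x)=\int_{-1}^1 K_N(x,y)f(y)\,\d y$ and $g(x)=\int_{-1}^1 K_N(x,y)g(y)\,\d y$ for every $g\in\mathcal{G}_n$. Hence
\begin{equation*}
|f(x)-f_N(x)| \le |f(x)-g(x)| + \int_{-1}^1|K_N(x,y)|\,|g(y)-f(y)|\,\d y \le (1+\lambda_N(x))\|f-g\|_{L^\infty(\EDo)},
\end{equation*}
where $\lambda_N(x) = \int_{-1}^1 |K_N(x,y)|\,\d y$ is the Lebesgue function. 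Infimizing over $g$ and taking a supremum in $x$,
\begin{equation*}
\|f-f_N\|_{L^\infty(\EDo)} \le (1+\Lambda_N)\,E_n(f)_\infty, \quad\text{where } \Lambda_N:=\sup_{x\in\EDc}\lambda_N(x).
\end{equation*}
The global bound $\Lambda_N=\mathcal{O}(\sqrt N)$ furnished by Theorem \ref{thm:lebesguefunction}, combined with the first step, yields $\|f-f_N\|_{L^\infty(\EDo)} = \mathcal{O}(\sqrt N\,\tilde\rho^{-n})\|f\|_{L^\infty(\mathcal{D}(\tilde\rho))}$. Selecting $\tilde\rho$ slightly larger than the given $\rho$ (both below the threshold) absorbs the $\sqrt N$ factor into the exponential and produces the stated $\mathcal{O}(\rho^{-n})$ rate.

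The hard part is the Bernstein-type estimate. Because $m$ is even in $x$ it is not bijective on $\EDc$, forcing the even/odd decomposition and yielding two distinct polynomial approximation problems in $t$. The upper bound $\rho<\cot^2(\pi/(4\T))$ is not artificial: beyond this threshold $\mathcal{D}(\rho)=m^{-1}(\mathcal{B}(\rho))$ ceases to be a simple closed curve and additional zeros of $\sin(\pi x/\T)$ can enter, obstructing both the analytic continuation needed for the odd part and the Cauchy-integral estimates underlying the Bernstein bound.
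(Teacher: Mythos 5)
This theorem is one the paper quotes from the literature (Huybrechs; Adcock--Huybrechs) rather than proves, so there is no in-paper proof to match; your argument is a genuinely different, self-contained route, and in outline it is correct. The cited proofs work by expanding $f$ in the half-range Chebyshev (modified Jacobi) polynomial bases that arise after the substitution $t=m(x)$, showing the coefficients decay like $\rho^{-k}$ via analyticity in $\mathcal{D}(\rho)$, and summing; you instead combine a Jackson-type bound $E(f;\Hspace_N)=\mathcal{O}(\tilde\rho^{-n})\|f\|_{L^\infty(\mathcal{D}(\tilde\rho))}$ with Lebesgue's Lemma and the $\mathcal{O}(N^{1/2})$ Lebesgue constant of Theorem \ref{thm:lebesguefunction}, absorbing the $\sqrt{N}$ into the exponential by shrinking $\rho$ slightly. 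This is exactly the template the paper uses for its algebraic-rate results in Section \ref{sec:pointwiseconvergence}, and there is no circularity since Theorem \ref{thm:lebesguefunction} does not rely on the present statement. Your first step (even/odd splitting, with the odd part divided by $\sin(\pi x/\T)$) is also the decomposition underlying the original references, and your identification of $\cot^2(\pi/(4\T))$ as the value at which $\mathcal{D}(\rho)$ reaches the zeros $x=\pm \T$ is correct. Two details deserve care: (i) $\mathcal{D}(\rho)=m^{-1}(\mathcal{B}(\rho))$ is literally a $2\T$-periodic union of components, so "descends to a function of $t$" requires restricting to the component containing $[-1,1]$, on which the only identification made by $m$ is $x\leftrightarrow -x$ (this is precisely what fails at the threshold); and (ii) bounding $\|f_o/\sin(\pi x/\T)\|_{L^\infty(\mathcal{D}(\tilde\rho))}$ needs the removable-singularity/Cauchy-estimate argument you allude to, whose constant degenerates as $\tilde\rho$ approaches the threshold.

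One point where your conclusion is formally weaker than the statement: after choosing $\tilde\rho>\rho$ to absorb $\sqrt{N}$, your bound reads $\mathcal{O}(\rho^{-n})\|f\|_{L^\infty(\mathcal{D}(\tilde\rho))}$ with $\mathcal{D}(\tilde\rho)\supset\mathcal{D}(\rho)$, and the implied constant depends on $\rho$ and $\tilde\rho$ (and on their distance to $1$ and to the threshold), not only on $\T$. The sup over the larger region can exceed $\|f\|_{L^\infty(\mathcal{D}(\rho))}$ by an arbitrary factor, so you recover the theorem only in the slightly reparametrized form "for every admissible $\rho$ there is exponential convergence at rate $\rho'$ for each $\rho'<\rho$, measured by $\|f\|_{L^\infty(\mathcal{D}(\rho))}$". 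This is the standard and essentially harmless looseness in such statements (the constant in the quoted theorem cannot in fact be uniform in $\rho$ near $1$ or near the threshold either), but it is worth stating explicitly rather than claiming the exact displayed form.
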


\subsection{Algebraic convergence}

Pointwise convergence for H\"older continuous functions is as follows.

\begin{theorem}\label{thm:Linftyalgebraic}

  If $f\in C^{k,\alpha}([-1,1])$ where $k \geq 0$ and $\alpha \in [0,1]$, then for all $[a,b] \subset (-1,1)$,
    \begin{equation*}
    \|f - f_N\|_{L^\infty(a,b)} = \mathcal{O}(N^{- \alpha - k}\log N) |f^{(k)}|_{C^{\alpha}([-1,1])}.
    \end{equation*}
  The constant in the big $\mathcal{O}$ depends on $a$, $b$, $k$, $\alpha$, and $T$. Over the whole interval, $[-1,1]$, we have
  \begin{equation*}
  \|f - f_N\|_{L^\infty(-1,1)} = \mathcal{O}(N^{\frac12 - \alpha - k}) |f^{(k)}|_{C^{\alpha}([-1,1])}.
  \end{equation*}
  The constant in the big $\mathcal{O}$ depends on $k$, $\alpha$, and $T$.
\end{theorem}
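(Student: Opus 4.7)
The plan is to follow the classical Lebesgue-constant / Jackson theorem recipe, leveraging the two deep ingredients advertised in the introduction (the Lebesgue function asymptotics from Section~\ref{sec:prolatekernel} and the Jackson-type theorem from Section~\ref{sec:bestuniform}).

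First I would set up the standard reduction to best approximation. Let $P_N$ denote the $L^2(-1,1)$-orthogonal projection onto the Fourier extension space $\mathrm{span}\{e^{\frac{i\pi}{T}kx} : |k|\le n\}$, so that $f_N = P_N f$. Let $p_N^\star$ be a best uniform approximant to $f$ from this same space. Because $P_N$ fixes elements of its range, $P_N p_N^\star = p_N^\star$, and we obtain the decomposition
\begin{equation*}
f(x) - f_N(x) = \bigl(f(x)-p_N^\star(x)\bigr) - P_N\bigl(f - p_N^\star\bigr)(x).
\end{equation*}
Writing $P_N$ as an integral operator against its reproducing kernel $K_N(x,y)$ (the prolate kernel studied in Section~\ref{sec:prolatekernel}), the operator norm from $L^\infty(-1,1)$ to a pointwise evaluation at $x$ is exactly the Lebesgue function
\begin{equation*}
\Lambda_N(x) := \int_{-1}^{1} |K_N(x,y)|\,\d y,
\end{equation*}
so that $|P_N g(x)| \le \Lambda_N(x)\,\|g\|_{L^\infty(-1,1)}$ for every $g\in C([-1,1])$. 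This yields the master inequality
\begin{equation*}
|f(x) - f_N(x)| \le \bigl(1 + \Lambda_N(x)\bigr)\,\|f - p_N^\star\|_{L^\infty(-1,1)}.
\end{equation*}

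Next I would insert the two inputs stated as results of the later sections. From Theorem~\ref{thm:lebesguefunction} (Section~\ref{sec:prolatekernel}) one has $\sup_{x\in[a,b]}\Lambda_N(x) = \mathcal{O}(\log N)$ for every $[a,b]\subset(-1,1)$, and $\sup_{x\in[-1,1]}\Lambda_N(x)=\mathcal{O}(N^{1/2})$, with implicit constants depending only on $a,b,T$ (respectively $T$). From the Jackson-type theorem for Fourier extensions proved in Section~\ref{sec:bestuniform} applied to $f\in C^{k,\alpha}([-1,1])$,
\begin{equation*}
\|f - p_N^\star\|_{L^\infty(-1,1)} = \mathcal{O}\bigl(N^{-k-\alpha}\bigr)\,|f^{(k)}|_{C^\alpha([-1,1])},
\end{equation*}
with implicit constant depending on $k,\alpha,T$. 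Multiplying these two bounds through the master inequality gives the stated interior estimate $\mathcal{O}(N^{-k-\alpha}\log N)$ on any $[a,b]\subset(-1,1)$, and the global estimate $\mathcal{O}(N^{1/2-k-\alpha})$ on $[-1,1]$.

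Since the two nontrivial ingredients are quoted from their own self-contained sections, the main obstacle in the present theorem is purely bookkeeping: verifying that the constants in the Jackson-type and Lebesgue estimates depend on the advertised parameters only, and confirming that $p_N^\star$ (a best $L^\infty$ approximant from a finite-dimensional subspace of $C([-1,1])$) exists and lies in $L^2(-1,1)\cap C([-1,1])$ so that $P_N p_N^\star = p_N^\star$ is legitimate. Both are standard: existence of $p_N^\star$ follows from finite-dimensionality of the approximation space, and the kernel representation of $P_N$ on continuous functions is immediate once $K_N$ is identified in Section~\ref{sec:prolatekernel}. The genuinely hard analytic work, namely the Riemann--Hilbert asymptotics for Legendre polynomials on a circular arc underlying the Lebesgue function bound and the Jackson-type estimate built on Bernstein-type inverse theorems, is deferred to the later sections, so the proof of Theorem~\ref{thm:Linftyalgebraic} itself should be only a few lines long.
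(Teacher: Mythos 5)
Your proposal is correct and follows essentially the same route as the paper: Lebesgue's Lemma combined with the Lebesgue function bounds of Theorem \ref{thm:lebesguefunction} (packaged in the paper as Lemma \ref{lem:superlemma}) and the Jackson-type estimate of Theorem \ref{thm:Jacksonmain} together with Lemma \ref{lem:holdermodulus} to pass from the modulus of continuity to the H\"older seminorm. The only difference is cosmetic: you re-derive Lebesgue's Lemma via the decomposition $f - f_N = (f-p_N^\star) - P_N(f-p_N^\star)$, whereas the paper simply cites it.
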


We lose a half order of algebraic convergence at the endpoints, something that we could not possibly see in classical Fourier series because a periodic interval has no endpoints.

\begin{corollary}
  If $f$ is smooth then $f_N \to f$ superalgebraically in $L^\infty(-1,1)$.
\end{corollary}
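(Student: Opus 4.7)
The plan is to deduce the corollary as an immediate consequence of the global estimate in Theorem \ref{thm:Linftyalgebraic}. First I would observe that if $f \in C^\infty([-1,1])$, then for every nonnegative integer $k$ the derivative $f^{(k)}$ is continuous on the compact interval $[-1,1]$, and $f^{(k+1)}$ is also continuous there, so $f^{(k)}$ is Lipschitz. In particular $|f^{(k)}|_{C^1([-1,1])} < \infty$, and hence $f \in C^{k,1}([-1,1])$ for every $k \geq 0$.

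Next I would apply the global bound of Theorem \ref{thm:Linftyalgebraic} with $\alpha = 1$ and arbitrary $k$ to obtain
\[
\|f - f_N\|_{L^\infty(-1,1)} = \mathcal{O}\bigl(N^{-\tfrac12 - k}\bigr)\, |f^{(k)}|_{C^1([-1,1])},
\]
where the implicit constant depends on $k$ and $T$ but not on $N$. Given any desired polynomial decay rate $N^{-p}$ with $p > 0$, choosing $k$ so that $k \geq p - \tfrac12$ produces a bound of order $N^{-p}$ or better. Since $p$ was arbitrary, this yields superalgebraic convergence in $L^\infty(-1,1)$, which is the claim.

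There is essentially no technical obstacle: the corollary is a routine consequence of Theorem \ref{thm:Linftyalgebraic}. The only subtlety worth noting is that the constant $C_p$ in the resulting bound $\|f - f_N\|_{L^\infty(-1,1)} \leq C_p N^{-p}$ involves $|f^{(k)}|_{C^1([-1,1])}$ for increasingly large $k$, so it may grow rapidly with $p$. This is precisely what the definition of superalgebraic (as opposed to exponential) convergence allows; the stronger exponential rate, available when $f$ is additionally analytic on a mapped Bernstein ellipse, is already provided by Theorem \ref{thm:analyticuniform}.
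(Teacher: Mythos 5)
Your proposal is correct and is exactly the deduction the paper intends: the corollary is stated without proof immediately after Theorem \ref{thm:Linftyalgebraic}, and the intended argument is precisely that a smooth $f$ lies in $C^{k,1}([-1,1])$ for every $k$, so the global bound $\mathcal{O}(N^{\frac12-k-1})$ holds for all $k$, giving superalgebraic convergence. Your remark about the $k$-dependent constants is a fair and accurate clarification of why this is superalgebraic rather than exponential.
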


\subsection{Subalgebraic convergence}

The loss of a half order of algebraic convergence at the endpoints predicted by Theorem \ref{thm:Linftyalgebraic} means that we require at least H\"older continuity with order greater than a half in order to guarantee uniform convergence.

\begin{theorem}\label{thm:subalgebraic1}
  If $f \in C^\alpha([-1,1])$, where $\alpha > \frac12$, then
  \begin{equation*}
  \|f - f_N\|_{L^\infty(-1,1)} \to 0 \text{ as } N \to \infty.
  \end{equation*}
\end{theorem}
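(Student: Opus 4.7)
The plan is to deduce this statement directly from Theorem \ref{thm:Linftyalgebraic}. Specifying $k = 0$ in that theorem gives
\[
\|f-f_N\|_{L^\infty(-1,1)} \;=\; \mathcal{O}\!\bigl(N^{\tfrac12-\alpha}\bigr)\,|f|_{C^\alpha([-1,1])},
\]
with a constant depending only on $\alpha$ and $T$. Since $\alpha > \tfrac12$ by hypothesis, the exponent $\tfrac12 - \alpha$ is strictly negative, so $N^{\tfrac12-\alpha}\to 0$ as $N\to\infty$, and the seminorm $|f|_{C^\alpha([-1,1])}$ is finite because $f\in C^\alpha([-1,1])$. The conclusion $\|f-f_N\|_{L^\infty(-1,1)}\to 0$ follows immediately.

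It is worth unpacking which ingredients are really doing the work. Theorem \ref{thm:Linftyalgebraic} combines two pieces proved elsewhere in the paper: the global bound $\Lambda_N(x) = \mathcal{O}(N^{1/2})$ on the Lebesgue function associated with Fourier extensions (Theorem \ref{thm:lebesguefunction}, obtained from the Riemann--Hilbert asymptotics for Legendre polynomials on a circular arc), and a Jackson-type theorem (Section \ref{sec:bestuniform}) providing a best uniform approximation error of order $N^{-\alpha}|f|_{C^\alpha}$. The product $N^{1/2}\cdot N^{-\alpha} = N^{1/2-\alpha}$ is exactly what yields the stated threshold.

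There is no additional obstacle to overcome in this theorem itself beyond packaging the $k=0$ case of Theorem \ref{thm:Linftyalgebraic}. The threshold $\alpha > \tfrac12$ is sharp within this approach: the $N^{1/2}$ growth of the Lebesgue function near the endpoints mirrors that of Legendre polynomial expansions, so any weaker regularity assumption cannot be absorbed by the present pairing of Lebesgue-function and Jackson-type estimates, and would require a sharper analysis (e.g.\ using a weighted modulus of continuity, as alluded to in Section \ref{sec:bestuniform}).
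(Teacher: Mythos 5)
Your proposal is correct and is essentially identical to the paper's own proof, which likewise just invokes Theorem \ref{thm:Linftyalgebraic} with $k=0$ and observes that the exponent $\tfrac12-\alpha$ is negative. The additional commentary on which ingredients (Lebesgue function bound and Jackson-type theorem) drive the threshold is accurate but not needed for the argument itself.
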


In order to guarantee local, pointwise convergence, there is a weak local continuity condition which can be employed as follows. A function $f$ is \emph{uniformly Dini--Lipschitz} in $[a,b]$ if \cite{zygmund2002trigonometric},
\begin{equation}\label{eqn:DiniLipschitz}
\lim_{\delta \searrow 0}\sup_{\substack{x,y \in [a,b] \\ |x-y|<\delta}} \left|(f(x)- f(y)) \log \delta \right| = 0.
\end{equation}
This is a very weak condition, weaker than the H\"older condition for any $\alpha > 0$, but it is sufficient for convergence of Fourier extensions in the interior of $[-1,1]$.

\begin{theorem}\label{thm:DiniLipschitz}
  If $f \in L^2(-1,1)$ is uniformly Dini--Lipschitz in $[a,b] \subseteq [-1,1]$, then 
  \begin{equation*}
  \|f - f_N\|_{L^\infty(c,d)} \to 0 \text{ as } N \to \infty,
  \end{equation*}
  for all $[c,d] \subset (a,b)$.
 \end{theorem}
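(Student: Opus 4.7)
The plan is to reduce, via a smooth cutoff decomposition, to two simpler statements handled by results established elsewhere in the paper: Lebesgue's inequality combined with the Jackson-type theorem of Section \ref{sec:bestuniform} for the regular piece, and the Freud-type localization theorem of Section \ref{sec:localization} for the remaining $L^2$ tail that vanishes near $[c,d]$.

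Pick nested subintervals $[c,d] \subset (c',d') \subset [c',d'] \subset (a,b)$ and a $C^\infty$ cutoff $\chi$ on $[-1,1]$ with $\chi \equiv 1$ on $[c',d']$ and support contained in $(a,b)$. Decompose $f = g + h$ with $g := \chi f$ and $h := (1-\chi)f$. Because the support of $\chi$ lies inside $[a,b]$, where $f$ is uniformly Dini--Lipschitz and hence continuous (so bounded), the product $g$ is uniformly Dini--Lipschitz on all of $[-1,1]$, with modulus of continuity controlled by $\omega_g(\delta) \leq \|\chi\|_{L^\infty}\,\omega_f(\delta) + \|f\|_{L^\infty[a,b]}\|\chi'\|_{L^\infty}\delta$. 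The remainder $h \in L^2(-1,1)$ vanishes on the open set $(c',d')$ that contains $[c,d]$. By linearity of the Fourier extension (the $L^2(-1,1)$-orthogonal projection onto the Fourier extension space), $f_N = g_N + h_N$, so
\[
f - f_N = (g - g_N) - h_N \text{ on } [c,d],
\]
and it suffices to prove that each term vanishes uniformly on $[c,d]$ as $N \to \infty$.

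For the regular piece $g$, I apply the pointwise Lebesgue inequality $|g(x) - g_N(x)| \leq (1 + \Lambda_N(x))\,E_N^{\infty}(g)$, where $E_N^\infty(g)$ denotes the best uniform approximation error for $g$ by elements of the Fourier extension space and $\Lambda_N(x) = \int_{-1}^1 |K_N(x,y)|\,\mathrm{d}y$ is the Lebesgue function. The Jackson-type theorem from Section \ref{sec:bestuniform} supplies $E_N^\infty(g) = \mathcal{O}(\omega_g(1/N))$, while Theorem \ref{thm:lebesguefunction} supplies $\Lambda_N(x) = \mathcal{O}(\log N)$ uniformly for $x \in [c,d] \subset (-1,1)$. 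Combining gives $\|g - g_N\|_{L^\infty(c,d)} = \mathcal{O}\bigl(\log N \cdot \omega_g(1/N)\bigr)$, which vanishes precisely by the uniform Dini--Lipschitz condition on $g$. For the remainder $h$, the Freud-type localization theorem of Section \ref{sec:localization} applies directly: since $h \in L^2(-1,1)$ vanishes on the open neighborhood $(c',d')$ of $[c,d]$, it yields $\|h_N\|_{L^\infty(c,d)} \to 0$.

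The cutoff decomposition itself is the routine glue; the substance is packaged in the three ingredients above. The main obstacle is the localization theorem of Section \ref{sec:localization}, which requires a careful analysis of the off-diagonal decay of the Fourier extension kernel $K_N(x,y)$ when $y$ is bounded away from $x$, extracted from the Riemann--Hilbert asymptotics of the Legendre polynomials on a circular arc. The Jackson-type theorem and the logarithmic Lebesgue function bound on compact subintervals of $(-1,1)$, while nontrivial, are by comparison routine consequences of the same explicit kernel formula and asymptotics.
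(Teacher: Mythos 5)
Your proof is correct and follows essentially the same strategy as the paper's: split $f$ into a piece that is uniformly Dini--Lipschitz on all of $[-1,1]$ (handled by Lebesgue's inequality, the Jackson-type theorem, and the $\mathcal{O}(\log N)$ interior Lebesgue function bound) plus an $L^2$ piece vanishing near $[c,d]$ (handled by the localization theorem). The only difference is cosmetic: the paper builds the regular piece by clamping $f$ to its endpoint values $f(a)$, $f(b)$ outside $[a,b]$ and applies localization on the original interval, whereas you use a smooth multiplicative cutoff and apply localization on the intermediate interval $(c',d')$ — both are valid.
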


\begin{remark}
  This theorem is stronger than it might appear at first. It says that even if a function is in $L^2(-1,1)$, and can have for example jump discontinuities, we will still have pointwise convergence in regions where $f$ is Dini--Lipschitz. However, the localization theorem (Theorem \ref{thm:localization}) which we use to prove this result, does not give any indication of the \emph{rate} of convergence.
\end{remark}

\subsection{Proofs of the results of this section}

For each odd positive integer $N = 2n+1$, let $P_N$ be the orthogonal projection from $L^2(-1,1)$ onto the subspace $\Hspace_N$,
\begin{equation*}
\Hspace_N = \mathrm{span}\{e^{\frac{i\pi}{\T}kx}\}_{k=-n}^n.
\end{equation*}
Then $f_N = P_N(f)$, since $f_N$ minimizes the $L^2(-1,1)$ distance between $f$ and $\Hspace_N$. Let $\{ e_k \}_{k = 1}^N$ be \emph{any} orthonormal basis for $\Hspace_N \subset L^2(-1,1)$. Then the kernel, 
\begin{equation*}
K_N(x,y) = \sum_{k=1}^N e_k(x) \overline{e_k(y)},
\end{equation*}
satisfies
\begin{equation*}
\PN f(x) = \int_{-1}^1 K_N(x,y) f(y) \d y,
\end{equation*}
for all $f \in L^2(-1,1)$. The \emph{Lebesgue function} for the projection $\PN$ at a point $x\in[-1,1]$ is the $L^1$ norm of the kernel at $x$,
\begin{equation*}
\Lambda(x;\PN) = \int_{-1}^{1} \left|K_N(x,y) \right| \d y.
\end{equation*}
The \emph{best approximation error functional} on $\Hspace_N$ is defined for all $f\in C([-1,1])$ by
\begin{equation}\label{eqn:bestapproximation}
E(f;\Hspace_N) = \inf_{r_N \in \Hspace_N} \|f - r_N\|_{L^\infty(-1,1)}.
\end{equation}

The importance of $\Lambda(x;\PN)$ and $E(f;\Hspace_N)$ are encapsulated in Lebesgue's Lemma, which states that for any ${f\in C([-1,1])}$,
\begin{equation}\label{eqn:LebesgueLemma}
|f(x) - \PN(f)(x)| \leq (1+\Lambda(x;\PN))E(f;\Hspace_N),
\end{equation} 
for all $x \in [-1,1]$ \cite[Ch.~2, Prop.~4.1]{devore1993constructive}, \cite[Thm.~2.5.2]{phillips2003interpolation}.

Now we can proceed to prove the pointwise convergence results stated above. The proofs depend on the content of Sections \ref{sec:prolatekernel}, \ref{sec:bestuniform} and \ref{sec:localization}, which consist of self-contained results.

\begin{lemma}\label{lem:superlemma}
  Let $f \in C([-1,1])$. Then for all closed subsets $[a,b] \subset (-1,1)$, we have
  \begin{equation*}
  \|f - \PN(f)\|_{L^\infty(a,b)}= \mathcal{O}(\log N) E\left(f;\Hspace_N\right),
  \end{equation*}
  where the constant in the big $\mathcal{O}$ depends on $a$, $b$ and $T$. Over the whole interval $[-1,1]$, we have
    \begin{equation*}
    \|f - \PN(f)\|_{L^\infty(-1,1)} = \mathcal{O}(N^{\frac12}) E\left(f;\Hspace_N\right),
    \end{equation*}
  where the constant in the big $\mathcal{O}$ depends only on $T$.
  \begin{proof}
    By Lebesgue's Lemma, given in equation \eqref{eqn:LebesgueLemma}, it suffices to show that $\sup_{x \in [a,b]}\Lambda(x;\PN) = \mathcal{O}(\log N)$, and $\sup_{x \in [-1,1]}\Lambda(x; \PN) = \mathcal{O}(N^{\frac12})$. This is proved in Theorem \ref{thm:lebesguefunction}.
  \end{proof}
\end{lemma}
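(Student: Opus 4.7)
The plan is to apply Lebesgue's Lemma \eqref{eqn:LebesgueLemma} pointwise and then take the supremum over the relevant interval, reducing everything to the stated bounds on the Lebesgue function $\Lambda(x;\PN)$ from Theorem \ref{thm:lebesguefunction}.

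First, I would recall that Lebesgue's Lemma applied to $\PN : C([-1,1]) \to \Hspace_N$ gives, for every $x \in [-1,1]$ and every $f \in C([-1,1])$,
\begin{equation*}
|f(x) - \PN(f)(x)| \leq (1 + \Lambda(x;\PN)) E(f;\Hspace_N).
\end{equation*}
Since the right-hand side depends on $x$ only through $\Lambda(x;\PN)$, taking the supremum over $x$ in any subset $S \subseteq [-1,1]$ yields
\begin{equation*}
\|f - \PN(f)\|_{L^\infty(S)} \leq \Bigl(1 + \sup_{x \in S}\Lambda(x;\PN)\Bigr) E(f;\Hspace_N).
\end{equation*}

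Next, I would invoke Theorem \ref{thm:lebesguefunction}, which provides exactly the two Lebesgue-function estimates needed. For $S = [a,b] \subset (-1,1)$, it gives a constant $C_{a,b,T}$ such that $\sup_{x\in[a,b]} \Lambda(x;\PN) \leq C_{a,b,T}\log N$ for all sufficiently large $N$; substituting this produces the interior bound $\mathcal{O}(\log N) \, E(f;\Hspace_N)$. For $S = [-1,1]$ it gives a constant $C_T$ with $\sup_{x \in [-1,1]} \Lambda(x;\PN) \leq C_T N^{1/2}$, yielding the global bound $\mathcal{O}(N^{1/2}) \, E(f;\Hspace_N)$. In each case the constant's dependence tracks exactly as claimed in the statement (on $a$, $b$, $T$ in the first case and on $T$ only in the second), because that is how the constants enter Theorem \ref{thm:lebesguefunction}.

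There is no genuine obstacle at this stage of the argument: all the analytic difficulty is offloaded to Theorem \ref{thm:lebesguefunction}, whose proof requires the explicit formula for the kernel $K_N(x,y)$ in terms of the Christoffel--Darboux kernel of the Legendre polynomials on a circular arc (Lemma \ref{lem:prolatekernelformula}) together with the Riemann--Hilbert asymptotics of those polynomials. Once that theorem is in hand, Lemma \ref{lem:superlemma} follows by the two-line application of Lebesgue's Lemma described above.
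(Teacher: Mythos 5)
Your proposal is correct and follows exactly the paper's own argument: apply Lebesgue's Lemma \eqref{eqn:LebesgueLemma} pointwise, take the supremum over the relevant set, and invoke the two Lebesgue-function bounds of Theorem \ref{thm:lebesguefunction}. Nothing further is needed.
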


\begin{proof}[Proof of Theorem \ref{thm:Linftyalgebraic}]
  By Lemma \ref{lem:superlemma}, it suffices to show that for $f \in C^{k,\alpha}([-1,1])$, we have $E(f;\Hspace_N) = \mathcal{O}\left(N^{-k-\alpha}\right) |f|_{C^\alpha([-1,1])}$. This follows from Lemma \ref{lem:holdermodulus} and Theorem \ref{thm:Jacksonmain}.
\end{proof}

\begin{proof}[Proof of Theorem \ref{thm:subalgebraic1}] 
  This follows from Theorem \ref{thm:Linftyalgebraic} with $k = 0$, because $N^{\frac12 - \alpha} \log N \to 0$ as $N\to \infty$ for all $\alpha > \frac12$.
\end{proof}

\begin{proof}[Proof of Theorem \ref{thm:DiniLipschitz}]
  The following proof is an analogue of a proof of Freud for polynomial approximation (\cite[Thm.~IV.5.6]{freud1971orthogonal}). Define the functions $f_1$ and $f_2$ by
  \begin{equation*}
  f_1(x) = \begin{cases}
  f(x) & \text{ for } x \in [a,b] \\
  f(a) & \text{ for } x \in [-1,a) \\
  f(b) & \text{ for } x \in (b,1],
  \end{cases}
  \end{equation*}
  and $f_2 = f - f_1$. Since $f_2$ vanishes in $[a,b]$ and is in $L^2(-1,1)$, we have by Theorem \ref{thm:localization} that $\PN(f_2) \to 0$ uniformly in all subintervals $[c,d] \subset (a,b)$. It is clear by the definition of $f_1$ and the definition of Dini--Lipschitz continuity in equation \eqref{eqn:DiniLipschitz} that $f_1$ is also uniformly Dini--Lipschitz in $[-1,1]$. By Lemma \ref{lem:superlemma},
  \begin{equation*}
  \|f_1 - \PN(f_1)\|_{L^\infty(c,d)} = \mathcal{O}(\log N) E\left(f_1;\Hspace_N\right).
  \end{equation*}
  By Lemma \ref{lem:DiniLipschitzmodulus} and Theorem \ref{thm:Jacksonmain}, $E\left(f_1;\Hspace_N\right) = o(1/\log N)$. This proves that $\PN(f_1) \to f_1$ uniformly on all subintervals $[c,d] \subset (a,b)$. Now, since $f = f_1 + f_2$, we have proved the result.
\end{proof}

\section{The Lebesgue function of Fourier extensions}\label{sec:prolatekernel}

Recall from Section \ref{sec:pointwiseconvergence} that the kernel associated with the Fourier extension operator $P_N$ is the bivariate function on $[-1,1] \times [-1,1]$, 
\begin{equation*}
K_N(x,y) = \sum_{k=1}^N e_k(x) \overline{e_k(y)},
\end{equation*}
where $\{e_k\}_{k=1}^N$ is any orthonormal basis for $\Hspace_N$. We call this kernel the \emph{prolate kernel}, because one particular choice of orthonormal basis is the discrete prolate spheroidal wave functions (DPSWFs). These functions, denoted $\{\xi_{k,N} \}_{k=1}^N$, are the $N$ eigenfunctions of a time-band-limiting operator; specifically, there exist eigenvalues $\{\lambda_{k,N} \}_{k=1}^N$ such that
\begin{equation*}
 \int_{-1}^1 \xi_{k,N}(y) \frac{\sin\left(\frac{N \pi}{T}(x-y)\right)}{\sin\left(\frac{ \pi}{T}(x-y)\right)} \, \d y = \lambda_{k,N} \xi_{k,N}(x),
\end{equation*} 
for $k = 1,\ldots N$. DPSWFs play an important role in the analysis of perfectly bandlimited and nearly timelimited periodic signals, which was pioneered by Landau, Pollak and Slepian in the 1970s \cite{slepian1978prolate}. More recently, they have also been shown to be important for the computation of Fourier extensions, because the regularized version of Fourier extensions projects onto the DPSWFs $\xi_{k,N}$ with eigenvalues $\lambda_{k,N} > \varepsilon$ for a given tolerance $\varepsilon > 0$ \cite{adcock2014numerical,adcock2018frames}. This is discussed in Section \ref{sec:discussion}.

The key outcome of this section is a proof of the following theorem.

\begin{theorem}[Lebesgue function bounds]\label{thm:lebesguefunction}
\begin{enumerate}[(i)]
 \item For each closed interval $[a,b] \subset (-1,1)$, the Lebesgue function satisfies
 \begin{equation*}
 \sup_{x \in [a,b]}\Lambda(x;\PN) = \mathcal{O}(\log N).
 \end{equation*}
 \item Over the whole interval, $[-1,1]$, we have
   \begin{equation*}
   \sup_{x \in [-1,1]}\Lambda(x;\PN) = \mathcal{O}(N^{\frac12}).
   \end{equation*}
\end{enumerate}
\end{theorem}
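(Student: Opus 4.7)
The plan is to use the explicit formula from Lemma \ref{lem:prolatekernelformula}, which expresses $K_N(x,y)$ via the Christoffel--Darboux kernel of the Legendre polynomials on a circular arc, together with the Riemann--Hilbert asymptotics of Krasovsky for those polynomials. Writing $\{p_n\}$ for the orthonormal Legendre polynomials on the arc, Christoffel--Darboux gives a representation of the schematic form
\begin{equation*}
K_N(x,y) = c_N \cdot \frac{p_N(x)\, p_{N-1}(y) - p_{N-1}(x)\, p_N(y)}{\phi(x) - \phi(y)} \cdot (\text{smooth Jacobian factors}),
\end{equation*}
where $\phi$ is an appropriate mapping related to $t = m(x)$. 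Krasovsky's asymptotics split into two regimes: a \emph{bulk} regime on compact subsets of $(-1,1)$ where each $p_k$ oscillates with amplitude of constant order, and an \emph{edge} regime near $\pm 1$ where Plancherel--Rotach-type behaviour produces individual values of size $O(1)$ but permits $K_N(x,x)$ to grow like $N$.

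Part (ii) will follow from a single application of Cauchy--Schwarz. Indeed,
\begin{equation*}
\Lambda(x;\PN) \le \sqrt{2}\,\left(\int_{-1}^1 |K_N(x,y)|^2\,\d y\right)^{1/2} = \sqrt{2\, K_N(x,x)},
\end{equation*}
where the equality uses the reproducing-kernel identity $\int |K_N(x,y)|^2\,\d y = K_N(x,x)$. It therefore suffices to prove the pointwise diagonal bound $K_N(x,x) = O(N)$ uniformly on $[-1,1]$. This is obtained by summing the pointwise estimates on $|p_k(x)|^2$ supplied by the edge and bulk asymptotics; the worst case is near the endpoints, where each of the $N$ terms is $O(1)$.

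Part (i) will come from splitting the integral defining $\Lambda(x;\PN)$ into a far region and a near-diagonal region. Fix $[a,b]\subset(-1,1)$. The bulk asymptotics give $|p_N(x)|,|p_{N-1}(x)| = O(1)$ uniformly on an open neighbourhood of $[a,b]$, so the Christoffel--Darboux numerator is $O(1)$ and one obtains the pointwise bound $|K_N(x,y)| \le C/|x-y|$ whenever $x\in[a,b]$ and $|x-y|\ge 1/N$. Integrating yields $\int_{|x-y|\ge 1/N}|K_N(x,y)|\,\d y = O(\log N)$. For the complementary near-diagonal region $|x-y|<1/N$, the reproducing-kernel bound $|K_N(x,y)| \le K_N(x,x)^{1/2} K_N(y,y)^{1/2} = O(N)$ gives a contribution of $O(N)\cdot O(1/N) = O(1)$. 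Summing the two pieces produces the claimed $O(\log N)$ estimate.

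The principal obstacle is uniformity. Krasovsky's Riemann--Hilbert asymptotics are stated with explicit leading-order formulae and error terms, and these must be tracked uniformly in the spatial variables and across the transition zone (of width $O(1/N)$) between the bulk and edge regimes. A further technical point is that the change of variables $t = m(x)$ is two-to-one on $(-1,1)$, so the formula of Lemma \ref{lem:prolatekernelformula} decomposes $K_N$ as a sum of two Christoffel--Darboux kernels (corresponding to the even and odd pieces), and each must be estimated separately before being recombined. Once these uniform forms of the asymptotics are in hand, the integrals above are routine.
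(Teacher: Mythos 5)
Your overall strategy (Christoffel--Darboux formula plus Krasovsky's bulk/edge asymptotics, then a near-diagonal/far split) is the right one and matches the paper's, but both parts contain genuine gaps, and the one in part (ii) is fatal to the argument as written. The claim that $K_N(x,x)=\mathcal{O}(N)$ uniformly on $[-1,1]$ is false: you have the bulk and edge regimes backwards. In the bulk each $|\Pi_k(e^{\frac{i\pi}{T}x})|^2$ is $\mathcal{O}(1)$ and the diagonal is $\mathcal{O}(N)$; at the endpoints the edge asymptotics give $|\Pi_k(e^{\pm\frac{i\pi}{T}})|\asymp k^{\frac12}$, so each term is $\mathcal{O}(k)$ and $K_N(\pm1,\pm1)=\Theta(N^2)$ (exactly as for Legendre polynomials, where the orthonormal polynomial at $x=1$ equals $\sqrt{2k+1}$). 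Consequently your global Cauchy--Schwarz step yields only $\Lambda(x;\PN)\le\sqrt{2K_N(x,x)}=\mathcal{O}(N)$, a full factor $N^{\frac12}$ short of the theorem. This soft argument cannot be repaired by better bookkeeping: the paper's proof of part (ii) is long precisely because one must exploit a cancellation in the imaginary part of the Christoffel--Darboux numerator near the endpoints --- the combination $J_0(N\eta)J_1(N\lambda)-J_1(N\eta)J_0(N\lambda)$ together with the $(1-y)^{\frac14}$-type prefactors --- to beat the naive product bound $\mathcal{O}(N^{\frac12})\cdot\mathcal{O}(N^{\frac12})/|x-y|$ on the intermediate region. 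Your proposal never touches this cancellation.

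Part (i) also has a gap, though a repairable one. Your pointwise bound $|K_N(x,y)|\le C/|x-y|$ for $x\in[a,b]$ and $|x-y|\ge 1/N$ requires the Christoffel--Darboux numerator to be $\mathcal{O}(1)$, but the numerator contains $\Pi_N(e^{\frac{i\pi}{T}y})$ with $y$ ranging over all of $[-1,1]$; for $y$ near $\pm1$ this factor is $\mathcal{O}(N^{\frac12})$, and integrating $\mathcal{O}(N^{\frac12})/|x-y|$ over an $\mathcal{O}(1)$-measure region near the endpoints would give $\mathcal{O}(N^{\frac12})$, not $\mathcal{O}(\log N)$. The fix is the paper's three-way split: on $N^{-1}<|x-y|\le\tau$ (with $\tau$ small enough that $y$ remains in the bulk) your $1/|x-y|$ bound is valid and integrates to $\mathcal{O}(\log N)$, while on $|x-y|>\tau$ one bounds $|K_N(x,y)|\le C|\Pi_N(e^{\frac{i\pi}{T}y})|$ and uses $\int_{-1}^1|\Pi_N(e^{\frac{i\pi}{T}y})|\,\d y=\mathcal{O}(1)$, which follows from Cauchy--Schwarz and the $L^2(-1,1)$ normalization of the basis. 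Your near-diagonal estimate in part (i) is fine. Finally, a structural note: the kernel here is a single Christoffel--Darboux kernel for orthogonal polynomials on the unit circle (Lemma \ref{lem:prolatekernelformula}), not a sum of two interval kernels arising from the two-to-one map $t=m(x)$; that decomposition is unnecessary.
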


This will be proved by finding asymptotic formulae for the prolate kernel $K_N$. The reader can verify that $K_N$ is invariant under a change of orthonormal basis for $\Hspace_N$, so a suitable choice of basis is desired. We have found that rather than the DPSWF basis, a basis related to orthogonal polynomials on the unit circle have been more amenable to analysis. For $N = 2n+1$, recall the definition of the $N$-dimensional space $\Hspace_N$,
\begin{equation*}
\Hspace_N = \myspan\left\{e^{\frac{i\pi}{\T}kx}\right\}_{k=-n}^n.
\end{equation*}
Any function $r_N \in \mathcal{H}_N$ is of the form
\begin{equation*}
r_N(x) = e^{-\frac{i\pi}{\T}nx} p_{2n}(e^{\frac{i\pi}{\T}x}),
\end{equation*}
where $p_{2n}$ is a polynomial of degree $2n$. Using this idea we prove the following lemma.
\begin{lemma}[Orthonormal basis for $\Hspace_N$]\label{lem:HNonb}
  Let $\left\{\Pi_{k}(z)\right\}_{k=0}^{\infty}$ be the (normalized) orthogonal polynomials on the unit circle with respect to the weight
  \begin{equation*}
   f(\theta) = 2\T \cdot \chi_{\left[-\frac{\pi}{\T},\frac{\pi}{\T}\right]}(\theta), \quad \theta \in [-\pi,\pi],
  \end{equation*}
    i.e. for $j,k = 0,1,2,\ldots$,
    \begin{equation*}
    \frac{1}{2\pi}\int_{-\pi}^\pi \overline{\Pi_k(e^{i\theta})}\Pi_j(e^{i\theta}) \, f(\theta)\d\theta = \delta_{j,k}.
    \end{equation*}
   Then the set
    \begin{equation*}
    \left\{ e^{-\frac{i\pi}{\T}nx} \cdot \Pi_k\left( e^{\frac{i\pi}{\T}x} \right) \right\}_{k=0}^{2n}
    \end{equation*}
    forms an orthonormal basis for $\mathcal{H}_N$.
    \begin{proof}
      By the observation immediately preceding this lemma, the set forms a basis for $\mathcal{H}_N$ because $\{\Pi_k\}_{k=0}^{2n}$ forms a basis for polynomials of degree $2n$. We need only show its orthonormality with respect to the inner product on $\Hspace_N$ induced by $L^2(-1,1)$.
      Let $j , k \in \{0,\ldots, 2n\}$. Then, making the change of variables $\theta = \frac{\pi}{\T}x$, we have
      \begin{eqnarray*}
        \int_{-1}^1 \overline{e^{-\frac{i\pi}{\T}nx} \cdot \Pi_j\left( e^{\frac{i\pi}{\T}x} \right)} e^{-\frac{i\pi}{\T}nx} \cdot \Pi_k\left( e^{\frac{i\pi}{\T}x} \right) \, \d x &=& \int_{-1}^1 \overline{\Pi_j\left( e^{\frac{i\pi}{\T}x} \right)} \Pi_k\left( e^{\frac{i\pi}{\T}x} \right) \, \d x \\
         &=& \int_{-\frac{\pi}{\T}}^{\frac{\pi}{\T}} \overline{\Pi_j\left(e^{i\theta} \right)} \Pi_k\left(e^{i\theta}\right) \frac{\T}{\pi}\,\d \theta \\
         &=& \frac{1}{2\pi}\int_{-\pi}^\pi \overline{\Pi_j(e^{i\theta})}\Pi_k(e^{i\theta}) \, f(\theta)\d\theta. 
      \end{eqnarray*}
    By the orthonormal relationship between $\Pi_k$ and $\Pi_j$ on the unit circle, the basis is orthonormal on $[-1,1]$.
    \end{proof}
\end{lemma}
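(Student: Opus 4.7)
The plan is to split the proof into a dimension-count argument showing that the proposed set spans $\mathcal{H}_N$, followed by a direct verification of orthonormality via a change of variables.

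For the basis claim, I would first observe that $\mathcal{H}_N$ has dimension $N = 2n+1$, and that multiplication by the unimodular factor $e^{-\frac{i\pi}{\T}nx}$ gives a linear bijection between $\mathcal{H}_N$ and the space of functions $\{p(e^{\frac{i\pi}{\T}x}) : \deg p \leq 2n\}$: indeed $\sum_{k=-n}^{n} c_k e^{\frac{i\pi}{\T}kx} = e^{-\frac{i\pi}{\T}nx}\sum_{k=0}^{2n}c_{k-n}(e^{\frac{i\pi}{\T}x})^k$, with inverse given by multiplying by $e^{\frac{i\pi}{\T}nx}$. Since $\{\Pi_k\}_{k=0}^{2n}$ is a basis for the polynomials of degree at most $2n$ (they are mutually orthogonal, hence linearly independent, and there are $2n+1$ of them), transporting this basis through the bijection yields a spanning set of the correct cardinality $N$ for $\mathcal{H}_N$.

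For orthonormality, I would compute the $L^2(-1,1)$ inner product of two proposed basis elements. Because $|e^{-\frac{i\pi}{\T}nx}|=1$, that prefactor cancels out, leaving $\int_{-1}^{1}\overline{\Pi_j(e^{\frac{i\pi}{\T}x})}\,\Pi_k(e^{\frac{i\pi}{\T}x})\,\d x$. The substitution $\theta = \frac{\pi}{\T}x$, with $\d x = \frac{\T}{\pi}\d\theta$, maps $[-1,1]$ onto $[-\pi/\T,\pi/\T]$ and produces
\[
\frac{\T}{\pi}\int_{-\pi/\T}^{\pi/\T}\overline{\Pi_j(e^{i\theta})}\Pi_k(e^{i\theta})\,\d\theta.
\]
Rewriting the constant as $\frac{\T}{\pi} = \frac{1}{2\pi}\cdot 2\T$ and extending the integral to $[-\pi,\pi]$ using the indicator $\chi_{[-\pi/\T,\pi/\T]}$ converts this into $\frac{1}{2\pi}\int_{-\pi}^{\pi}\overline{\Pi_j(e^{i\theta})}\Pi_k(e^{i\theta})\,f(\theta)\,\d\theta$, which equals $\delta_{j,k}$ by the defining orthonormality of the $\Pi_k$.

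I do not anticipate any genuine obstacle: the result is essentially bookkeeping, and the entire point of the lemma is to recognise that the weight $f(\theta) = 2\T\,\chi_{[-\pi/\T,\pi/\T]}(\theta)$ is designed precisely so that the standard normalising measure $\frac{1}{2\pi}\d\theta$ for orthogonal polynomials on the unit circle becomes the pushforward of Lebesgue measure $\d x$ on $[-1,1]$ under $x \mapsto e^{i\pi x/\T}$. The only care needed is to track the factors of $\T/\pi$ and $2\T$ correctly, and to note that the arc of length $2\pi/\T$ on which the weight is supported is exactly the image of $[-1,1]$ under this map.
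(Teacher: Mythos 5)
Your proposal is correct and follows essentially the same route as the paper: the spanning claim via the unimodular-prefactor bijection with polynomials of degree at most $2n$ (which the paper attributes to the observation preceding the lemma), and orthonormality via the substitution $\theta = \frac{\pi}{\T}x$ together with the observation that the weight $2\T\,\chi_{[-\pi/\T,\pi/\T]}$ is exactly what turns $\frac{1}{2\pi}\,\d\theta$ into Lebesgue measure on $[-1,1]$. No gaps.
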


The Christoffel-Darboux formula for orthogonal polynomials on the unit circle states that,
\begin{equation*}
\sum_{k=0}^{N-1} \overline{\Pi_k(\zeta)} \Pi_k(z)  = \frac{\overline{\Pi^*_N(\zeta)} \Pi^*_N(z) - \overline{\Pi_N(\zeta)} \Pi_N(z) }{1-\overline{\zeta}z}, \qquad z, \zeta \in \mathbb{C}, \quad \overline{\zeta}z \neq 1,
\end{equation*}
where $\Pi^*_N(z) = z^N \overline{\Pi\left(\overline{z}^{-1}\right)}$ (which is also a polynomial of degree $N$) \cite[Thm 11.42]{szeg1939orthogonal}. On the unit circle itself, where $z = e^{i\theta}$, $\zeta=e^{i\phi}$, this reduces, after some elementary manipulations, to
\begin{equation}\label{eqn:ChristoffelDarboux}
\sum_{k=0}^{N-1} \overline{\Pi_k(e^{i\phi})} \Pi_k(e^{i\theta}) = e^{i\frac{N-1}{2}(\theta - \phi)} \cdot \mathrm{Imag}\left( \frac{\overline{e^{-i\frac{N}{2}\phi} \cdot \Pi_N\left(e^{i\phi}\right)} \cdot e^{-i\frac{N}{2}\theta} \cdot \Pi_N\left(e^{i\theta}\right)   }{\sin\left(\frac{\theta-\phi}{2} \right)}\right).
\end{equation}
From this general formula for orthogonal polynomials on the unit circle, we prove the following lemma regarding the prolate kernel.

\begin{lemma}[Prolate kernel formula]\label{lem:prolatekernelformula}
  For all $x,y \in \left[ -1,1 \right]$,
  \begin{equation*}
  K_N(x,y) = \mathrm{Imag}\left( \frac{\overline{e^{-\frac{i\pi}{\T}\frac{N}{2}y} \cdot \Pi_N\left(e^{\frac{i\pi}{T}y}\right)} \cdot e^{-\frac{i\pi}{\T}\frac{N}{2}x} \cdot \Pi_N\left(e^{\frac{i\pi}{T}x}\right)   }{\sin\left(\frac{\pi}{2\T}(x-y) \right)}\right).
  \end{equation*}
  The formula in fact holds for all $x,y \in [-T,T]$. 
  \begin{remark}
    Setting $\T = 1$ in this formula returns the Dirichlet kernel of classical Fourier series, because $\Pi_N(z) = z^N$ for the trivial weight $f(\theta) \equiv 1$.
  \end{remark}
  \begin{proof}
    By the fact that $\left\{ e^{-\frac{i\pi}{\T}nx} \cdot \Pi_k\left( e^{\frac{i\pi}{\T}x} \right) \right\}_{k=0}^{2n}$ is an orthonormal basis for $\Hspace_N$, from Lemma \ref{lem:HNonb}, we have that
    \begin{eqnarray*}
      K_N(x,y) &=& \sum_{k=0}^{2n} \overline{e^{-\frac{i\pi}{\T}ny} \Pi_k\left( e^{\frac{i\pi}{\T}y}\right)} e^{-\frac{i\pi}{\T}nx} \Pi_k\left( e^{\frac{i\pi}{\T}x}\right)  \\
      &=& e^{\frac{i\pi}{\T}n(y-x)} \sum_{k=0}^{N-1} \overline{\Pi_k\left(e^{\frac{i\pi}{\T}y}\right)}  \Pi_k\left(e^{\frac{i\pi}{\T}x}\right).
    \end{eqnarray*}
  The proof is completed by considering the Christoffel-Darboux formula for orthogonal polynomials on the unit circle in equation \eqref{eqn:ChristoffelDarboux} (note that $\frac{N-1}{2} = n$).  
  \end{proof}
\end{lemma}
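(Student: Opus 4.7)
The plan is to combine Lemma \ref{lem:HNonb}, which hands us a concrete orthonormal basis for $\Hspace_N$ built from orthogonal polynomials on the unit circle, with the Christoffel--Darboux formula on the unit circle (equation \eqref{eqn:ChristoffelDarboux}). Since $K_N$ is independent of the choice of orthonormal basis, we are free to plug this basis in and push all the work onto identities for the $\Pi_k$.

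Concretely, I would first write
\begin{equation*}
K_N(x,y) \;=\; \sum_{k=0}^{2n} \overline{e^{-\frac{i\pi}{\T}ny}\Pi_k\!\left(e^{\frac{i\pi}{\T}y}\right)}\; e^{-\frac{i\pi}{\T}nx}\Pi_k\!\left(e^{\frac{i\pi}{\T}x}\right),
\end{equation*}
and then factor the $x$- and $y$-dependent exponentials out of the summation, leaving
\begin{equation*}
K_N(x,y) \;=\; e^{\frac{i\pi}{\T}n(y-x)} \sum_{k=0}^{N-1} \overline{\Pi_k\!\left(e^{\frac{i\pi}{\T}y}\right)}\,\Pi_k\!\left(e^{\frac{i\pi}{\T}x}\right).
\end{equation*}
This is the only step that uses the specific structure of $\Hspace_N$, and it reduces the problem to a sum of the exact type that the Christoffel--Darboux formula handles.

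Next I would apply equation \eqref{eqn:ChristoffelDarboux} with $\theta = \frac{\pi}{\T}x$ and $\phi = \frac{\pi}{\T}y$. This produces an overall phase factor $e^{i\frac{N-1}{2}(\theta - \phi)} = e^{\frac{i\pi}{\T}n(x-y)}$, which is exactly the conjugate of the factor pulled out above, so the two multiply to $1$. What remains is an $\mathrm{Imag}(\cdots)$ with the numerator $\overline{e^{-i\frac{N}{2}\phi}\Pi_N(e^{i\phi})}\,e^{-i\frac{N}{2}\theta}\Pi_N(e^{i\theta})$ and denominator $\sin\!\left(\tfrac{\theta-\phi}{2}\right) = \sin\!\left(\tfrac{\pi}{2\T}(x-y)\right)$, which is the claimed formula.

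The only potential obstacle is bookkeeping on the phase: one must verify that $n = (N-1)/2$ makes the two phase factors cancel exactly and that pulling the unimodular exponential through the $\mathrm{Imag}$ is legitimate (it is, because the factor we cancel is real after the cancellation). For the extension to $x,y \in [-\T,\T]$, the right-hand side is a well-defined meromorphic expression in those variables and the derivation above uses only $e^{i\theta}, e^{i\phi}$ on the unit circle, so exactly the same calculation goes through without change. The remark about the classical Dirichlet kernel at $\T = 1$ is then immediate: the weight $f$ becomes constant on $[-\pi,\pi]$, its orthonormal polynomials on the circle are monomials $\Pi_N(z) = z^N$, and substituting this into the formula collapses it to the usual $\sin\!\left(\tfrac{N\pi}{2}(x-y)\right)/\sin\!\left(\tfrac{\pi}{2}(x-y)\right)$-style Dirichlet expression.
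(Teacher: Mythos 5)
Your proposal is correct and follows essentially the same route as the paper: expand $K_N$ in the orthonormal basis of Lemma \ref{lem:HNonb}, factor out the exponentials, and apply the Christoffel--Darboux formula \eqref{eqn:ChristoffelDarboux} with $\theta = \frac{\pi}{\T}x$, $\phi = \frac{\pi}{\T}y$, noting that the prefactor $e^{\frac{i\pi}{\T}n(y-x)}$ cancels exactly against the phase $e^{i\frac{N-1}{2}(\theta-\phi)}$ since $n = \frac{N-1}{2}$. Your extra remarks on the phase bookkeeping and the extension to $[-\T,\T]$ are sound and only make explicit what the paper leaves implicit.
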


Now, to ascertain asymptotics of the prolate kernel, it is sufficient to ascertain asymptotics of the orthogonal polynomials $\{\Pi_k(z)\}_{k=0}^\infty$. These polynomials have been studied before in the literature, and are known as the Legendre polynomials on a circular arc \cite{magnus2006freud}.
\begin{theorem}\label{thm:asymptotics}
  Let $\{\Pi_k\}_{k=0}^\infty$ be the (normalized) orthogonal polynomials on the unit circle with respect to the weight $f(\theta) = 2\T \cdot \chi_{\left[-\frac{\pi}{\T},\frac{\pi}{\T}\right]}(\theta)$, and for $x\in[-1,1]$ define the variable $\eta \in [0,\pi]$ by
  \begin{equation*}
  \eta = \cos^{-1}\left(\frac{\sin\left(x\frac{\pi}{2\T} \right)}{\sin\left(\frac{\pi}{2\T} \right)}\right).
  \end{equation*} There exists a constant $\delta > 0$ such that for $x \in [-1+\delta,1-\delta]$, 
  \begin{eqnarray}
   \Pi_N\left(e^{\frac{i\pi}{\T}x}\right) &=& \frac{e^{\frac{i\pi}{\T}\frac{N}{2}x}}{\sqrt{2\T\sin\left(\frac{\pi}{2\T}\right)}} \bigg(e^{-\frac{i\pi}{4\T}}\left(\frac{\sin\left((1+x)\frac{\pi}{2\T}\right)}{\sin\left((1-x)\frac{\pi}{2\T}\right)}\right)^{\frac14}\cos\left(N\eta - \frac{\pi}{4}\right) \\ 
  & & \qquad\qquad\qquad - \quad e^{\frac{i\pi}{4\T}}\left(\frac{\sin\left((1-x)\frac{\pi}{2\T}\right)}{\sin\left((1+x)\frac{\pi}{2\T}\right)}\right)^{\frac14}\sin\left(N\eta - \frac{\pi}{4}\right)\bigg) + \mathcal{O}(N^{-1}), \nonumber
  \end{eqnarray}
  and for $x \in [1-\delta,1]$,
  \begin{eqnarray}
    \Pi_N\left(e^{\frac{i\pi}{\T}x}\right) &=& \frac{  e^{\frac{i\pi}{\T}\frac{N}{2}x}}{\sqrt{2\T\sin\left(\frac{\pi}{2\T}\right)}}\left(\frac{\pi}{2}N\eta\right)^{\frac12} \bigg(e^{-\frac{i\pi}{4\T}}\left(\frac{\sin\left((1+x)\frac{\pi}{2\T}\right)}{\sin\left((1-x)\frac{\pi}{2\T}\right)}\right)^{\frac14}J_0\left(N\eta\right) \\
  & & \qquad\qquad\qquad\qquad\qquad - \quad e^{\frac{i\pi}{4\T}}\left(\frac{\sin\left((1-x)\frac{\pi}{2\T}\right)}{\sin\left((1+x)\frac{\pi}{2\T}\right)}\right)^{\frac14}J_1\left(N\eta\right)\bigg) + \mathcal{O}(N^{-\frac12}). \nonumber
  \end{eqnarray}
  The constants in the big $\mathcal{O}$ depend only on $T$ and $\delta$. The asymptotics for $x\in[-1,-1+\delta]$ are found by using the relation $\Pi_N\left(e^{-\frac{i\pi}{\T}x}\right) = \overline{\Pi_N\left(e^{\frac{i\pi}{\T}x}\right)}$. 
  
  In terms of magnitude with respect to $N$, we have
  \begin{equation}\label{eqn:asymptoticsizePiN}
  \Pi_N\left(e^{\frac{i\pi}{\T}x}\right) = \begin{cases} \mathcal{O}(1) & \text{ for } x \in [-1+\delta,1-\delta], \\
                                                          \mathcal{O}(N^{\frac12}) & \text{ for } x \in [-1,-1+\delta] \cup [1-\delta,1]. \end{cases}
  \end{equation}
  \begin{remark}
     The asymptotic order of $\Pi_N\left(e^{\frac{i\pi}{\T}x}\right)$ with respect to $N$ in equation \eqref{eqn:asymptoticsizePiN} is the same as for the $N$th (normalized) Legendre polynomial in $[-1,1]$ \cite[Thm.~8.21.6]{szeg1939orthogonal}. Further discussion on how Legendre series approximations compare to Fourier extensions lies in subsection 8.1.
  \end{remark}
  \begin{proof}
    This result follows directly from Lemma \ref{lem:asymArcLeg}, because if we take $\alpha = \pi - \pi/\T$ and $f_\alpha(\theta) \equiv 1$, then the polynomials $\Pi_N(z) = (2\T)^{-\frac12}\phi_N\left(-z,\alpha\right)$ satisfy the orthonormality conditions that define $\Pi_N$ as in Lemma \ref{lem:HNonb}. To obtain the asymptotic formula above, make the change of variables $\theta = \frac{\pi}{\T}x + \pi$ in the asymptotic formulae for $\phi_N(z,\alpha)$. Be careful to note that the endpoint with explicit formula given above ($x = 1$), corresponds to $\theta = 2\pi - \alpha$, which is not the endpoint with explicit formula given in Lemma \ref{lem:asymArcLeg} ($\theta = \alpha$). This was done to shorten the expressions for the asymptotics at the endpoints.

    To complete the proof we must prove equation \eqref{eqn:asymptoticsizePiN}. For $x \in [-1+\delta,1-\delta]$, all of the terms are clearly bounded above by $\mathcal{O}(\delta^{-\frac14}) = \mathcal{O}(1)$. Now let $x\in[1-\delta,1]$. We have $\eta^2 \leq \frac{\pi^2}{4}(1-\cos(\eta))$ for all $\eta \in \left[0,\frac{\pi}{2}\right]$ and $x \sin\left(\frac{\pi}{2\T}\right)  \leq \sin\left(x\frac{\pi}{2\T}\right) \leq x\frac{\pi}{2\T}$ for all $x \in [0,1]$. Assuming $\delta < \frac12$, we have $x,y \in [0,1]$ and $\eta,\lambda \in \left[0,\frac{\pi}{2}\right]$, and hence $\eta^2 \leq \frac{\pi^2}{4}(1-x)$. Since $1-x \in [0,1]$ we have $1-x \leq \sin\left((1-x)\frac{\pi}{2\T}\right)/\sin\left(\frac{\pi}{2\T}\right)$, so $\eta^2 \leq \sin\left((1-x)\frac{\pi}{2\T}\right) \frac{\pi^2}{4\sin\left(\frac{\pi}{2\T}\right)}$. This implies that
    \begin{equation*}
    \left(\frac{\eta^2\sin\left((1+x)\frac{\pi}{2\T}\right)}{\sin\left((1-x)\frac{\pi}{2\T}\right)}\right)^{\frac14} = \mathcal{O}(1),
    \end{equation*}
    uniformly for all $x \in [0,1]$. Note also that Bessel functions are uniformly bounded in absolute value by $1$ (see \cite[Eq.~10.14.1]{NIST:DLMF}). This makes it clear that $\Pi_N\left(e^{\frac{i\pi}{\T}x}\right) = \mathcal{O}(N^{\frac12})$ for $x\in[1-\delta,1]$. For $x \in [-1,-1+\delta]$, use the relation $\Pi_N\left(e^{-\frac{i\pi}{\T}x}\right) = \overline{\Pi_N\left(e^{\frac{i\pi}{\T}x}\right)}$.
  \end{proof}
\end{theorem}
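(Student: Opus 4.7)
The plan is to identify $\Pi_N$ with a rescaled version of the Legendre polynomials on a circular arc, transport the known Riemann--Hilbert asymptotics for the latter through the appropriate change of variables, and then extract the stated magnitude bounds by elementary trigonometric estimates.

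First I would observe that the weight $f(\theta) = 2T\,\chi_{[-\pi/T,\pi/T]}(\theta)$ is, up to the reflection $\theta\mapsto\theta+\pi$ (equivalently $z\mapsto -z$), supported on a symmetric arc of the form $[\alpha,2\pi-\alpha]$ of the unit circle, with $\alpha = \pi - \pi/T$. If $\phi_N(\cdot,\alpha)$ denotes the (normalized) orthogonal polynomials associated with the trivial weight $f_\alpha\equiv 1$ on that arc -- these are exactly the Legendre polynomials on a circular arc studied by Magnus and by Krasovsky, whose asymptotics on the arc itself are the content of Lemma \ref{lem:asymArcLeg} -- then a direct change of variables in the inner product shows that $\Pi_N(z) = (2T)^{-1/2}\phi_N(-z,\alpha)$ satisfies the orthonormality conditions that characterize $\Pi_N$ in Lemma \ref{lem:HNonb}. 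Since the orthonormal system is unique up to a unimodular factor, this identifies the two families.

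Next I would substitute $\theta = \tfrac{\pi}{T}x + \pi$ into the asymptotic formulae for $\phi_N(e^{i\theta},\alpha)$ provided by Lemma \ref{lem:asymArcLeg}. The variable $\eta\in[0,\pi]$ appearing in Theorem \ref{thm:asymptotics} is precisely the image of $x$ under the arc-to-interval map $\cos\eta = \sin(x\pi/(2T))/\sin(\pi/(2T))$, which is standard in this setting. Care must be taken with the endpoint correspondence: the endpoint $x=1$ corresponds under the shift to $\theta = 2\pi-\alpha$, which is the \emph{opposite} endpoint to the one for which Lemma \ref{lem:asymArcLeg} gives an explicit Bessel-type formula (at $\theta=\alpha$); the conjugate symmetry $\phi_N(\bar z,\alpha) = \overline{\phi_N(z,\alpha)}$ (which becomes $\Pi_N(e^{-i\pi x/T}) = \overline{\Pi_N(e^{i\pi x/T})}$) is used to transfer the endpoint expansion across, which also explains why the Bessel expansion in Theorem \ref{thm:asymptotics} is stated at $x=1$ rather than $x=-1$. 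Matching the $e^{\pm i\pi/(4T)}$ prefactors and the quarter-power Szeg\H{o}-type weights on either side of the change of variables produces the oscillatory expansion on the bulk and the Bessel expansion at the endpoint.

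Finally, to obtain the magnitude bound \eqref{eqn:asymptoticsizePiN} I would argue as in the paper's own sketch. In the bulk $x\in[-1+\delta,1-\delta]$ every factor in the asymptotic formula is bounded by a constant depending only on $T$ and $\delta$, so $\Pi_N(e^{i\pi x/T}) = \mathcal{O}(1)$. Near the endpoint $x\in[1-\delta,1]$, the prefactor $(N\eta)^{1/2}$ is potentially of size $N^{1/2}$, but one must verify that the singular factor $(\sin((1+x)\pi/(2T))/\sin((1-x)\pi/(2T)))^{1/4}$ is compensated by the $\eta^{1/2}$. Using the elementary bounds $\eta^2 \le \tfrac{\pi^2}{4}(1-\cos\eta)$ on $[0,\pi/2]$ and $\sin((1-x)\pi/(2T))/\sin(\pi/(2T))\ge 1-x$, one obtains $\eta^2 \lesssim \sin((1-x)\pi/(2T))$, whence the combined factor $(\eta^2\sin((1+x)\pi/(2T))/\sin((1-x)\pi/(2T)))^{1/4}$ is $\mathcal{O}(1)$; together with the uniform bound $|J_0|,|J_1|\le 1$ this yields $\Pi_N(e^{i\pi x/T}) = \mathcal{O}(N^{1/2})$ at the endpoint, and the left endpoint follows by conjugation.

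The main obstacle is ensuring the bookkeeping in the second step is correct: matching the Krasovsky-type asymptotics (which are naturally phrased for the arc parametrization $\theta\in[\alpha,2\pi-\alpha]$) to the intervals $x\in[-1+\delta,1-\delta]$ and $x\in[1-\delta,1]$ under the shift-and-reflect map requires checking the signs and phases carefully, and identifying the $\sin((1\pm x)\pi/(2T))$ weights as the image of the endpoint factors in Lemma \ref{lem:asymArcLeg}. The actual analytic heavy lifting (the Riemann--Hilbert steepest-descent argument producing the Bessel parametrix at the endpoints) is entirely delegated to Lemma \ref{lem:asymArcLeg} in the appendix.
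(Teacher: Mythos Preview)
Your proposal is correct and follows essentially the same route as the paper: identify $\Pi_N(z)=(2T)^{-1/2}\phi_N(-z,\alpha)$ with $\alpha=\pi-\pi/T$, transport the asymptotics of Lemma~\ref{lem:asymArcLeg} through the substitution $\theta=\tfrac{\pi}{T}x+\pi$ (handling the endpoint swap via conjugate symmetry), and deduce the magnitude bound \eqref{eqn:asymptoticsizePiN} from the elementary inequality $\eta^2\le\tfrac{\pi^2}{4}(1-\cos\eta)$ combined with $1-x\le\sin((1-x)\tfrac{\pi}{2T})/\sin(\tfrac{\pi}{2T})$ and the uniform boundedness of $J_0,J_1$. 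There is nothing to add.
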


We now have the required results to prove Theorem \ref{thm:lebesguefunction}.

\begin{proof}[Proof of Theorem \ref{thm:lebesguefunction} part (i)]
Let $[a,b] \subset (-1,1)$ and choose $\tau > 0$ sufficiently small so that $[a-\tau,b+\tau] \subset (-1,1)$. Applying the first part of Theorem \ref{thm:asymptotics} gives us
\begin{equation}\label{eqn:o1bound}
\Pi_N\left(e^{\frac{i\pi}{\T}x}\right) = \mathcal{O}(1), \quad x \in [a-\tau,b+\tau].
\end{equation}
For the proof of part (i) we need to bound the integral $\int_{-1}^1 |K_N(x,y)| \,\d y$ uniformly for $x \in [a,b]$. We do so by dividing the interval $[-1,1]$ into the following subsets:
\begin{eqnarray*}
I_1 &=& \left\{y \in [-1,1] : |y - x| \leq N^{-1} \right\} \\
I_2 &=& \left\{y \in [-1,1] : N^{-1} < |y - x| \leq \tau \right\}\\
I_3 &=& \left\{y \in [-1,1] : \tau < |y - x| \right\}.
\end{eqnarray*}
We will obtain estimates for the kernel for $x \in [a,b]$ and $y$ in each of $I_1$, $I_2$ and $I_3$, and then estimate the associated integral over each of $I_1$, $I_2$ and $I_3$.

For $N > 1 / \tau$, we have $I_1$ and $I_2$ are nonempty and are contained within $[a - \tau, b+ \tau] \subset (-1,1)$. By equation \eqref{eqn:o1bound}, 
\begin{equation*}
\Pi_N\left(e^{\frac{i\pi}{\T}y}\right) = \mathcal{O}(1), \quad y \in I_1 \cup I_2.
\end{equation*}
 For $y \in I_1$, we have
\begin{equation*}
K_N(x,y) = e^{\frac{i\pi}{\T}n(y-x)} \sum_{k=0}^{N-1} \overline{\Pi_k(e^{\frac{i\pi}{\T}y})}  \Pi_k(e^{\frac{i\pi}{\T}x}) = \mathcal{O}(N).
\end{equation*}
This implies
\begin{equation*}
\int_{I_1} |K_N(x,y)| \,\d y \leq \mathcal{O}(N) \int_{I_1} \,\d y = \mathcal{O}(1),
\end{equation*}
because $|I_1| \leq 2N^{-1}$. 

By Lemma \ref{lem:prolatekernelformula},
\begin{equation*}
K_N(x,y) = \mathrm{Imag}\left( \frac{\overline{e^{-\frac{i\pi}{\T}\frac{N}{2}y} \cdot \Pi_N\left(e^{\frac{i\pi}{T}y}\right)} \cdot e^{-\frac{i\pi}{\T}\frac{N}{2}x} \cdot \Pi_N\left(e^{\frac{i\pi}{T}x}\right)   }{\sin\left(\frac{\pi}{2\T}(x-y) \right)}\right).
\end{equation*}
Note that since the sine function is concave in $[0,\pi]$, we have $|\sin\left(\frac{\pi}{2\T}(x-y)\right)| \geq \sin\left(\frac{\pi}{2\T}\right) |x-y|$ for $x,y \in [-1,1]$. Therefore, for all $y \in [-1,1]$,
\begin{equation*}
|K_N(x,y)| \leq \mathcal{O}(1)\frac{1}{|x-y|} \left|\Pi_N\left(e^{\frac{i\pi}{T}y}\right)\right|
\end{equation*}
For $y \in I_2$, this can be reduced to $|K_N(x,y)| \leq \mathcal{O}(1)\frac{1}{|x-y|}$. Therefore,
\begin{equation*}
\int_{I_2} |K_N(x,y)| \, \d y \leq \mathcal{O}(1) \int_{I_2} \frac{1}{|x-y|} \, \d y \leq \mathcal{O}(1) \int_{N^{-1}}^{\tau} \frac{1}{s} \, \d s = \mathcal{O}(\log(N)).
\end{equation*}
For $y \in I_3$, since $|x-y|^{-1} < \tau^{-1} = \mathcal{O}(1)$, we have $|K_N(x,y)| \leq \mathcal{O}(1) \left|\Pi_N\left(e^{\frac{i\pi}{T}y}\right)\right|$. Therefore,
\begin{equation*}
\int_{I_3} |K_N(x,y)| \, \d y \leq \mathcal{O}(1) \int_{I_3} \left|\Pi_N\left(e^{\frac{i\pi}{T}y}\right)\right| \, \d y \leq \mathcal{O}(1) \left(\int_{-1}^1  \left|\Pi_N\left(e^{\frac{i\pi}{T}y}\right)\right|^2 \, \d y \right)^{\frac12} = \mathcal{O}(1).
\end{equation*}
This proves that $\Lambda(x;\PN) = \mathcal{O}(\log(N))$ uniformly for $x \in [a,b]$.
\end{proof}

\begin{proof}[Proof of Theorem \ref{thm:lebesguefunction} part (ii)]
  Let $\delta \in \left(0,\frac14\right)$ be sufficiently small so that Theorem \ref{thm:asymptotics} applies to the intervals $[-1+2\delta,1-2\delta]$ and $[1-2\delta,1]$. Using part (i) of the present theorem, we have that for all $x \in [-1+\delta, 1-\delta]$ the Lebesgue function satisfies $\Lambda(x;\PN) = \mathcal{O}(\log(N)) = \mathcal{O}(N^{\frac12})$ uniformly in such $x$. Now, since $\Pi_N\left(e^{-\frac{i\pi}{\T}x}\right) = \overline{\Pi_N\left(e^{\frac{i\pi}{\T}x}\right)}$, it follows that $K_N(-x,y) = \overline{K_N(x,-y)}$, so that $\Lambda(-x;\PN) = \Lambda(x;\PN)$. Therefore, to complete the proof we need only show that $\Lambda(x;\PN) = \mathcal{O}(N^{\frac12})$ uniformly for $x \in [1-\delta,1]$. For such $x$, we divide the interval $[-1,1]$ into the following subsets:
  \begin{eqnarray*}
  I_1 &=& \left\{y \in [-1,1] : |y - x| \leq N^{-1} \text{ or } |1-y| \leq N^{-1}\right\} \\
  I_2 &=& \left\{y \in [-1,1] : N^{-1} < |y - x| \leq \delta  \text{ and } |1-y| > N^{-1}\right\}\\
  I_3 &=& \left\{y \in [-1,1] : \delta < |y - x| \text{ and } |1-y| > N^{-1} \right\}.
  \end{eqnarray*}
  By Theorem \ref{thm:asymptotics}, 
  \begin{equation*}
  \Pi_N\left(e^{\frac{i\pi}{\T}x}\right) = \mathcal{O}(N^{\frac12}), \quad x \in [-1,1].
  \end{equation*}
  Therefore,
  \begin{equation*}
  K_N(x,y) = e^{\frac{i\pi}{\T}n(y-x)} \sum_{k=0}^{N-1} \overline{\Pi_k(e^{\frac{i\pi}{\T}y})}  \Pi_k(e^{\frac{i\pi}{\T}x}) = \mathcal{O}(N^2).
  \end{equation*}
  By the Cauchy-Schwarz inequality and the fact that $|I_1| \leq 3N^{-1}$, we have
  \begin{equation*}
  \int_{I_1} |K_N(x,y)| \,\d y \leq \left(\int_{I_1} |K_N(x,y)|^2 \,\d y \right)^{\frac12} \left(\int_{I_1} \,\d y\right)^{\frac12} \leq \left(\frac{3}{N}\right)^{\frac12}\left(\int_{-1}^1 |K_N(x,y)|^2 \,\d y \right)^{\frac12}.
  \end{equation*}
  By the connection between $K_N$ and $P_N$, $\int_{-1}^1 |K_N(x,y)|^2 \,\d y = P_N\left(\overline{K_N(x,\cdot)}\right)(x)$. Since $\overline{K_N(x,y)} = K_N(y,x)$ and because $K_N(\cdot,x) \in \Hspace_N$ for each $x \in [-1,1]$, we have,
  \begin{equation*}
  \int_{-1}^1 |K_N(x,y)|^2 \,\d y = K_N(x,x).
  \end{equation*}
 Therefore,
  \begin{equation*}
  \int_{I_1} |K_N(x,y)| \,\d y = \mathcal{O}(N^{-\frac12}) \left(\mathcal{O}(N^2)\right)^{\frac12} = \mathcal{O}(N^{\frac12}).
  \end{equation*}
Just as in the proof of part (i) of the theorem, but this time using the estimate $\Pi_N\left(e^{\frac{i\pi}{\T}x}\right) = \mathcal{O}(N^{\frac12})$, we have for all $x,y \in [-1,1]$,
  \begin{equation*}
  |K_N(x,y)| \leq \mathcal{O}(N^{\frac12})\frac{1}{|x-y|} \left|\Pi_N\left(e^{\frac{i\pi}{T}y}\right)\right|.
  \end{equation*}
  Therefore, for $y \in I_3$,
  \begin{equation*}
  |K_N(x,y)| \leq \mathcal{O}(N^{\frac12})\left|\Pi_N\left(e^{\frac{i\pi}{T}y}\right)\right|,
  \end{equation*}
  because $|x-y| > \delta$ for $y\in I_3$. Hence, 
  \begin{equation*}
  \int_{I_3} |K_N(x,y)| \, \d y \leq \mathcal{O}(N^{\frac12}) \int_{I_3} \left|\Pi_N\left(e^{\frac{i\pi}{T}y}\right)\right| \, \d y \leq \mathcal{O}(N^{\frac12}) \left(\int_{-1}^1  \left|\Pi_N\left(e^{\frac{i\pi}{T}y}\right)\right|^2 \, \d y \right)^{\frac12} = \mathcal{O}(N^{\frac12}).
  \end{equation*}
  
  All that remains is to show that $\int_{I_2} |K_N(x,y)| \, \d y = \mathcal{O}(N^{\frac12})$ uniformly for $x \in [1-\delta,1]$. For $x \in [1-\delta,1]$ and $y \in I_2$, we have $y \in [1-2\delta,1]$ so that the asymptotic expression in Theorem \ref{thm:asymptotics} holds. Define the variables
  \begin{equation*}
  \eta = \cos^{-1}\left(\frac{\sin\left(x\frac{\pi}{2\T} \right)}{\sin\left(\frac{\pi}{2\T} \right)}\right), \qquad   \lambda = \cos^{-1}\left(\frac{\sin\left(y\frac{\pi}{2\T} \right)}{\sin\left(\frac{\pi}{2\T} \right)}\right).
  \end{equation*}
  
  Take the asymptotic expressions for $\Pi_N$ in Theorem \ref{thm:asymptotics} for the $x$ and $y$ currently in question, and consider the numerator in the formula for the kernel $K_N(x,y)$ (Lemma \ref{lem:prolatekernelformula}). An asymptotic formula is as follows:
  \begin{eqnarray}
  & & \mathrm{Imag}\left(\overline{e^{-\frac{i\pi}{\T}\frac{N}{2}y} \cdot \Pi_N\left(e^{\frac{i\pi}{T}y}\right)} \cdot e^{-\frac{i\pi}{\T}\frac{N}{2}x} \cdot \Pi_N\left(e^{\frac{i\pi}{T}x}\right) \right) \\
  \nonumber& & \qquad = \frac{1}{2\T} \left(\frac{\pi}{2}N\eta\right)^{\frac12} \left(\frac{\pi}{2}N\lambda\right)^{\frac12}\\
  & & \qquad \qquad \cdot\Bigg( \left(\frac{\sin\left((1+x)\frac{\pi}{2\T}\right)}{\sin\left((1-x)\frac{\pi}{2\T}\right)}\right)^{\frac14} J_0(N\eta) \left(\frac{\sin\left((1-y)\frac{\pi}{2\T}\right)}{\sin\left((1+y)\frac{\pi}{2\T}\right)}\right)^{\frac14}J_1\left(N\lambda\right) \\
  \nonumber& & \qquad \qquad\qquad - \left(\frac{\sin\left((1-x)\frac{\pi}{2\T}\right)}{\sin\left((1+x)\frac{\pi}{2\T}\right)}\right)^{\frac14} J_1(N\eta) \left(\frac{\sin\left((1+y)\frac{\pi}{2\T}\right)}{\sin\left((1-y)\frac{\pi}{2\T}\right)}\right)^{\frac14} J_0\left(N\lambda\right)  \Bigg)  + \mathcal{O}(1) \\
 \nonumber & & \qquad = N^{\frac12}\frac{\pi}{4\T} \\
  & & \qquad \cdot\Bigg(\left(\frac{\eta^2\sin\left((1+x)\frac{\pi}{2\T}\right)}{\sin\left((1-x)\frac{\pi}{2\T}\right)}\right)^{\frac14} J_0(N\eta) \left(\frac{\sin\left((1-y)\frac{\pi}{2\T}\right)}{\sin\left((1+y)\frac{\pi}{2\T}\right)}\right)^{\frac14}  \left(N\lambda\right)^{\frac12}J_1\left(N\lambda\right) \\
 \nonumber & & \qquad \quad - \left(\frac{\sin\left((1-x)\frac{\pi}{2\T}\right)}{\sin\left((1+x)\frac{\pi}{2\T}\right)}\right)^{\frac14} \left(N\eta\right)^{\frac12}J_1\left(N\eta\right)\left(\frac{\lambda^2\sin\left((1+y)\frac{\pi}{2\T}\right)}{\sin\left((1-y)\frac{\pi}{2\T}\right)}\right)^{\frac14}  J_0(N\lambda) \Bigg) + \mathcal{O}(1).
  \end{eqnarray}
  This was an important step in the proof, because there was cancellation when we took the imaginary part. This cancellation is essential for the result to hold, and it is the reason for deriving and including a fully explicit description of the leading order asymptotics of the polynomials in Appendix \ref{A:legendre}.
  
 We will now proceed to find upper bounds on the resulting expression. We showed in the proof of Theorem \ref{thm:asymptotics} that,
  \begin{equation*}
  \left(\frac{\eta^2\sin\left((1+x)\frac{\pi}{2\T}\right)}{\sin\left((1-x)\frac{\pi}{2\T}\right)}\right)^{\frac14} = \mathcal{O}(1),
  \end{equation*}
  uniformly for all $x \in [0,1]$. The same is true when $x$ and $\eta$ are replaced by $y$ and $\lambda$.

  It is straightforward to also show that $(1-y)\frac{\pi}{2\T} \leq \sin\left(\frac{\pi}{2\T} \right)\lambda^2$ for $y \in [0,1]$ and $\lambda \in \left[0,\frac{\pi}{2}\right]$. From this, we have that for $y \in I_2$, $\lambda \geq \sqrt{\frac{\pi}{2\T N}}$. Combining this with the fact that for $t \to \infty$, $J_\alpha(t) = \mathcal{O}\left(t^{-\frac12}\right)$ (see \cite[Eq.~10.17.3]{NIST:DLMF}) we get that $J_0(N\lambda) = \mathcal{O}\left( N^{-\frac14}\right)$. 
  
  Note also that Bessel functions are uniformly bounded in absolute value by $1$ (see \cite[Eq.~10.14.1]{NIST:DLMF}). Furthermore, as $t \to \infty$, $t^{\frac12}J_\alpha(t) = \mathcal{O}(1)$ (see \cite[Eq.~10.17.3]{NIST:DLMF}). Collecting the bounds mentioned in the last three paragraphs, we conclude that for $y \in I_2$, we have,
  \begin{equation}\label{eqn:yI2bound}
  \mathrm{Imag}\left(\overline{e^{-\frac{i\pi}{\T}\frac{N}{2}y} \cdot \Pi_N\left(e^{\frac{i\pi}{T}y}\right)} \cdot e^{-\frac{i\pi}{\T}\frac{N}{2}x} \cdot \Pi_N\left(e^{\frac{i\pi}{T}x}\right) \right) = \mathcal{O}(N^{\frac12})J_0(N\eta)(1-y)^{\frac14} + \mathcal{O}(N^{\frac14}).
  \end{equation}
  
  To conclude, we prove two refinements of equation \eqref{eqn:yI2bound}, depending on whether $x \in [1-\delta,1-N^{-1}]$ or $x \in [1-N^{-1},1]$. When $x \in [1-\delta,1-N^{-1}]$, we have $J_0(N\eta) = \mathcal{O}(N^{-\frac14})$ (just like for $y \in I_2$ discussed above), and so,
  \begin{equation*}
  \mathrm{Imag}\left(\overline{e^{-\frac{i\pi}{\T}\frac{N}{2}y} \cdot \Pi_N\left(e^{\frac{i\pi}{T}y}\right)} \cdot e^{-\frac{i\pi}{\T}\frac{N}{2}x} \cdot \Pi_N\left(e^{\frac{i\pi}{T}x}\right) \right) =  \mathcal{O}(N^{\frac14}).
  \end{equation*}
  This implies that $K_N(x,y) = \mathcal{O}\left( \frac{N^{\frac14}}{|x-y|} \right)$ for $x \in [1-\delta,1-N^{-1}]$ and $y\in I_2$. Therefore, 
  \begin{equation*}
  \int_{I_2} |K_N(x,y)| \, \d y \leq \mathcal{O}(N^{\frac14}) \int_{I_2} \frac{1}{|x-y|} \, \d y \leq \mathcal{O}(N^{\frac14}) \int_{N^{-1}}^{\delta} \frac{1}{s} \, \d s = \mathcal{O}(N^{\frac14}\log(N)).
  \end{equation*}
  Finally, when $x \in [1- N^{-1},1]$ and $y \in I_2$, we have $1-y = x-y + 1-x \leq x - y + N^{-1}$ (since $x \geq y$). By concavity of the function $t \mapsto |t|^{\frac14}$ at $t = x-y > 0$, we have
  \begin{equation*}
  (x-y + N^{-1})^{\frac14} \leq (x-y)^{\frac14} + \frac14 N^{-1}(x-y)^{-\frac34}.
  \end{equation*}
Substituting this bound into equation \eqref{eqn:yI2bound}, we get,
  \begin{eqnarray*}
    K_N(x,y) &=& \mathcal{O}\left( \frac{N^{\frac12}(1-y)^{\frac14}}{|x-y|} \right)\\ 
          &=& \mathcal{O}\left(N^{\frac12}|x-y|^{-\frac34}\right) + \mathcal{O}\left(N^{-\frac12}|x-y|^{-\frac74}\right)
  \end{eqnarray*}
The integral is bounded in the predictable manner as follows:
  
  \begin{eqnarray*}
  \int_{I_2} |K_N(x,y)| \,\d y &=& \mathcal{O}(N^{\frac12})\int_{I_2} |x-y|^{-\frac34}\,\d y + \mathcal{O}(N^{-\frac12})\int_{I_2} |x-y|^{-\frac74} \,\d y \\
                         &\leq& \mathcal{O}(N^{\frac12}) + \mathcal{O}(N^{-\frac12})\int_{N^{-1}}^1 s^{-\frac74} \, \d s \\
                         &=&\mathcal{O}(N^{\frac12}) + \mathcal{O}(N^{-\frac12} \cdot N^{\frac34}) \\
                         &=& \mathcal{O}(N^{\frac12}).
  \end{eqnarray*}
  Since this covers all $x \in [-1,1]$ with finitely many uniform $\mathcal{O}(N^{\frac12})$ upper bounds, we have the final result uniformly for all $x \in [-1,1]$.
\end{proof}

\section{Best uniform approximation by Fourier extensions}\label{sec:bestuniform}

We will compare best uniform approximation in three spaces. For odd positive integer $N = 2n+1$, define,
\begin{eqnarray*}
\Hspace_N &=& \myspan\left\{e^{\frac{i\pi}{\T}kx}\right\}_{k=-n}^n \subset C([-1,1]), \\
\mathcal{T}_N &=& \myspan\left\{e^{\frac{i\pi}{\T}kx}\right\}_{k=-n}^n \subset C_{\per}([-T,T]), \\
\mathcal{P}_N &=& \myspan\left\{x^k\right\}_{k=0}^{2n} \subset C([-1,1]).
\end{eqnarray*}
We will see that best uniform approximation by Fourier extensions is more similar to that of algebraic polynomials. The best approximation error functionals for these spaces are defined by
\begin{eqnarray*}
E(f;\Hspace_N) &=& \inf_{r_N \in \Hspace_N}\|f-r_N\|_{L^\infty(-1,1)} \quad\text{ for all } f \in C([-1,1]), \\
E(g;\mathcal{T}_N) &=& \inf_{t_N \in \mathcal{T}_N}\|g-t_N\|_{L^\infty(-T,T)} \quad\text{ for all } g \in C_{\per}([-T,T]), \\
E(h;\mathcal{P}_N) &=& \inf_{p_N \in \mathcal{P}_N}\|h-p_N\|_{L^\infty(-1,1)} \quad \text{ for all } h \in C([-1,1]).
\end{eqnarray*}
We wish to find bounds in terms of $N$ and the regularity of the functions to be approximated.

For $f \in C([-1,1])$ the \emph{modulus of continuity} is defined by \cite{devore1993constructive,mhaskar2000fundamentals},
\begin{equation}\label{eqn:modulusofcty}
\omega(f;\delta) = \sup_{\substack{x,y \in[-1,1]\\|x-y|<\delta}} |f(x) - f(y)|.
\end{equation}
For $g \in C_{\per}([-T,T])$ we define the periodic modulus of continuity to be,
\begin{equation*}
\omega_{\per}(g;\delta) = \sup_{\substack{x,y \in [-T,T]\\d_T(x,y)<\delta}} |g(x) - g(y)|,
\end{equation*}
where $d_T(x,y)$ is the distance between $x,y$ as elements of the periodic interval $[-T,T]$. The following results are immediate.
\begin{lemma}\label{lem:holdermodulus}
  If $f$ is in the Holder space $C^\alpha([-1,1])$ for $\alpha \in [0,1]$, then $\omega(f;\delta) \leq \delta^\alpha |f|_{C^\alpha([-1,1])}$ for all $\delta > 0$. 
\end{lemma}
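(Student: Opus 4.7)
The plan is to unwind the two definitions and apply the obvious pointwise bound. By definition of the H\"older seminorm, for every pair $x,y \in [-1,1]$ with $x \neq y$ we have
\begin{equation*}
|f(x) - f(y)| \leq |f|_{C^\alpha([-1,1])} \, |x-y|^\alpha.
\end{equation*}
Restricting attention to the pairs with $|x-y| < \delta$, the factor $|x-y|^\alpha$ is bounded by $\delta^\alpha$ (since $\alpha \geq 0$ and $t \mapsto t^\alpha$ is nondecreasing on $[0,\infty)$), and the case $x = y$ is trivial. Taking the supremum over all such pairs gives $\omega(f;\delta) \leq \delta^\alpha |f|_{C^\alpha([-1,1])}$.

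There is no real obstacle here; the only thing to verify is that the endpoints $\alpha = 0$ and $\alpha = 1$ are handled correctly, which they are, and that the supremum in $\omega(f;\delta)$ is taken over the same set on which the seminorm controls $|f(x)-f(y)|/|x-y|^\alpha$, which is the case since both quantities are defined on $[-1,1]$. The lemma is purely a book-keeping step intended to feed into Theorem \ref{thm:Jacksonmain} together with the Jackson-type estimate in $\omega$.
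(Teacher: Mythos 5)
Your proof is correct and is exactly the argument the paper has in mind: the paper simply labels this lemma ``immediate'' and gives no further detail, since it follows by unwinding the definitions of the H\"older seminorm and the modulus of continuity and using monotonicity of $t \mapsto t^\alpha$. Nothing further is needed.
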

\begin{lemma}\label{lem:DiniLipschitzmodulus}
  If $f \in C([-1,1])$ is uniformly Dini--Lipschitz \cite{zygmund2002trigonometric}, i.e.
  \begin{equation*}
  \lim_{\delta \searrow 0}\sup_{\substack{x,y \in [-1,1]\\ |x-y|<\delta}} \left|(f(x)- f(y)) \log \delta \right| = 0,
  \end{equation*}
  then $\omega(f;\delta) = o(1 / |\log \delta|)$.
\end{lemma}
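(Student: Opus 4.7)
The plan is to observe that this lemma is essentially a rewording of the hypothesis in the notation of the modulus of continuity, so the proof amounts to a single line of manipulation. The key observation is that the factor $\log\delta$ appearing inside the supremum in the Dini--Lipschitz condition depends only on $\delta$, not on the variables $x$ and $y$ over which the supremum is taken. Hence it can be pulled out of the supremum as $|\log\delta|$ (after absorbing it into the absolute value).

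Concretely, I would write
\begin{equation*}
\sup_{\substack{x,y \in [-1,1] \\ |x-y|<\delta}} \left|(f(x) - f(y)) \log \delta\right|
 = |\log \delta| \cdot \sup_{\substack{x,y \in [-1,1] \\ |x-y|<\delta}} |f(x) - f(y)|
 = |\log \delta| \cdot \omega(f;\delta),
\end{equation*}
using only that $|ab| = |a|\,|b|$ and that $|\log\delta|$ is independent of $x,y$. The uniform Dini--Lipschitz hypothesis asserts that the left-hand side tends to $0$ as $\delta \searrow 0$, so the right-hand side does too. This is precisely the statement $\omega(f;\delta) = o(1/|\log\delta|)$, which completes the proof.

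There is no substantive obstacle here; the lemma is really just a dictionary entry translating the Dini--Lipschitz formulation used in Theorem \ref{thm:DiniLipschitz} into a bound on the modulus of continuity that can be fed directly into the Jackson-type estimate \ref{thm:Jacksonmain}. The only point to be careful about is that $\log\delta < 0$ for small $\delta$, but the absolute value in the Dini--Lipschitz definition takes care of this automatically.
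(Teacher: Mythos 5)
Your proposal is correct and is exactly the intended argument: the paper gives no proof, declaring the lemma immediate, and the one-line manipulation of pulling the $x,y$-independent factor $|\log\delta|$ out of the supremum to identify the Dini--Lipschitz quantity with $|\log\delta|\,\omega(f;\delta)$ is precisely what makes it so.
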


\subsection{A Jackson-type theorem}

The original Jackson Theorem for classical Fourier series asserts that for all $k = 0,1,2,\ldots$ and all functions $g \in C_{\per}^k([-T,T])$, we have
\begin{equation*}
E(g;\mathcal{T}_N) = \mathcal{O}(N^{-k}) \, \omega_{\per}\left(g^{(k)};\frac{1}{N}\right),
\end{equation*}
where the constant in the big $\mathcal{O}$ depends on $k$ and $T$ \cite[Thm.~1.IV]{jackson1930theory}.

There is also a polynomial version of Jackson's Theorem, which states that for all $k = 0,1,2,\ldots$ and all functions $h \in C^k([-1,1])$, we have
\begin{equation}\label{eqn:Jacksonforpolys}
E(h;\mathcal{P}_N) = \mathcal{O}(N^{-k})\, \omega\left(h^{(k)};\frac{1}{N}\right),
\end{equation}
where the constant in the big $\mathcal{O}$ depends only on $k$ \cite[Thm.~1.VIII]{jackson1930theory}. We prove a version of Jackson's Theorem for Fourier extensions.
\begin{theorem}[Jackson-type]\label{thm:Jacksonmain}
  For all $k = 0,1,2,\ldots$ and all functions $f \in C^k([-1,1])$,
  \begin{equation*}
  E(f;\Hspace_N) = \mathcal{O}(N^{-k}) \, \omega\left(f^{(k)};\frac{1}{N}\right),
  \end{equation*}
 where the constant in the big $\mathcal{O}$ depends only on $k$ and $T$.
\end{theorem}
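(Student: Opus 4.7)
My plan is to reduce the Jackson-type theorem for Fourier extensions to the classical Jackson theorem for periodic trigonometric polynomials via a suitable $C^k$-periodic extension. The key observation is that $\mathcal{T}_N$ and $\Hspace_N$ are spanned by the same exponentials $\{e^{i\pi k x/T}\}_{k=-n}^n$, differing only in the domain on which the $L^\infty$-norm is measured. So if $\tilde f \in C^k_{\per}([-T,T])$ is any extension of $f$, then any best trigonometric approximant $t_N \in \mathcal{T}_N$ to $\tilde f$ restricts to an element of $\Hspace_N$, giving
\[
E(f;\Hspace_N) \leq \|\tilde f - t_N\|_{L^\infty(-T,T)} \leq C N^{-k}\,\omega_{\per}(\tilde f^{(k)};1/N)
\]
by the classical trigonometric Jackson theorem cited in the excerpt. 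The whole game therefore reduces to producing an extension whose periodic modulus of continuity of order $k$ is controlled by $\omega(f^{(k)};\delta)$.

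For the extension I would use a Taylor-polynomial-plus-bump-function filling of the two gap intervals $[1,T]$ and $[-T,-1]$. Fix a cutoff $\phi \in C^\infty([1,T])$ with $\phi(1)=1$, $\phi^{(j)}(1)=0$ for all $j\geq 1$, and $\phi$ vanishing identically in a neighbourhood of $T$. On $[1,T]$ set $\tilde f(x)=P_1(x)\phi(x)$ where $P_1(x)=\sum_{j=0}^k f^{(j)}(1)(x-1)^j/j!$ is the $k$th Taylor polynomial of $f$ at $1$, and mirror the construction on $[-T,-1]$. By Leibniz, $\tilde f^{(j)}(1)=f^{(j)}(1)$ for $j=0,\dots,k$, so $\tilde f$ is $C^k$ across the glue point $x=1$; since $\tilde f$ and all its derivatives vanish in neighbourhoods of $\pm T$, $C^k$-periodicity is automatic. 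A short computation, using that $P_1\phi$ is $C^\infty$ with all derivatives of order $\leq k+1$ controlled by $\|f\|_{C^k([-1,1])}$, yields
\[
\omega_{\per}(\tilde f^{(k)};\delta) \leq C\bigl(\omega(f^{(k)};\delta) + \|f\|_{C^k([-1,1])}\,\delta\bigr),
\]
with $C$ depending only on $k$ and $T$; substituting $\delta = 1/N$ gives the desired bound modulo an $\mathcal{O}(N^{-k-1}\|f\|_{C^k})$ remainder.

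The hardest step is the modulus-of-continuity estimate across the join at $x=1$: for $x<1<y$ with $|x-y|<\delta$ one splits
\[
|\tilde f^{(k)}(x)-\tilde f^{(k)}(y)| \leq |f^{(k)}(x)-f^{(k)}(1)| + |(P_1\phi)^{(k)}(1)-(P_1\phi)^{(k)}(y)|,
\]
bounding the first by $\omega(f^{(k)};\delta)$ and the second by a Lipschitz estimate for the smooth filler, and this is the only place the spurious $\|f\|_{C^k}\delta$ arises. For the downstream application in Theorem \ref{thm:Linftyalgebraic}, the extra term $\mathcal{O}(N^{-k-1}\|f\|_{C^k})$ is harmless because it is dominated by $N^{-k-\alpha}|f^{(k)}|_{C^\alpha([-1,1])}$ for every $\alpha\in[0,1]$, so combining with the Hölder bound $\omega(f^{(k)};1/N) \leq N^{-\alpha}|f^{(k)}|_{C^\alpha}$ from Lemma \ref{lem:holdermodulus} still delivers the advertised rate $\mathcal{O}(N^{-k-\alpha})$. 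A conceivable alternative route would decompose $f$ into its even and odd parts and use the substitution $t=\cos(\pi x/T)$ to reduce to polynomial approximation on $[\cos(\pi/T),1]$ where polynomial Jackson \eqref{eqn:Jacksonforpolys} applies directly, but the regularity bookkeeping under this 2-to-1 map is at least as delicate as the extension construction above.
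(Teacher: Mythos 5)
Your overall route is the same as the paper's: extend $f$ to a $2T$-periodic $C^k$ function $g$, invoke the classical trigonometric Jackson theorem on $[-T,T]$, and restrict the best approximant in $\mathcal{T}_N$ to $[-1,1]$ to get $E(f;\Hspace_N)\leq E(g;\mathcal{T}_N)$. The difference is the extension, and this is where your argument falls short of the stated theorem. The paper's Lemma~\ref{lem:extension} is engineered so that $\omega_{\per}(g^{(k)};\delta)\leq \frac{T}{T-1}\,\omega(f^{(k)};\delta)$ with \emph{no} additive remainder: for $k=0$ it uses a linear filler whose slope $|f(1)-f(-1)|/(2(T-1))$ is controlled by the telescoping bound $|f(1)-f(-1)|\leq 2m\,\omega(f;1/m)$ with $m\approx 1/\delta$, so the filler's oscillation over a $\delta$-window is again $\mathcal{O}(\omega(f;\delta))$. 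Your Taylor-polynomial-times-cutoff filler cannot achieve this: its $k$th derivative is Lipschitz with constant of order $\|f\|_{C^k}$ rather than $\omega(f^{(k)};\delta)/\delta$, which is exactly the source of your extra $\|f\|_{C^k}\,\delta$ term. Consequently you prove $E(f;\Hspace_N)=\mathcal{O}(N^{-k})\bigl(\omega(f^{(k)};1/N)+N^{-1}\|f\|_{C^k}\bigr)$, which is strictly weaker than Theorem~\ref{thm:Jacksonmain} as written (the two are not comparable when $\omega(f^{(k)};1/N)=o(1/N)$). Also, your claim that the remainder is ``dominated by $N^{-k-\alpha}|f^{(k)}|_{C^\alpha}$'' is not literally true, since $\|f\|_{C^k}$ is not controlled by the seminorm $|f^{(k)}|_{C^\alpha}$; what survives is the \emph{rate} $N^{-k-\alpha}$ in Theorem~\ref{thm:Linftyalgebraic}, but with a constant proportional to the full norm $\|f\|_{C^{k,\alpha}}$ rather than the seminorm appearing in that theorem's statement.

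Having said that, your instinct to tolerate the extra term is sound, because the clean bound of Theorem~\ref{thm:Jacksonmain} cannot hold as stated for $k\geq 1$: take $f(x)=x$, so that $\omega(f';\delta)=0$, while $E(f;\Hspace_N)>0$ because a finite combination of $e^{i\pi kx/T}$ is bounded and $2T$-periodic on $\mathbb{R}$ and so cannot agree with $x$ on an interval. Correspondingly, the $k\geq1$ case of the paper's Lemma~\ref{lem:extension} --- an extension whose $k$th derivative is the linear interpolant of $f^{(k)}(\pm1)$ on the gap while simultaneously matching $f^{(j)}(\pm1)$ for all $j<k$ and closing up periodically --- is overdetermined and does not exist in general. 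So the weaker inequality you derive is the correct statement, and the gap is as much in the target as in your proof; but as a verification of the theorem in the form the paper asserts it, your argument does not close, and you should either adopt the paper's telescoping device (which genuinely works for $k=0$) or record explicitly that the conclusion must carry the additional $\mathcal{O}(N^{-k-1}\|f\|_{C^k})$ term.
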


\begin{lemma}[Periodic extension]\label{lem:extension}
  Let $f\in C^k([-1,1])$. Then $f$ can be extended to a function $g\in C_{\per}^k([-T,T])$ such that \[\omega_{\per}(g^{(k)};\delta) \leq \frac{T}{T-1}\omega(f^{(k)};\delta).\]
  \begin{proof}
    First let $k=0$. Define the function $g \in C_{\per}([-T,T])$ such that for $x\in[-1,1]$, $g(x) = f(x)$ and for $x \in[-T,T]\backslash[-1,1]$, $g(x)$ is the the linear function which interpolates $f$ at $\{-1,1\}$. We distinguish between 4 different cases for points $x,y \in [-T,T]$ such that $d_T(x,y) \leq \delta$: $(i)$ if $x,y\in[-1,1]$, then \[|g(x)-g(y)|=|f(x)-f(y)| \leq \omega(f;\delta);\] $(ii)$ if $x,y\in[-T,T]\backslash[-1,1]$, then since $g$ is linear in this region,\[ |g(x)-g(y)|\leq\frac{|f(1)-f(-1)|}{2(T-1)}\delta;\] $(iii)$ if $x\in[-1,1],y\in[-T,T]\backslash[-1,1]$ then \[ |g(x)-g(y)|\leq |f(\xi)-f(x)|+|g(y)-g(\xi)|\leq \omega(f,\delta)+\frac{|f(1)-f(-1)|}{2(T-1)}\delta ,\] where $\xi$ is the closest of the endpoints to $x$; and (iv) if $x\in[-T,T]\backslash[-1,1],y\in[-T,T]$ the bound is similar to the previous one. Now it remains to bound $|f(1)-f(-1)|$ in terms of $\omega(f;\delta)$. For any positive integer $m$, we can use a telescoping sum,
    \begin{align*}
    |f(1)-f(-1)|\leq \sum_{k=0}^{2m-1}\left|f\left(-1+\frac{k}{m}\right)-f\left(-1+\frac{k+1}{m}\right)\right|\leq 2m\omega\left(f;\frac{1}{m}\right)
    \end{align*}
    It suffices to take $m>1/\delta$ to show that $ \frac{|f(1)-f(-1)|}{2(T-1)}\delta\leq \frac{1}{T-1}\omega(f;\delta)$. Combining all four cases demonstrates $\omega_{\per}(g,\delta)\leq \frac{T}{T-1}\omega(f,\delta)$. 
    
    Now let $k>0$ and choose as extension of $f$ the $2(k+1)$th degree Hermite interpolant in the points $x=1$, and $x=-1$; then $g^{(k)}(x)$ is the linear interpolation between $f^{(k)}(1)$ and $f^{(k)}(-1)$ for $x\in[-T,T]\backslash[-1,1]$. By the case $k=0$ proved above, $\omega_{\per}(g^{(k)};\delta)\leq \frac{T}{T-1}\omega(f^{(k)};\delta)$.
  \end{proof}
\end{lemma}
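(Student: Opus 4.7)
The plan is to handle the base case $k = 0$ directly by linear interpolation on the complementary arc, and to reduce the general case $k \geq 1$ to the base case by an appropriate extension whose $k$th derivative is a linear interpolant on that arc.

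For $k = 0$, I would set $g = f$ on $[-1,1]$ and define $g$ on the arc $[-T,-1] \cup [1,T]$ (viewed as a single interval of length $2(T-1)$ via the periodic identification $-T \sim T$) to be the unique affine function connecting $f(1)$ at $x=1$ to $f(-1)$ at $x=-1$. This choice automatically makes $g$ continuous and periodic. To bound $\omega_{\per}(g;\delta)$, I would split into cases for $x,y$ with $d_T(x,y) < \delta$: both in $[-1,1]$ (bounded directly by $\omega(f;\delta)$), both on the arc (bounded by the arc slope times $|x-y|$), and the mixed case (handled by the triangle inequality through the closest of $\pm 1$).

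The technical heart of the $k=0$ case is controlling the arc slope $|f(1)-f(-1)|/(2(T-1))$ by $\omega(f;\delta)$. I would use a telescoping argument: for any positive integer $m$, subdividing $[-1,1]$ into $2m$ equal pieces and applying the triangle inequality yields $|f(1)-f(-1)| \leq 2m\,\omega(f;1/m)$. Taking $m$ just larger than $1/\delta$ so that $1/m \leq \delta$ and hence $\omega(f;1/m) \leq \omega(f;\delta)$ by monotonicity, the arc contribution over a segment of length $\delta$ is bounded by $\omega(f;\delta)/(T-1)$. Combining with the interior case yields $\omega_{\per}(g;\delta) \leq (T/(T-1))\omega(f;\delta)$.

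For $k \geq 1$, I would extend $f$ to $g$ on the arc by a construction that simultaneously (i) makes $g$ lie in $C^k_{\per}([-T,T])$, i.e. matches $f^{(j)}(\pm 1)$ for all $0 \leq j \leq k$ at the joining points, and (ii) makes $g^{(k)}$ on the arc equal to the linear interpolant between $f^{(k)}(1)$ and $f^{(k)}(-1)$. Given such $g$, the $k=0$ estimate applied to $f^{(k)}$ and $g^{(k)}$ delivers the desired bound directly. The main obstacle I foresee is exactly this construction: prescribing $g^{(k)}$ as affine on the arc while matching $2(k+1)$ endpoint conditions is potentially overdetermined for a single polynomial of minimal degree, so I expect to need either a higher-degree Hermite interpolant combined with a smooth corrector whose $k$th derivative vanishes at $\pm 1$ (and is supported away from $\pm T$ so as to preserve the affine form needed at the identification point), or a piecewise construction on $[1,T]$ and $[-T,-1]$ separately with matching conditions at $\pm T$ enforcing $C^k$ periodicity.
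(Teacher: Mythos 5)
Your $k=0$ argument is exactly the paper's: linear interpolation on the complementary arc, the four-case analysis through the nearest endpoint, and the telescoping bound $|f(1)-f(-1)|\le 2m\,\omega(f;1/m)$ with $m>1/\delta$. That part is complete and correct. For $k\ge 1$ the paper simply takes the degree-$(2k+1)$ Hermite interpolant matching $f^{(j)}(\pm1)$ for $0\le j\le k$ and asserts that its $k$th derivative is the affine interpolant of $f^{(k)}(\pm1)$ on the arc. The obstacle you flag is therefore not an artifact of your approach but a genuine gap in the paper's own proof: requiring $g^{(k)}$ affine forces $\deg g\le k+1$, i.e.\ $k+2$ coefficients against $2k+2$ interpolation conditions, so the two requirements are incompatible for $k\ge1$ (and the $k$th derivative of the actual Hermite interpolant has degree $k+1$, not $1$).

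More importantly, neither of your proposed repairs (a smooth corrector, or a piecewise construction with matching at $\pm T$) can close the gap, because the inequality as stated is false for $k\ge1$. Take $f(x)=x$ and $k=1$: then $\omega(f';\delta)=0$ for every $\delta$, so the lemma would force $\omega_{\per}(g';\delta)=0$, hence $g'$ constant on the circle and equal to $1$; but periodicity gives $\int_{-T}^{T}g'=0$, a contradiction. The structural reason is precisely the moment constraints your piecewise construction would have to meet: $g^{(k)}$ on the arc must not only attain $f^{(k)}(\pm1)$ at its ends but also have prescribed iterated integrals so that $g^{(j)}(\pm1)=f^{(j)}(\pm1)$ for $j<k$, and those moments are of size $\|f\|_{C^k([-1,1])}$, not of size $\omega(f^{(k)};\cdot)$. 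Any valid extension must therefore absorb a correction of that magnitude, and the best one can achieve is an estimate of the form $\omega_{\per}(g^{(k)};\delta)\le C_{T,k}\bigl(\omega(f^{(k)};\delta)+\delta\,\|f\|_{C^k([-1,1])}\bigr)$, obtained by superposing on the affine interpolant a fixed smooth correction whose derivative is $O(\|f\|_{C^k})$ on the arc. This weaker bound still delivers Theorem \ref{thm:Jacksonmain} up to an additional $O(N^{-k-1})\|f\|_{C^k}$ term, which is dominated by $O(N^{-k-\alpha})$ in the downstream H\"older applications, but if you want a correct proof you must prove this corrected statement rather than the one asserted.
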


\begin{proof}[Proof of Theorem \ref{thm:Jacksonmain}]
  Let $f \in C^k([-1,1])$. By Lemma \ref{lem:extension}, this function can be extended to a function $g \in C_{\per}^k([-T,T])$ such that $\omega_{\per}(g^{(k)};\delta)$ is bounded by $\frac{T}{T-1}\omega(f^{(k)};\delta)$. Let $t_N \in \mathcal{T}_N$ be the best uniform approximation to $g$, then (trivially) there exists a function $r_N \in \Hspace_N$ such that $r_N(x) = t_N(x)$ for all $x \in [-1,1]$. Hence,
  \begin{equation*}
  E(f;\Hspace_N) \leq \|f - r_N\|_{L^\infty(-1,1)} \leq \|g - t_N\|_{L^\infty(-T,T)} = E(g;\mathcal{T}_N).
  \end{equation*}
  The original Jackson Theorem can now be used to bound $E(g;\mathcal{T}_N)$:
  \begin{equation*}
  E(g;\mathcal{T}_N) = \mathcal{O}(N^{-k}) \, \omega_{\per}(g^{(k)};\delta) \leq \mathcal{O}(N^{-k}) \,\omega(f^{(k)};\delta).
  \end{equation*}
  This proves the result.
\end{proof}

The combination of Lemma \ref{lem:holdermodulus} and Theorem \ref{thm:Jacksonmain} yields the following useful fact. If $f \in C^{k,\alpha}([-1,1])$ for $k \geq 0$ and $\alpha \in [0,1]$, then
\begin{equation*}
E(f;\Hspace_N) = \mathcal{O}(N^{-k-\alpha}) \, |f^{(k)}|_{C^\alpha([-1,1])}.
\end{equation*}
In the sequel we will see that this is not actually tight, in the sense that functions in $C^{k,\alpha}([-1,1])$ can see a decay of best approximation error with a rate faster than $N^{-k-\alpha}$. This is in contrast to the situation for classical Fourier series in which it is indeed tight (see Theorem \ref{thm:JacksonBernstein}).

\subsection{A Bernstein-type theorem}

While Jackson-type theorems bound the best approximation error functional by powers of $N$ and moduli of continuity of derivatives, Bernstein-type theorems attempt to do the opposite. 

Bernstein-type theorems follow from Bernstein inequalities. For classical Fourier series, the Bernstein inequality is,
\begin{equation}\label{eqn:BernsteinFourier}
\|t_N'\|_\infty \leq \frac{\pi}{\T}n \|t_N\|_\infty,
\end{equation}
for all $t_N \in \mathcal{T}_N$, where $N = 2n+1$ \cite[Ch.~4, Th.~2.4]{devore1993constructive}. Equality holds when $t_N(x) \propto e^{\pm\frac{i\pi}{\T}nx}$. From Bernstein's inequality it is possible to show that there exists $C_T >0$ such that \cite[Ch.~7, Thm.~3.1]{devore1993constructive},
\begin{equation*}
\omega_{\per}\left(g;\frac{1}{N}\right) \leq \frac{C_T}{n}\sum_{\substack{k=3 \\ k \text{ odd}}}^N E(g;\mathcal{T}_{k}).
\end{equation*}
Now, this is not precisely a converse to Jackson's Theorem, but it implies the following tightness property.
\begin{theorem}[Jackson--Bernstein {\cite[Ch.~7, Thm.~3.3]{devore1993constructive}}]\label{thm:JacksonBernstein}
  Let $g \in C_{\per}([-T,T])$ and $\alpha \in (0,1)$. It holds that
  \begin{equation*}
  E(g;\mathcal{T}_N) = \mathcal{O}(N^{-\alpha}) \iff \omega_{\per}\left(g;\delta \right) = \mathcal{O}(\delta^{\alpha}).
  \end{equation*}
\end{theorem}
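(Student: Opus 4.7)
The plan is to prove the two directions separately, with the forward direction ($\Leftarrow$) being an immediate application of Jackson's theorem, and the reverse direction ($\Rightarrow$) exploiting the summation inequality that is quoted immediately before the theorem statement. The essential constraint $\alpha < 1$ will appear naturally when estimating a certain divergent sum.

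For the ($\Leftarrow$) direction, I would start by applying Jackson's theorem for classical Fourier series (equation cited earlier in the text with $k=0$): for any $g \in C_{\per}([-T,T])$,
\begin{equation*}
E(g;\mathcal{T}_N) = \mathcal{O}(\omega_{\per}(g;1/N)).
\end{equation*}
Under the hypothesis $\omega_{\per}(g;\delta) = \mathcal{O}(\delta^\alpha)$, the right-hand side becomes $\mathcal{O}(N^{-\alpha})$, which is the desired bound.

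For the ($\Rightarrow$) direction, I would invoke the summation inequality from Bernstein's inequality that the author states just before the theorem,
\begin{equation*}
\omega_{\per}(g;1/N) \leq \frac{C_T}{n}\sum_{\substack{k=3\\k \text{ odd}}}^N E(g;\mathcal{T}_k),
\end{equation*}
and substitute the hypothesis $E(g;\mathcal{T}_k) \leq C' k^{-\alpha}$. The resulting sum $\sum_{k=3}^N k^{-\alpha}$ behaves like $\mathcal{O}(N^{1-\alpha})$ precisely because $\alpha \in (0,1)$; dividing by $n \sim N/2$ gives $\omega_{\per}(g;1/N) = \mathcal{O}(N^{-\alpha})$. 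To promote this estimate from the discrete values $\delta = 1/N$ to arbitrary $\delta \in (0,1]$, I would use monotonicity of the modulus of continuity: given $\delta$, pick an odd $N$ with $1/N \leq \delta < 2/N$, so that $\omega_{\per}(g;\delta) \geq \omega_{\per}(g;1/N)$ is reversed to $\omega_{\per}(g;\delta) \leq \omega_{\per}(g;2/N) \leq 2\,\omega_{\per}(g;1/N)$ by subadditivity, yielding the required $\mathcal{O}(\delta^\alpha)$ bound.

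The main obstacle is really a bookkeeping one: ensuring the parity condition ($N$ odd, $N=2n+1$) is handled in the summation bound without loss, and verifying that the constant $C_T$ from the Bernstein-derived summation inequality is indeed uniform over $N$. The substantive content, namely the summation inequality itself, is a classical consequence of Bernstein's inequality \eqref{eqn:BernsteinFourier} combined with a telescoping argument over dyadic blocks of best approximants, and I would take it as given from the reference \cite[Ch.~7, Thm.~3.1]{devore1993constructive}. Note that the restriction $\alpha < 1$ is sharp: if $\alpha = 1$, the sum $\sum_k k^{-1}$ produces a logarithmic factor, reflecting the classical saturation phenomenon that $E(g;\mathcal{T}_N) = \mathcal{O}(N^{-1})$ does not characterize Lipschitz functions but a strictly smaller Zygmund class.
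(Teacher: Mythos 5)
Your proof is correct, and it follows exactly the route the paper indicates: the paper does not prove this theorem itself but cites it, presenting Jackson's theorem and the Bernstein-derived summation inequality $\omega_{\per}(g;1/N) \leq \frac{C_T}{n}\sum_{k \text{ odd}, k\leq N} E(g;\mathcal{T}_k)$ as the two ingredients from which it follows. Your assembly of these ingredients (Jackson for $\Leftarrow$; summing $k^{-\alpha}$ to get $\mathcal{O}(N^{1-\alpha})$, dividing by $n$, and extending from $\delta = 1/N$ to general $\delta$ by subadditivity for $\Rightarrow$) is the standard and correct argument, including the observation that $\alpha<1$ is what keeps the sum from producing a logarithm.
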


The direct analogue of Theorem \ref{thm:JacksonBernstein} for best uniform approximation by algebraic polynomials in $C([-1,1])$ is not true. Indeed, consider the function $h(x) = (1-x^2)^\alpha$, whose modulus of continuity satisfies $\omega(h;\delta) = \mathcal{O}(\delta^{\alpha})$ by Lemma \ref{lem:holdermodulus}. Define the function $g(\theta) = h\left(\cos\left(\theta\right)\right) = \left|\sin\left(\theta \right)\right|^{2\alpha}$ for $\theta \in [-\pi,\pi]$. If $\alpha < \frac12$ then $g \in C^{2\alpha}([-\pi,\pi])$, so $E(g;\mathcal{T}_N) = \mathcal{O}(N^{-2\alpha})$ by Theorem \ref{thm:JacksonBernstein}. Furthermore, the best approximations will be even since $g$ is even, so the approximants are in fact polynomials in $\cos(\theta)$. This implies that the best approximations to $h$ are polynomials in $x$, showing that $E(h;\mathcal{P}_N) = \mathcal{O}(N^{-2\alpha})$, twice as good as would be expected from Jackson's Theorem for algebraic polynomials (equation \eqref{eqn:Jacksonforpolys}).

 It was only in the late twentieth century that characterizations of functions $h\in C([-1,1])$ for which $E(h;\mathcal{P}_N) = \mathcal{O}(N^{-\alpha})$ were developed \cite[Ch.~8]{devore1993constructive}. The key insight is to use \emph{weighted} moduli of continuity. The weighted modulus of continuity with weight $\phi : [-1,1] \to [0,\infty)$ for a function $f \in C([-1,1])$ is defined as
\begin{equation*}
\omega_{\phi}(f;\delta) = \sup_{\substack{x\pm h \in[-1,1]\\0 \leq h < \phi(x)\delta}} |f(x+h) - f(x-h)|.
\end{equation*}
Taking the weight $\phi(x) = \frac12$ returns the standard modulus of continuity in equation \eqref{eqn:modulusofcty}. 

It turns out that if this weighted modulus of continuity is used with $\phi(x) = \sqrt{1-x^2}$, then there is a direct analogue of Theorem \ref{thm:JacksonBernstein} for best uniform approximation by algebraic polynomials.
\begin{theorem}[Ditzian--Totik {\cite[Cor.~7.2.5]{ditzian1987moduli}}]\label{thm:Ditzian-Totik}
  Let $h \in C([-1,1])$ and $\alpha \in (0,1)$. It holds that
  \begin{equation*}
  E(h;\mathcal{P}_N) = \mathcal{O}(N^{-\alpha}) \iff \omega_{\phi}\left(h ;\delta \right) = \mathcal{O}(\delta^{\alpha}),
  \end{equation*}
  where $\phi(x) = \sqrt{1-x^2}$.
\end{theorem}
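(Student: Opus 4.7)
The plan is to reduce the Ditzian--Totik theorem to the already-established trigonometric Jackson--Bernstein theorem (Theorem \ref{thm:JacksonBernstein}) via the substitution $x = \cos\theta$. Given $h \in C([-1,1])$, define $g \in C_{\per}([-\pi,\pi])$ by $g(\theta) = h(\cos\theta)$. Then $g$ is continuous, even, and $2\pi$-periodic. The weight $\phi(x) = \sqrt{1-x^2} = |\sin\theta|$ under this substitution is precisely the Jacobian factor $|\mathrm{d}x/\mathrm{d}\theta|$, which is the reason the weighted modulus on $[-1,1]$ should correspond to the unweighted periodic modulus of $g$ on $[-\pi,\pi]$.

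First I would establish that $E(h;\mathcal{P}_N) = E(g;\mathcal{T}_N)$. The inequality $E(g;\mathcal{T}_N) \leq E(h;\mathcal{P}_N)$ is immediate because for any $p \in \mathcal{P}_N$, the function $\theta \mapsto p(\cos\theta)$ is an even trigonometric polynomial of degree at most $N$. For the other direction, given the best trigonometric approximation $t_N$ of $g$, symmetrizing to $\tilde t_N(\theta) = \tfrac12(t_N(\theta) + t_N(-\theta))$ does not increase the error since $g$ is even, and every even trigonometric polynomial of degree $N$ is of the form $p(\cos\theta)$ for some $p \in \mathcal{P}_N$ by the Chebyshev correspondence.

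Next I would prove the two-sided equivalence of moduli,
\begin{equation*}
c_1 \, \omega_{\per}(g;\delta) \;\leq\; \omega_\phi(h;\delta) \;\leq\; c_2 \, \omega_{\per}(g;\delta),
\end{equation*}
for all sufficiently small $\delta > 0$. The key trigonometric identity is
\begin{equation*}
\cos(\theta-s) - \cos(\theta+s) = 2\sin\theta \sin s,
\end{equation*}
so that with $x = \cos\theta$ and step size $k = \sin\theta \sin s$, the symmetric difference $h(x+k) - h(x-k)$ is exactly $g(\theta - s) - g(\theta + s)$, which is controlled by $\omega_{\per}(g; 2s)$. This direct algebraic identification, together with $k \leq \phi(x)\delta$ corresponding to $\sin s \leq \delta$, supplies the upper bound. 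The lower bound is obtained by running the same correspondence in reverse. Combining the two equivalences with Theorem \ref{thm:JacksonBernstein} applied to $g$ then yields the stated iff.

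The main obstacle will be the modulus equivalence, and specifically the behaviour near the endpoints $x = \pm 1$ where $\phi$ degenerates. The constraint $k \leq \phi(x)\delta$ forces the admissible step in the weighted modulus to shrink like $\sqrt{1-x^2}\,\delta$ near the boundary, and one must check that this shrinking window accesses exactly the same information as a uniform shift by $\delta$ in $\theta$. A careful splitting into an interior region $\theta \in [\kappa, \pi-\kappa]$, where $\sin\theta$ is bounded below, and corner regions $\theta \in [0,\kappa] \cup [\pi-\kappa,\pi]$, where one exploits the identity $1 - \cos\theta \asymp \theta^2$ to match scales, is the standard way to handle this. The rest of the argument is then a direct transfer of Theorem \ref{thm:JacksonBernstein} through the map $h \leftrightarrow g$.
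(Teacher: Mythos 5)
The paper does not actually prove this statement; it is quoted verbatim from Ditzian and Totik \cite[Cor.~7.2.5]{ditzian1987moduli}, whose proof goes through their general machinery: a direct Jackson-type estimate for the weighted modulus built from explicit polynomial kernels, the Bernstein inequality $\|\phi\, p_N'\|_{L^\infty(-1,1)}\le N\|p_N\|_{L^\infty(-1,1)}$, and the K-functional realization arguments that the paper itself invokes later in Theorems \ref{thm:BernsteinFE} and \ref{thm:onewithconjecture}. Your transplantation argument is therefore a genuinely different route, and for this particular special case (first-order modulus, sup norm, Chebyshev weight $\phi(x)=\sqrt{1-x^2}$) it is the more elementary one: it reduces everything to Theorem \ref{thm:JacksonBernstein}, and both of your reduction steps are in fact true. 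Two points need repair. First, the symmetric difference $g(\theta-s)-g(\theta+s)$ is centred at $x_0=\cos\theta\cos s$, not at $\cos\theta$, since $\cos(\theta\mp s)=\cos\theta\cos s\pm\sin\theta\sin s$; your claimed ``exact'' identification fails, but the direction survives because $\phi(x_0)^2=1-\cos^2\theta\cos^2 s\ge\sin^2\theta$, whence $k=|\sin\theta\sin s|\le\phi(x_0)|s|$ and one gets $\omega_{\per}(g;\delta)\le\omega_\phi(h;\delta)$. For the converse, which you correctly identify as the real obstacle, the endpoint estimate does close: for $x\ge0$ and $0\le k<\delta\sqrt{1-x^2}$ one has $\arccos(x-k)-\arccos(x+k)=\int_{x-k}^{x+k}(1-c^2)^{-1/2}\,\d c\le 4k\,\bigl((1-x)(1+x-k)\bigr)^{-1/2}\le C\delta$, so $\omega_\phi(h;\delta)\le\omega_{\per}(g;C\delta)\le(C+1)\,\omega_{\per}(g;\delta)$. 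Second, with the paper's conventions $\mathcal{P}_N$ contains degrees up to $2n$ while $\mathcal{T}_N$ has trigonometric degree $n$, so your claimed equality $E(h;\mathcal{P}_N)=E(g;\mathcal{T}_N)$ is really a two-sided sandwich between linearly related degrees; this is harmless for $\mathcal{O}(N^{-\alpha})$ statements. What the reduction buys is self-containedness given Theorem \ref{thm:JacksonBernstein}; what it costs is generality: the equivalence $\omega_\phi^r(h;\delta)\asymp\omega^r_{\per}(h\circ\cos;\delta)$ fails for higher-order moduli $r\ge2$, which is precisely why Ditzian and Totik's proof of the general result cannot take this shortcut, and why the paper's surrounding discussion (the example $h(x)=(1-x^2)^\alpha$) is needed to motivate the weighted modulus in the first place.
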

The proof of Theorem \ref{thm:Ditzian-Totik} depends on the Bernstein inequality for algebraic polynomials, which states that for all $p_N \in \mathcal{P}_N$,
\begin{equation}\label{eqn:Bernsteinpolys}
\|\phi \cdot p_N' \|_{L^\infty(-1,1)} \leq N\|p_N\|_{L^\infty(-1,1)},
\end{equation}
where $\phi(x) = \sqrt{1-x^2}$ \cite[Ch.~4, Cor.~1.2]{devore1993constructive}. Compare this with the Bernstein inequality for classical Fourier series (equation \eqref{eqn:BernsteinFourier}). If we wish to remove the factor of $\phi$ in the left-hand-side of equation \eqref{eqn:Bernsteinpolys} then we must change $N$ to $N^2$ on the right-hand-side; this is then Markov's inequality \cite[Ch.~4, Thm.~1.4]{devore1993constructive}.

A Bernstein inequality was proved for Fourier extensions by Videnskii \cite{videnskii1960extremal}, see also \cite[p.~242]{borwein2012polynomials} and \cite[Sec.~2]{nagy2013bernstein}. It states that for all $r_N \in \Hspace_N$,
\begin{equation}\label{eqn:FEBernsteininequality}
\|\phi \cdot r_N'\|_{L^\infty(-1,1)} \leq \frac{\pi}{T} n \|r_N\|_{L^\infty(-1,1)},
\end{equation}
where the weight function is
\begin{equation}\label{eqn:FEweight}
\phi(x) = \frac{\sqrt{\sin\left((1-x)\frac{\pi}{2\T} \right)\sin\left((1+x)\frac{\pi}{2\T} \right)}}{\cos\left(x\frac{\pi}{2\T}\right)}.
\end{equation}
Note that since the sine function is concave in $[0,\pi]$, we have $|\sin\left(\frac{\pi}{2\T}(1\pm x)\right)| \geq \sin\left(\frac{\pi}{2\T}\right) |1\pm x|$ for $x \in [-1,1]$. Also, $|\sin\left(\frac{\pi}{2\T}(1\pm x)\right)| \leq |\frac{\pi}{2\T}(1\pm x)|$ and $\cos\left(x\frac{\pi}{2\T}\right) \in [\cos\left(\frac{\pi}{2\T}\right),1]$ for $x\in[-1,1]$. Therefore,
\begin{equation*}
\sin\left(\frac{\pi}{2\T}\right)\sqrt{1-x^2} \leq \phi(x) \leq \frac{\pi}{2\T \cos\left(\frac{\pi}{2\T}\right)}\sqrt{1-x^2},
\end{equation*}
and we can change equation \eqref{eqn:FEBernsteininequality} to
\begin{equation}\label{eqn:properBernstein}
\|\phi \cdot r_N'\|_{L^\infty(-1,1)} \leq \frac{\pi}{T\sin\left(\frac{\pi}{2\T}\right)} n \|r_N\|_{L^\infty(-1,1)},
\end{equation}
where $\phi(x) = \sqrt{1-x^2}$. Using the Bernstein inequality in equation \eqref{eqn:properBernstein} we can prove a Bernstein-type theorem for Fourier extensions.
\begin{theorem}[Bernstein-type]\label{thm:BernsteinFE}
  There exists a constant $C_T>0$ such that for all $f \in C([-1,1])$, the following holds:
 \begin{equation*}
 \omega_{\phi}\left(f;\frac{1}{N}\right) \leq \frac{C_T}{n}\sum_{\substack{k=3 \\ k \text{ odd}}}^N E(f;\Hspace_k),
 \end{equation*}
 where $\phi(x) = \sqrt{1-x^2}$ and $N = 2n+1$.
 \begin{proof}
   This follows directly from \cite[Ch.~6, Thm.~6.2]{devore1993constructive} and \cite[Ch.~7, Thm.~5.1(b)]{devore1993constructive}, with $r =1$,  $\mu = 1$, $X = L^\infty(-1,1)$, $\Phi_n = \mathcal{H}_N$, and $Y = W_\infty^1(\phi) := \{f \in W^{1,1}(-1,1): \phi \cdot f' \in L^\infty(-1,1)\}$, where $W^{1,1}(-1,1)$ is the Sobolev space of absolutely continuous functions on $(-1,1)$.
 \end{proof}
\end{theorem}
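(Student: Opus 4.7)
The strategy is to apply two standard results from the abstract theory of best approximation in Banach spaces, using the weighted Bernstein inequality \eqref{eqn:properBernstein} as the essential new ingredient. I would take the underlying Banach space to be $X = L^\infty(-1,1)$, the approximation subspaces to be $\Phi_n = \Hspace_N$ with $N = 2n+1$, and the smoothness space to be $Y = W_\infty^1(\phi)$ with seminorm $|g|_Y = \|\phi \cdot g'\|_{L^\infty(-1,1)}$, where $\phi(x) = \sqrt{1-x^2}$. The associated $K$-functional is
\begin{equation*}
K(f,t) \;=\; \inf_{g \in Y}\bigl(\|f-g\|_{L^\infty(-1,1)} + t\, |g|_Y\bigr).
\end{equation*}

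The first step is to convert the weighted Bernstein inequality into an upper bound on this $K$-functional in terms of a sum of best approximation errors. Equation \eqref{eqn:properBernstein} reads precisely $|r_N|_Y \leq C_T\, n\, \|r_N\|_X$ for every $r_N \in \Hspace_N$, which is exactly the hypothesis required by the abstract Bernstein theorem \cite[Ch.~7, Thm.~5.1(b)]{devore1993constructive} with parameters $r = \mu = 1$. Since $\Hspace_{k} \subset \Hspace_{k+2}$ and $\dim \Hspace_N$ is linear in $n$, applying the theorem to the odd-indexed family $\{\Hspace_k\}$ yields
\begin{equation*}
K\!\left(f,\tfrac{1}{n}\right) \leq \frac{C_T'}{n}\sum_{\substack{k=3\\ k \text{ odd}}}^N E(f;\Hspace_k).
\end{equation*}

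The second step is to identify this $K$-functional with the first-order weighted modulus of continuity. This is precisely the Ditzian--Totik equivalence \cite[Ch.~6, Thm.~6.2]{devore1993constructive}: for $\phi(x) = \sqrt{1-x^2}$ one has $\omega_\phi(f;t) \leq C\, K(f,t)$ for a universal constant $C$. Setting $t = 1/n$ and absorbing the constant into a new $C_T$ delivers the stated inequality.

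The main point to verify, rather than a true obstacle, is that the abstract hypotheses of \cite[Ch.~7, Thm.~5.1(b)]{devore1993constructive} and \cite[Ch.~6, Thm.~6.2]{devore1993constructive} are satisfied with the choices of $X$, $Y$, $\phi$, and $\Phi_n$ made above; notably, the scaling $n$ in the weighted Bernstein inequality is what produces the factor $1/n$ in both the $K$-functional bound and the final modulus estimate. With \eqref{eqn:properBernstein} already established, the remaining argument is a direct invocation of the abstract machinery.
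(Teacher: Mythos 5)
Your proposal is correct and follows essentially the same route as the paper: both invoke the weighted Bernstein inequality \eqref{eqn:properBernstein} as the hypothesis for the abstract Bernstein theorem \cite[Ch.~7, Thm.~5.1(b)]{devore1993constructive} to bound the $K$-functional of the pair $(L^\infty(-1,1), W_\infty^1(\phi))$ by the sum of best approximation errors, and then use the equivalence \cite[Ch.~6, Thm.~6.2]{devore1993constructive} between that $K$-functional and $\omega_\phi$. Your write-up simply makes explicit the $K$-functional intermediary that the paper leaves implicit in its citation.
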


From this Bernstein-type theorem for Fourier extensions, we get one half of an equivalence theorem between best approximation errors and weighted moduli of continuity. For the full equivalence, one must prove a Conjecture \ref{conj:Jackson} below.
\begin{theorem}\label{thm:onewithconjecture}
  Let $f \in C([-1,1])$ and $\alpha \in (0,1)$. It holds that
  \begin{equation*}
  E(f;\Hspace_N) = \mathcal{O}(N^{-\alpha}) \implies \omega_{\phi}\left(f; \delta\right) = \mathcal{O}(\delta^\alpha).
  \end{equation*} 
  If Conjecture \ref{conj:Jackson} is true, then the reverse implication holds too.
  \begin{proof}
    The forward implication follows immediately from Theorem \ref{thm:BernsteinFE}. Suppose now that Conjecture \ref{conj:Jackson} is true. Then we would have
    \begin{equation}\label{eqn:basicJacksonFE}
    E(f;\Hspace_N) \leq \frac{C_T}{n}\|\phi \cdot f' \|_{L^\infty(-1,1)}
    \end{equation}
    for all $f \in C^1([-1,1])$ by setting $f(x) = F\left(e^{\frac{i\pi}{\T}x}\right)$, because $f \in C^1([-1,1])$ if and only if $F \in C^1(A)$, $x \mapsto q_n(e^{\frac{i\pi}{\T}x}) \in \Hspace$, and $|\mu(e^{\frac{i\pi}{\T}x})| \leq \frac{\pi}{\T}\phi(x)$. We wish to extend this to all $f \in W^{1,1}(-1,1)$ such that $\phi \cdot f' \in L^\infty(-1,1)$ by a density argument, where $W^{1,p}(-1,1)$ is the Sobolev space of $L^p(-1,1)$ functions whose weak derivatives lie in $L^p(-1,1)$. For such a function $f$, one can verify that the functions $f_\rho(x) = f(\rho x)$ for $\rho \in (0,1)$ satisfy: $f_\rho \in W^{1,\infty}(-1,1)$, $f_\rho \to f$ in $L^\infty$, and $\|\phi \cdot f_\rho'\|_\infty \leq \|\phi \cdot f'\|_\infty$. For each $\rho$ and $\varepsilon > 0$ there exists $f_{\rho,\varepsilon} \in C^1([-1,1])$ such that $\|f_{\rho,\varepsilon} - f_\rho\|_{W^{1,\infty}} < \varepsilon$ by density of $C^1([-1,1])$ in $W^{1,\infty}(-1,1)$. Therefore there exists $f_\varepsilon \in C^1([-1,1])$ such that $\|f - f_\varepsilon\|_{L^\infty(-1,1)} < \varepsilon$ and $\|\phi \cdot f_\varepsilon'\|_\infty \leq \|\phi \cdot f'\|_\infty + \varepsilon$. Hence $E(f;\Hspace_N) \leq \|f-f_\varepsilon\|_{L^\infty(-1,1)} + E(f_\varepsilon;\Hspace_N) \leq \left(1 + \frac{C_T}{n}\right)\varepsilon + \frac{C_T}{n}\|\phi \cdot f'\|_\infty$. Since $\varepsilon$ is arbitrary, we have the desired inequality. A similar argument may be found in \cite[pp.~280]{devore1993constructive}.
    
    From the above it would follow that there exists a constant $C_T > 0$ such that
    \begin{equation}\label{eqn:JacksonfinalFE}
    E(f;\Hspace_N) \leq C_T \omega_\phi\left(f;\frac{1}{N}\right),
    \end{equation} 
    from equation \eqref{eqn:basicJacksonFE}, \cite[Ch.~6, Thm.~6.2]{devore1993constructive} and \cite[Ch.~7, Thm.~5.1(a)]{devore1993constructive}, with $r =1$,  $\mu = 1$, $X = L^\infty(-1,1)$, $\Phi_n = \mathcal{H}_N$, and $Y = W_\infty^1(\phi) := \{f \in W^{1,1}(-1,1): \phi \cdot f' \in L^\infty(-1,1)\}$. Equation \eqref{eqn:JacksonfinalFE} would imply that if $\omega_{\phi}\left(f; \delta\right) = \mathcal{O}(\delta^\alpha)$ then $E(f;\Hspace_N) = \mathcal{O}(N^{-\alpha})$, as required.
  \end{proof}
\end{theorem}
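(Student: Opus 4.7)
My plan treats the two implications separately: the forward direction is a direct application of the Bernstein-type Theorem \ref{thm:BernsteinFE}, while the reverse direction relies on extracting a Jackson-type inequality from Conjecture \ref{conj:Jackson} and then invoking abstract approximation-theoretic machinery.

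For the forward direction, I would assume $E(f;\Hspace_N) \leq C N^{-\alpha}$ and substitute this bound into Theorem \ref{thm:BernsteinFE}. With $N=2n+1$ this gives
\[
\omega_\phi(f;1/N) \leq \frac{C_T}{n}\sum_{\substack{k=3\\k\text{ odd}}}^{N} E(f;\Hspace_k) \leq \frac{C C_T}{n}\sum_{\substack{k=3\\k\text{ odd}}}^{N} k^{-\alpha}.
\]
Since $\alpha \in (0,1)$, the sum is $\mathcal{O}(N^{1-\alpha})$ by comparison with $\int_1^N t^{-\alpha}\,\d t$, so $\omega_\phi(f;1/N) = \mathcal{O}(N^{-\alpha})$. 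Monotonicity of $\omega_\phi$ in $\delta$ extends the conclusion to every $\delta \in (0,1]$.

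For the reverse direction, assume Conjecture \ref{conj:Jackson} and proceed in three stages. First, translate the conjecture into the concrete estimate $E(f;\Hspace_N) \leq (C_T/n)\,\|\phi \cdot f'\|_{L^\infty(-1,1)}$ for $f \in C^1([-1,1])$, using the identification $f(x) = F(e^{i\pi x/T})$, the fact that $x \mapsto q_n(e^{i\pi x/T})$ lies in $\Hspace_N$, and the weight comparison $|\mu(e^{i\pi x/T})| \leq (\pi/T)\phi(x)$. Second, extend this bound to all $f \in W^{1,1}(-1,1)$ with $\phi f' \in L^\infty(-1,1)$ by a density argument: the dilates $f_\rho(x) = f(\rho x)$ for $\rho \in (0,1)$ converge uniformly to $f$ and satisfy $\|\phi f_\rho'\|_\infty \leq \|\phi f'\|_\infty$, and on each dilate one mollifies to obtain $C^1$ approximants that respect both the $L^\infty$ and weighted-$L^\infty$ norms up to $\varepsilon$. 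Third, apply the realization/$K$-functional machinery of DeVore--Lorentz (Ch.~6 Thm.~6.2 and Ch.~7 Thm.~5.1(a)) with $X = L^\infty(-1,1)$, $\Phi_n = \Hspace_N$, and $Y = \{f \in W^{1,1}(-1,1) : \phi f' \in L^\infty\}$, to upgrade the derivative estimate into $E(f;\Hspace_N) \leq C_T\,\omega_\phi(f;1/N)$. Together with the hypothesis $\omega_\phi(f;\delta) = \mathcal{O}(\delta^\alpha)$, this delivers the target rate.

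The main obstacle I expect is the density step, since $\phi$ vanishes at $\pm 1$ and so dilation/mollification arguments must be carried out in a way that uniformly controls $\|\phi \cdot (\cdot)'\|_{L^\infty}$ near the endpoints. The crucial inequality $\rho\,\phi(x) \leq \phi(\rho x)$ for $\rho \in (0,1)$ is a short consequence of the concavity of $\sin$ on $[0,\pi]$ and the explicit form of $\phi$ in \eqref{eqn:FEweight}. Once that bookkeeping is pinned down, the remaining abstract approximation theory applies essentially mechanically, with no further new ideas required.
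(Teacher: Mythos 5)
Your proposal is correct and follows essentially the same route as the paper: the forward direction is the standard summation argument from the Bernstein-type Theorem \ref{thm:BernsteinFE} (which the paper compresses into ``follows immediately''), and the reverse direction uses the same three stages — converting Conjecture \ref{conj:Jackson} into the basic Jackson inequality, the dilation-plus-density argument (your observation that $\rho\,\phi(x)\leq\phi(\rho x)$ is exactly the point needed to verify $\|\phi\cdot f_\rho'\|_\infty\leq\|\phi\cdot f'\|_\infty$), and the DeVore--Lorentz $K$-functional machinery with the same choices of $X$, $\Phi_n$ and $Y$.
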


\begin{conjecture}[Jackson inequality for polynomials on a circular arc]\label{conj:Jackson}
  For any $T > 1$, define the arc on the complex unit circle,
  \begin{equation*}
  A = \left\{e^{i\theta} : \theta \in \left(-\frac{\pi}{\T},\frac{\pi}{\T}\right) \right\}.
  \end{equation*}
  There exists a constant $C_T>0$ such that for all $F \in C^1(A)$ and all $n \in \mathbb{N}$, there exists a polynomial $q_n$ of degree $n$ such that
  \begin{equation*}
  \sup_{z \in A} \left|F(z)-q_n(z) \right| \leq \frac{C_T}{n}  \sup_{z \in A} \left| \mu(z) F'(z) \right|,
  \end{equation*}
  where $\mu(z) = \sqrt{(z-e^{\frac{i\pi}{\T}})(z-e^{-\frac{i\pi}{\T}})}$.
\end{conjecture}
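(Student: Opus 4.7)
The plan is to establish the conjectured Jackson inequality on the arc $A$ by constructing a concrete linear approximation operator $J_n : C(A) \to \{\text{polynomials of degree }\leq n\}$, following the template of Jackson's classical proof on an interval. The idea is to build a positive, constants-reproducing kernel $J_n(z,\zeta)$ on $A\times A$ out of the Christoffel--Darboux kernels of the arc-Legendre polynomials $\{\Pi_k\}$ of Lemma \ref{lem:HNonb} (via Jackson-type squaring of a Fej\'er-averaged partial sum), so that $(J_n F)(z) = \int_A J_n(z,\zeta) F(\zeta)\, |d\zeta|$ automatically lies in the polynomial space.

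Concretely, the steps are: (i) verify that $J_n$ reproduces constants and outputs a polynomial in $z$ of degree $n$; (ii) using the asymptotics of $\Pi_k$ from Theorem \ref{thm:asymptotics}, establish the weighted kernel-moment estimate
\[
\sup_{z \in A} \int_A |J_n(z,\zeta)|\, \Bigl(\int_{\gamma(z,\zeta)} \frac{|dw|}{|\mu(w)|}\Bigr)\, |d\zeta| \leq \frac{C_T}{n},
\]
where $\gamma(z,\zeta) \subset A$ is the sub-arc joining $z$ and $\zeta$; and (iii) combine (ii) with the telescoping identity $F(z) - (J_n F)(z) = \int_A (F(z)-F(\zeta))J_n(z,\zeta)\,|d\zeta|$ together with the pointwise bound $|F(z)-F(\zeta)| \leq \|\mu F'\|_{L^\infty(A)} \int_{\gamma(z,\zeta)} |\mu(w)|^{-1}|dw|$. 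An alternative route would be to try to transplant the Ditzian--Totik Jackson theorem on $[-1,1]$ to the arc via $z = e^{i\pi x/T}$; however, since polynomials in $z$ do not pull back to algebraic polynomials in $x$, one would first need an analogue of Ditzian--Totik for ``one-sided'' Fourier polynomials $\sum_{k=0}^n c_k e^{ik\pi x/T}$ on $[-1,1]$, which is essentially equivalent in content to the conjecture itself.

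The main obstacle is the kernel-moment estimate in step (ii). The difficulty concentrates at the endpoints of $A$, where the bare Christoffel--Darboux kernel already has a Lebesgue function growing like $N^{1/2}$ (Theorem \ref{thm:lebesguefunction}(ii)); achieving the $1/n$ rate therefore requires the Jackson-type squaring to produce cancellations of exactly the right order, with the weight $\mu$ absorbing the residual endpoint growth. The explicit leading-order asymptotics of Theorem \ref{thm:asymptotics} --- in particular the Bessel-function endpoint behaviour that drove the Lebesgue analysis in Section \ref{sec:prolatekernel} --- should supply the needed estimates, because $|\mu(e^{i\pi x/T})|$ is comparable (with $T$-dependent constants) to $\sqrt{\sin((1+x)\pi/(2T))\sin((1-x)\pi/(2T))}$, which is exactly the Videnskii weight in equation \eqref{eqn:FEweight} that governs the polynomials' endpoint blow-up.
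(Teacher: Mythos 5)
You should first note that the statement you are trying to prove is stated in the paper as a \emph{Conjecture}: the authors do not prove it, they only derive consequences conditional on it (Theorem \ref{thm:onewithconjecture}). So there is no proof in the paper to fall back on, and your argument must stand entirely on its own. As written, it does not: it is a plan whose decisive step is exactly the content of the conjecture. Step (ii), the weighted kernel-moment estimate, is where all of the difficulty lives, and you dispose of it by saying the asymptotics of Theorem \ref{thm:asymptotics} ``should supply the needed estimates.'' That is an expectation, not an argument; in the classical Jackson proof the analogous first-moment bound is an explicit computation with a fully explicit, translation-invariant, positive kernel, none of which is available on the arc.

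Two concrete obstructions in your construction deserve naming. First, the operator you build does not obviously output a polynomial of degree $n$ in nonnegative powers of $z$. The Christoffel--Darboux kernel of the $\Pi_k$ involves $\overline{\Pi_k(\zeta)}\Pi_k(z)$, and squaring its modulus produces, as a function of $z$ on the circle, a \emph{Laurent} polynomial with powers running from $-m$ to $m$; moreover, to make the kernel reproduce constants you must normalize by $\int_A |K_m(z,\zeta)|^2\,|d\zeta| = K_m(z,z)$, and dividing by $K_m(z,z)$ (which by your own Lebesgue analysis varies from $\mathcal{O}(m)$ in the interior to $\mathcal{O}(m^2)$ at the endpoints) destroys polynomiality altogether. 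On the full circle this is repaired by translation invariance of the Fej\'er/Jackson kernel; on an arc that invariance is lost, and repairing it is not cosmetic. Second, the endpoint asymptotics of Theorem \ref{thm:asymptotics} carry error terms of size $\mathcal{O}(N^{-\frac12})$, which a priori is far too coarse to certify a first-moment bound of order $1/n$: the leading terms and the error terms would both need to be controlled after the squaring and after multiplication by the arclength factor $\int_{\gamma(z,\zeta)}|\mu(w)|^{-1}|dw|$, and nothing in the paper's kernel analysis (which only targets the $L^1$ norm of $K_N$ itself, yielding $\mathcal{O}(N^{1/2})$ at the endpoints) gives this. Until step (ii) is carried out in full, with the polynomial structure of the operator repaired, you have restated the conjecture rather than proved it.
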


Notice that to approximate $f$ we conjecture that we only need to use positive powers of $z$, which means we do not need to utilize all of the functions in $\Hspace_N$. This is because by Mergelyan's Theorem \cite[Thm.~20.5]{rudin1987real} polynomials are dense in the space $C(A)$. It is not surprising because of the redundant nature of approximation by Fourier extensions.

\section{A localization theorem for Fourier extensions}\label{sec:localization}
The theorem proved in this section is a modification of a theorem of Freud (\cite[Thm.~IV.5.4]{freud1971orthogonal}), which is a localization theorem for orthogonal polynomials on an interval. We, however, are working with the orthonormal basis given in Lemma \ref{lem:HNonb}, and there are some clear differences between the two situations. We show that these differences do not change the statement of the result.

\begin{theorem}[Localization theorem]\label{thm:localization}
  Let $f\in L^2(-1,1)$ be such that $f(x) = 0$ for all $x \in [a,b] \subseteq [-1,1]$. Then $\PN(f) \to 0$ uniformly in all subintervals $[c,d] \subset (a,b)$.
  
  \begin{proof}
    First note that the pointwise error can be written in terms of the prolate kernel discussed in Section \ref{sec:prolatekernel} as,
    \begin{equation*}
    \PN(f)(x) - f(x) = \int_{-1}^1 (f(y)-f(x))K_N(x,y)\d y.
    \end{equation*}
    Let $x \in [c,d] \subset (a,b)$, so that $f(x) = 0$. By the formula for the prolate kernel (Lemma \ref{lem:prolatekernelformula}),
    \begin{equation*}
    \PN(f)(x) - f(x) = \int_{-1}^1 \frac{f(y)}{\sin\left( \frac{\pi}{2\T}(x-y)\right)} \mathrm{Imag}\left(\overline{e^{-\frac{i\pi}{\T}\frac{N}{2}y} \cdot \Pi_N\left(e^{\frac{i\pi}{T}y}\right)} \cdot e^{-\frac{i\pi}{\T}\frac{N}{2}x} \cdot \Pi_N\left(e^{\frac{i\pi}{T}x}\right) \right) \, \d y.
    \end{equation*}
    By expressing the imaginary part as $\frac1{2i}$ times the difference of the complex conjugates, it is easy to see that for this expression to tend to zero as $N \to \infty$, it is sufficient that for any $f$ as in the statement of the theorem, we have
    \begin{equation*}
    \lim_{N\rightarrow\infty} \int_{-1}^1 \frac{f(y)}{\sin\left( \frac{\pi}{2\T}(x-y)\right)} e^{\frac{i\pi}{\T}\frac{N}{2}y} \cdot \overline{\Pi_N\left(e^{\frac{i\pi}{T}y}\right)} \, \d y = 0.
    \end{equation*}
    To prove this we consider the functions
    \begin{equation*}
    g_\xi(y) = \frac{f(y)e^{\frac{i\pi}{2\T}y}}{\sin\left( \frac{\pi}{2\T}(\xi-y)\right)},
    \end{equation*}
    for $\xi \in [c,d]$. It holds that $g_{\xi} \in  L^2(-1,1)$, because $g_{\xi}$ is equal to $0$ inside $[a,b]$ and equal to $f$ (an $L^2(-1,1)$ function) multiplied by a bounded function ($y \mapsto e^{\frac{i\pi}{2\T}y} / \sin\left( \frac{\pi}{2\T}(\xi-y)\right)$) outside of $[a,b]$. 
    
    Let $\varepsilon > 0$. By Proposition \ref{prop:L2convergence}, for any $\xi \in [c,d]$, there exists $K_\xi \in \mathbb{N}$ and a function $h_{K_\xi} \in \Hspace_{K_\xi}$ such that $\left\| g_\xi - h_{K_\xi} \right\|_{L^2(-1,1)} < \eps$.
    A key property of the function $h_{K_\xi}$ is that for $N \geq K_\xi$,
    \begin{equation}\label{eqn:coolorthogonality}
    \int_{-1}^1 h_{K_\xi}(y) e^{\frac{i\pi}{\T}\frac{N-1}{2}y} \cdot \overline{\Pi_N\left(e^{\frac{i\pi}{T}y}\right)} \, \d y = 0,
    \end{equation}
    because $h_{K_\xi}(y) e^{\frac{i\pi}{\T}\frac{N-1}{2}y}$ is a polynomial of degree $\frac{K_\xi-1}{2} + \frac{N-1}{2} \leq N - 1$ in the variable $z = \exp\left(\frac{i\pi}{\T}y\right)$. Now, because the map $x \mapsto g_x$ is a continuous mapping from $[c,d] \to L^2(-1,1)$, there exists an interval $I(\xi)$ such that for all $x \in I(\xi)$, $\left\| g_x - h_{K_\xi} \right\|_{L^2(-1,1)} < \eps$ is still valid. In consequence of the Heine-Borel Compactness Theorem \cite{rudin1987real}, the interval $[c,d]$ will be covered by finitely many of these intervals $I(\xi)$, which we denote, $I(\xi_1), I(\xi_2),\dots,I(\xi_s)$. 
    
    Let $K_\varepsilon$ be an odd integer such that $h_{K_{\xi_i}}\in \Hspace_{K_\varepsilon}$ for $i = 1,\ldots,s$. For an arbitrary $x\in[c,d]$ there is an interval $I(\xi_r)$ such that $x\in I(\xi_r)$ and for $N>K_{\varepsilon}$, we have (using the expression \eqref{eqn:coolorthogonality}),
    
    \begin{eqnarray*}
      \left|\int_{-1}^1 \frac{f(y)}{\sin\left( \frac{\pi}{2\T}(x-y)\right)} e^{\frac{i\pi}{\T}\frac{N}{2}y} \cdot \overline{\Pi_N\left(e^{\frac{i\pi}{T}y}\right)} \, \d y\right| &=& \left|\int_{-1}^1 \left(g_{x}(y) - h_{K_{\xi_r}}(y) \right) e^{\frac{i\pi}{\T}\frac{N-1}{2}y} \overline{\Pi_N\left(e^{\frac{i\pi}{T}y}\right)} \, \d y\right| \\
      &\leq& \left(\int_{-1}^1 |g_{x}(y) - h_{K_{\xi_r}}(y)|^2 \,\d y\right)^{\frac12} \\
      & & \qquad \cdot \left(\int_{-1}^1 \left|e^{\frac{i\pi}{\T}\frac{N-1}{2}y} \overline{\Pi_N\left(e^{\frac{i\pi}{T}y}\right)}\right|^2 \,\d y \right)^{\frac12} \\
      &<& \varepsilon.
    \end{eqnarray*}
    This last line used the normality of the basis for $\Hspace_N$ discussed in Lemma \ref{lem:HNonb}. 
    
    In conclusion, since $\varepsilon$ is arbitrary and the inequality above is valid for all $N > K_{\varepsilon}$, the integral must converge to zero as $N \to \infty$, uniformly with respect to $x \in [c,d]$, as required.
  \end{proof}
\end{theorem}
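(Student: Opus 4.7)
The plan is to mimic Freud's classical localization theorem, leaning on three pieces of machinery already in place: the explicit prolate-kernel formula of Lemma \ref{lem:prolatekernelformula}, the $L^2$-density of $\bigcup_N \Hspace_N$ from Proposition \ref{prop:L2convergence}, and the orthonormality property of Lemma \ref{lem:HNonb}. Writing $\PN(f)(x) = \int_{-1}^1 K_N(x,y) f(y)\,\d y$ and using $f(x) = 0$ for $x \in [a,b]$, the pointwise error $\PN(f)(x) - f(x) = \PN(f)(x)$ becomes an integral whose numerator is $\overline{e^{-i\pi Ny/(2\T)}\Pi_N(e^{i\pi y/\T})} \cdot e^{-i\pi Nx/(2\T)}\Pi_N(e^{i\pi x/\T})$ divided by $\sin(\pi(x-y)/(2\T))$. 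Expanding $\mathrm{Imag}(\cdot) = (2i)^{-1}(\cdot - \overline{\cdot})$ reduces the task to showing, for each $x \in [c,d]$, that
\begin{equation*}
I_N(x) := \int_{-1}^1 g_x(y)\, e^{i\pi(N-1)y/(2\T)}\, \overline{\Pi_N(e^{i\pi y/\T})}\,\d y \longrightarrow 0 \quad \text{uniformly in } x,
\end{equation*}
where $g_x(y) := f(y) e^{i\pi y/(2\T)}/\sin(\pi(x-y)/(2\T))$, together with its complex-conjugate twin handled symmetrically.

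Two ingredients drive the argument. First, because $f$ vanishes on $[a,b]$ and $x \in [c,d]$ stays strictly inside $(a,b)$, the singular factor $1/\sin(\pi(x-y)/(2\T))$ is uniformly bounded on the effective support of $g_x$; hence $g_x \in L^2(-1,1)$ with norm controlled independently of $x \in [c,d]$. Second, for any odd $K$ and any $h \in \Hspace_K$, whenever $N \geq K$ the product $h(y) e^{i\pi(N-1)y/(2\T)}$ is a polynomial of degree at most $N-1$ in the variable $z = e^{i\pi y/\T}$; by the construction of $\Pi_N$ in Lemma \ref{lem:HNonb}, this is $L^2(-1,1)$-orthogonal to $\overline{\Pi_N(e^{i\pi y/\T})}$. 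Consequently $I_N(x)$ is invariant under replacing $g_x$ by $g_x - h$, and Cauchy-Schwarz together with the unit $L^2$-norm of the basis element gives the clean bound $|I_N(x)| \leq \|g_x - h\|_{L^2(-1,1)}$.

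Given $\varepsilon > 0$, Proposition \ref{prop:L2convergence} applied to each $g_\xi$ produces some $K_\xi$ and $h_{K_\xi} \in \Hspace_{K_\xi}$ with $\|g_\xi - h_{K_\xi}\|_{L^2} < \varepsilon$. The map $\xi \mapsto g_\xi$ is continuous from $[c,d]$ into $L^2(-1,1)$ (using dominated convergence plus the uniform bound on the singular denominator away from $y \in [a,b]^c$), so the same $h_{K_\xi}$ remains an $\varepsilon$-approximant on some open neighbourhood $I(\xi) \ni \xi$. Heine--Borel compactness extracts a finite subcover $I(\xi_1), \ldots, I(\xi_m)$ of $[c,d]$; setting $K_\varepsilon$ to be an odd integer exceeding all $K_{\xi_i}$, we obtain $|I_N(x)| \leq \varepsilon$ for every $x \in [c,d]$ and every $N \geq K_\varepsilon$.

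The main obstacle is the continuity of $\xi \mapsto g_\xi$ in $L^2$ and the accompanying uniformity in the compactness argument: without exploiting that $\xi$ stays strictly inside $(a,b)$, the singular denominator could blow up near the support of $f$, and a single approximant would no longer serve on a neighbourhood. All the other steps are essentially bookkeeping around the orthogonality identity, which is the structural analogue of the Christoffel-Darboux cancellation that powers Freud's original proof, with the Legendre polynomials on a circular arc stepping in for the orthogonal polynomials on $[-1,1]$.
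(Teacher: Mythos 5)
Your proposal is correct and follows the paper's own argument essentially verbatim: the same reduction via the prolate kernel formula to the integral $I_N(x)$, the same functions $g_x$ made square-integrable by the vanishing of $f$ on $[a,b]$, the same orthogonality of $h(y)e^{\frac{i\pi}{\T}\frac{N-1}{2}y}$ to $\overline{\Pi_N(e^{\frac{i\pi}{\T}y})}$ for $h\in\Hspace_K$ with $N\geq K$, and the same continuity-plus-Heine--Borel compactness step to get uniformity in $x$. No gaps.
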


\section{Numerical experiments}\label{sec:numerical}

In this section we provide numerically computed examples of pointwise and uniform convergence of Fourier extensions for functions with various regularity properties. It was discussed in the introduction that the condition number of the linear system for computing the Fourier extension is extremely ill-conditioned, making computation of the exact solution to the Fourier extension practically impossible. To deal with this issue, we used sufficiently high precision floating point arithmetic and we did not take $N$ higher than 129, to ensure that the system could be inverted accurately. The right-hand-side vectors for the computations are computed by quadrature in high precision floating point arithmetic.

In practice, one would compute a fast regularized oversampled interpolation Fourier extension using the algorithm in \cite{matthysen2016fast}, requiring only $\mathcal{O}(N\log^2(N))$ floating point operations. However, we are interested 
in the exact Fourier extension and want to avoid any artefacts that may be caused by the regularization or discretization of the domain.

In some cases, we compare the convergence rate of Fourier extensions to that of Legendre series, because we predict that the qualitative behaviour of Legendre series will be similar (see Section \ref{sec:discussion}). For the Legendre series approximations we computed the Legendre series coefficients one by one using adaptive quadrature in 64 bit floating point precision. As such, the errors for the Legendre series approximations will stagnate due to numerical error.

\subsection{Analytic and entire functions}

Theorem \ref{thm:analyticuniform} gives an upper bound on the rate of exponential convergence of Fourier extension approximations to analytic functions. The regions of analyticity in the complex plane which dictate the rate are the mapped Bernstein ellipse $\mathcal D(\rho)$, where $\rho > 1$. The theorem is illustrated in Figure \ref{fig:ExponentialConvergence}, where we approximate an entire function and four analytic functions which each have a pole in a different location in the complex plane. All examples exhibit exponential convergence in the uniform norm at a rate which is predicted by Theorem \ref{thm:analyticuniform}. This is also the case for 
the entire function, where the exponential convergence rate is limited by a $T$-dependent upper bound.

\begin{figure}[h!]
	\centering
	\resizebox{.60\columnwidth}{!}{\includegraphics[]{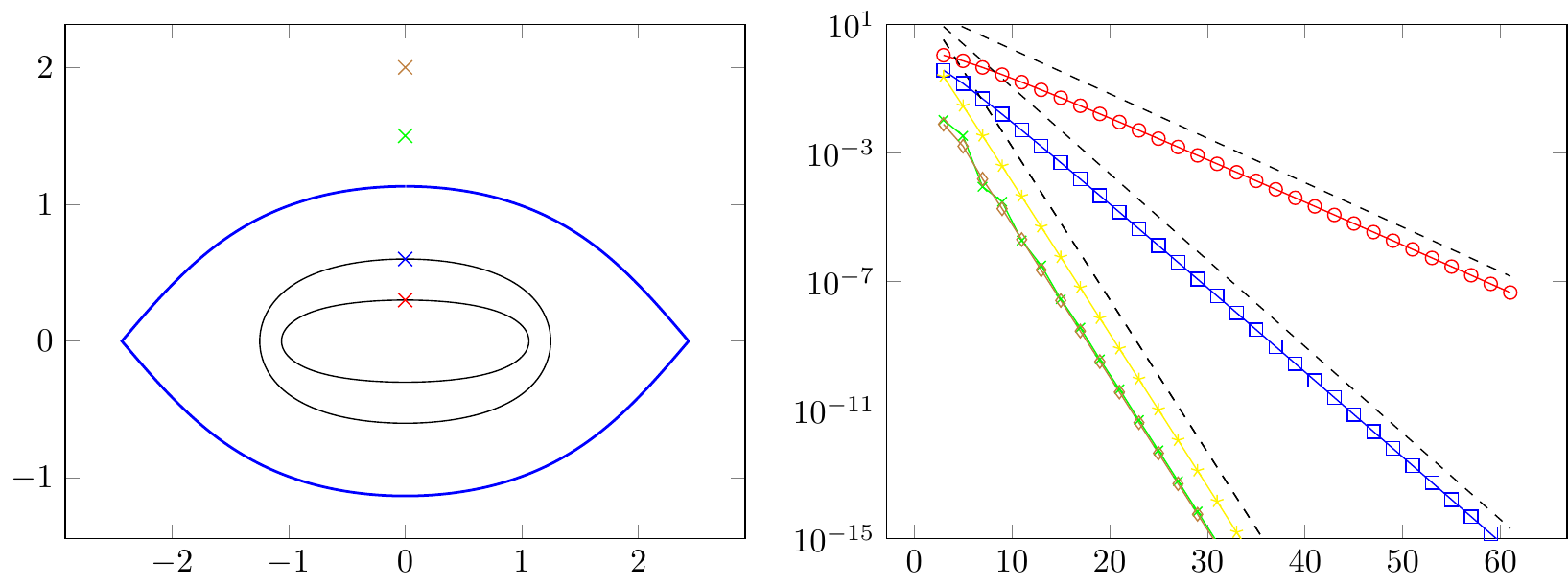}}
	\caption{\label{fig:ExponentialConvergence} We compute Fourier extension approximations to 5 functions: $f(x) = e^x$ (yellow stars) and $f(x) = \frac{1}{x-r}$ for $r=0.3i,~0.6i,~1.5i,~2.0i$ (red circles, blue squares, green crosses and brown diamonds, respectively). The $T$ parameter is $2.43$. Left: the mapped Bernstein ellipses $\mathcal{D}(\rho)$ in the complex plane, for $\rho = 1.891, 3.454, 8.913$. The outermost outline (in blue) encloses the maximal mapped Bernstein ellipse; analyticity outside this largest region does not increase the exponential convergence rate. Right: The $L^\infty(-1,1)$ error against values of $N$ for each of the 5 functions. The black dashed lines indicate the convergence rates predicted by Theorem \ref{thm:analyticuniform}. Color figures online.}
\end{figure}

\subsection{Differentiable functions}

 We investigate Fourier extension approximation of splines of degree $d = 3, 9$ and $15$ on the interval $\left[0,\frac12\right]$, which lie in the H\"older spaces $C^{2,1}\left(\left[0,\frac12\right]\right)$, $C^{8,1}\left(\left[0,\frac12\right]\right)$ and $C^{14,1}\left(\left[0,\frac12\right]\right)$ respectively. By Theorem \ref{thm:Linftyalgebraic}, we expect the pointwise errors to be $\mathcal{O}(N^{-d}\log N)$ in the interior, and $\mathcal{O}(N^{\frac12 - d})$ uniformly over the whole interval.

The spline functions and the pointwise approximation errors for Fourier extensions with various values of $N$ are plotted in Figure \ref{fig:splines}. The rates of convergence predicted by Theorem \ref{thm:Linftyalgebraic} fit reasonably well, sometimes performing slightly better. For comparison, we include the errors for a Legendre series approximation in a dashed line of the same color. See subsection 8.1 for a full discussion comparing convergence of Legendre series and Fourier extensions.

\begin{figure}[h!]
	\centering
	\resizebox{.95\columnwidth}{!}{\input{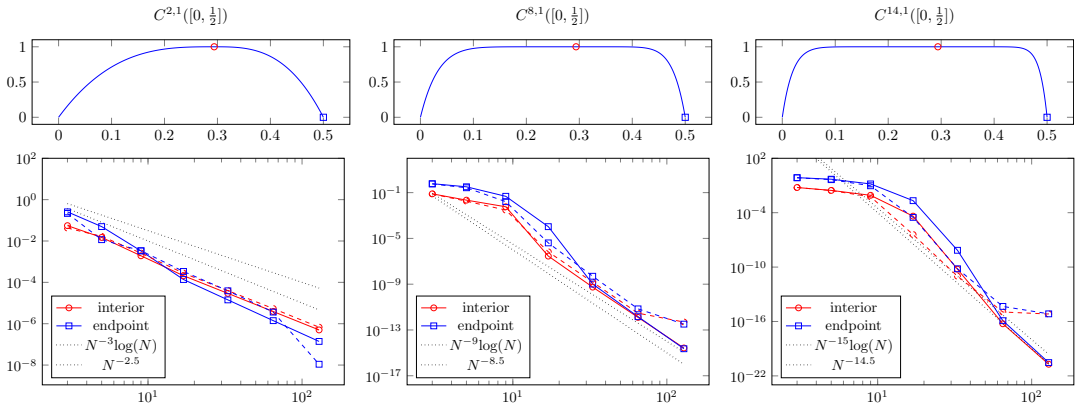}}%
	\\
	\resizebox{.95\columnwidth}{!}{\begin{tikzpicture}[]
\begin{groupplot}[group style={horizontal sep = 1.5cm, group size=3 by 1}]
\nextgroupplot [height = {200}, legend pos = {south west}, ymax = {100.0}, ymode = {log}, xmode = {log}, width = {250}]\addplot+ [mark = {o}, red]coordinates {
(3.0, 0.05526509673788803)
(5.0, 0.014851700440435184)
(9.0, 0.0019318011760717719)
(17.0, 0.0002092978759196738)
(33.0, 2.9264707100282884e-5)
(65.0, 3.881163313426071e-6)
(129.0, 5.001922288660172e-7)
};
\addlegendentry{interior}
\addplot+ [mark = {square}, blue]coordinates {
(3.0, 0.25658753198379586)
(5.0, 0.04972475031581476)
(9.0, 0.0033241932485276802)
(17.0, 0.0001372217011065452)
(33.0, 1.4213643989018907e-5)
(65.0, 1.408577454407209e-6)
(129.0, 1.3921888238506686e-7)
};
\addlegendentry{endpoint}
\addplot+ [mark = {none}, black,dotted]coordinates {
(3.0, 0.37037037037037035)
(5.0, 0.08)
(9.0, 0.013717421124828532)
(17.0, 0.0020354162426216163)
(33.0, 0.0002782647410746584)
(65.0, 3.641329085116067e-5)
(129.0, 4.658336629106499e-6)
};
\addlegendentry{$N^{-3}$log$(N)$}
\addplot+ [mark = {none}, black,dotted]coordinates {
(3.0, 0.6415002990995842)
(5.0, 0.1788854381999832)
(9.0, 0.041152263374485604)
(17.0, 0.008392236160426746)
(33.0, 0.001598509237426059)
(65.0, 0.0002935733363058188)
(129.0, 5.290853352116002e-5)
};
\addlegendentry{$N^{-2.5} $}
\addplot+ [mark = {o}, red,dashed]coordinates {
(3.0, 0.04227490526295408)
(5.0, 0.01671688450284381)
(9.0, 0.0026011940106971387)
(17.0, 0.00027860812664726176)
(33.0, 3.9758610208107115e-5)
(65.0, 5.392917413971077e-6)
(129.0, 6.783000150445417e-7)
};
\addplot+ [mark = {square}, blue,dashed]coordinates {
(3.0, 0.21544367337307546)
(5.0, 0.011599050940272249)
(9.0, 0.0033934290474340843)
(17.0, 0.0003363984245931856)
(33.0, 3.9026673296984364e-5)
(65.0, 3.647680155374392e-6)
(129.0, 1.095094015254936e-8)
};
\nextgroupplot [height = {200}, legend pos = {south west}, ymax = {100.0}, ymode = {log}, xmode = {log}, width = {250}]\addplot+ [mark = {o}, red]coordinates {
(3.0, 0.07821048358894317)
(5.0, 0.022030396439044097)
(9.0, 0.0058045256340345745)
(17.0, 2.8444519371359425e-7)
(33.0, 5.551366483135386e-10)
(65.0, 1.2578023144224415e-12)
(129.0, 2.669642230033547e-15)
};
\addlegendentry{interior}
\addplot+ [mark = {square}, blue]coordinates {
(3.0, 0.6023928974195496)
(5.0, 0.32344783646800546)
(9.0, 0.04751404033289417)
(17.0, 0.00010645141405430719)
(33.0, 1.0260955351179846e-9)
(65.0, 1.4493638095863932e-12)
(129.0, 2.2911891098911248e-15)
};
\addlegendentry{endpoint}
\addplot+ [mark = {none}, black,dotted]coordinates {
(3.0, 0.050805263425290854)
(5.0, 0.0005120000000000001)
(9.0, 2.581174791713197e-6)
(17.0, 8.432565195863826e-9)
(33.0, 2.1546391219452567e-11)
(65.0, 4.828139283063165e-14)
(129.0, 1.0108637138437517e-16)
};
\addlegendentry{$N^{-9}$log$(N)$}
\addplot+ [mark = {none}, black,dotted]coordinates {
(3.0, 0.08799729754452457)
(5.0, 0.0011448668044798925)
(9.0, 7.743524375139592e-6)
(17.0, 3.476835699745383e-8)
(33.0, 1.237745941669622e-10)
(65.0, 3.8925703344740606e-13)
(129.0, 1.1481204762027884e-15)
};
\addlegendentry{$N^{-8.5} $}
\addplot+ [mark = {o}, red,dashed]coordinates {
(3.0, 0.07632306173424364)
(5.0, 0.01783716473050634)
(9.0, 0.002982542416365641)
(17.0, 6.664107412568399e-7)
(33.0, 1.4994335773010903e-9)
(65.0, 2.933764342571976e-12)
(129.0, 4.931610675384945e-13)
};
\addplot+ [mark = {square}, blue,dashed]coordinates {
(3.0, 0.5711879109817688)
(5.0, 0.266220483296742)
(9.0, 0.016180617867821566)
(17.0, 4.0617470030820995e-6)
(33.0, 4.911917095325576e-9)
(65.0, 6.979762333896942e-12)
(129.0, 3.2283812396209775e-13)
};
\nextgroupplot [height = {200}, legend pos = {south west}, ymax = {100.0}, ymode = {log}, xmode = {log}, width = {250}]\addplot+ [mark = {o}, red]coordinates {
(3.0, 0.059833296638478806)
(5.0, 0.028681460994855384)
(9.0, 0.008256792201651438)
(17.0, 4.136017838705343e-5)
(33.0, 5.852882527477902e-11)
(65.0, 5.602754178509705e-17)
(129.0, 1.9195852399862057e-21)
};
\addlegendentry{interior}
\addplot+ [mark = {square}, blue]coordinates {
(3.0, 0.7311342042789644)
(5.0, 0.49306764195152947)
(9.0, 0.14845949339486622)
(17.0, 0.002122524915882734)
(33.0, 7.0824092217055955e-9)
(65.0, 1.2289445497609736e-16)
(129.0, 2.9191055027041243e-21)
};
\addlegendentry{endpoint}
\addplot+ [mark = {none}, black,dotted]coordinates {
(3.0, 6969.171937625632)
(5.0, 3.2768)
(9.0, 0.00048569357496188614)
(17.0, 3.493543693593927e-8)
(33.0, 1.6683643525542651e-12)
(65.0, 6.401763859229619e-17)
(129.0, 2.1935843828486465e-21)
};
\addlegendentry{$N^{-15}$log$(N)$}
\addplot+ [mark = {none}, black,dotted]coordinates {
(3.0, 12070.959882650834)
(5.0, 7.327147548671311)
(9.0, 0.0014570807248856584)
(17.0, 1.4404249656398218e-7)
(33.0, 9.584023540498836e-12)
(65.0, 5.161267027685162e-16)
(129.0, 2.4914329317952645e-20)
};
\addlegendentry{$N^{-14.5} $}
\addplot+ [mark = {o}, red,dashed]coordinates {
(3.0, 0.06014226477255735)
(5.0, 0.027386264303993046)
(9.0, 0.00574847507890941)
(17.0, 3.846237874860492e-7)
(33.0, 9.898304398348046e-12)
(65.0, 1.1102230246251565e-15)
(129.0, 6.661338147750939e-16)
};
\addplot+ [mark = {square}, blue,dashed]coordinates {
(3.0, 0.707508293333333)
(5.0, 0.44023651369597505)
(9.0, 0.09295274270576752)
(17.0, 3.0428873481015478e-5)
(33.0, 6.544193756775093e-11)
(65.0, 4.580816858293056e-15)
(129.0, 6.978712913794951e-16)
};
\end{groupplot}

\end{tikzpicture}}%
	\caption{\label{fig:splines} Above : Plots of plines of degree 3, 9, and 15 in $C^{2,1}$, $C^{8,1}$, and $C^{14,1}$, respectively with an interior point marked using a red circle, and a boundary point marked with a blue square. Below: The pointwise error at an interior point (red circle) and an endpoint (blue square) using Fourier extension with $T=2$ (full lines) and using Legendre series (dashed lines) against the number of degrees of freedom, $N$. The black dotted lines without markers indicate the upper bounds on the algebraic rates of convergence predicted by Theorem \ref{thm:Linftyalgebraic}. Color figures online.}
\end{figure}

\subsection{Non-differentiable functions}

We investigate the approximation of functions with algebraic singularities, discontinuities, and Dini-Lipschitz continuity. 

Functions with an algebraic singularity at the endpoint are studied in Figure \ref{fig:holder}. We plot the pointwise errors for Fourier extension and Legendre series approximations to $f(x) = x^\alpha$ for $\alpha = \frac34, \frac12$, and $\frac{1}{10}$. These functions lie in the H\"older spaces $C^\alpha\left(\left[0,\frac12\right]\right)$ for their respective values of $\alpha$. 

While Theorem \ref{thm:Linftyalgebraic} guarantees uniform convergence over $[-1,1]$ only for the first function (since for the other two functions $\alpha \leq \frac12$), in our experiments we find that the error of the other two functions converges to zero too. We believe that this discrepancy is related to the weighted moduli of continuity of these functions being more  favourable than the standard moduli (see Section \ref{sec:bestuniform}). Overall, the observed convergence rates are sometimes better than the predicted rates, but when Fourier extensions are compared with Legendre series, we see similar rates of pointwise convergence, especially at the singular point. See subsection 8.1 for a full discussion comparing convergence of Legendre series and Fourier extensions.

\begin{figure}[h!]
	\centering
	\resizebox{.95\columnwidth}{!}{\input{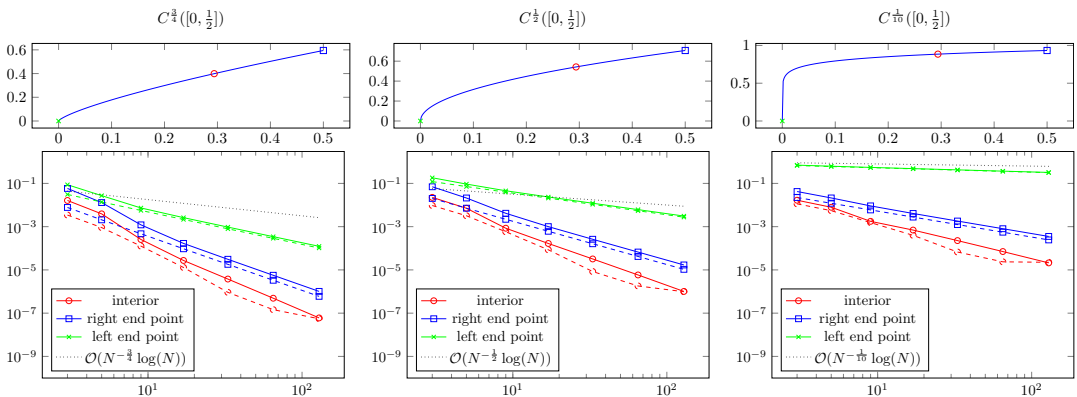}}%
	\\
	\resizebox{.95\columnwidth}{!}{\begin{tikzpicture}[]
\begin{groupplot}[group style={horizontal sep = 1.5cm, group size=3 by 1}]
\nextgroupplot [height = {200}, legend pos = {south west}, ymax = {3.0}, ymode = {log}, xmode = {log}, ymin = {1.0e-10}, width = {250}]\addplot+ [mark = {o}, red]coordinates {
(3.0, 0.016245283219023687)
(5.0, 0.003907186406860123)
(9.0, 0.00025507453410869335)
(17.0, 2.7336120160750247e-5)
(33.0, 3.7782728789690844e-6)
(65.0, 4.907906387532393e-7)
(129.0, 6.027668448348904e-8)
};
\addlegendentry{interior}
\addplot+ [mark = {square}, blue]coordinates {
(3.0, 0.05853008090510997)
(5.0, 0.013024907687321471)
(9.0, 0.0012105443915235035)
(17.0, 0.00016801573293525398)
(33.0, 3.094537371201497e-5)
(65.0, 5.6400877325231246e-6)
(129.0, 1.0145155806624157e-6)
};
\addlegendentry{right end point}
\addplot+ [mark = {x}, green]coordinates {
(3.0, 0.0868538057416897)
(5.0, 0.027207895946750577)
(9.0, 0.007329827323724146)
(17.0, 0.002609881807288721)
(33.0, 0.000955978500780046)
(65.0, 0.0003450214332380034)
(129.0, 0.00012333208058255062)
};
\addlegendentry{left end point}
\addplot+ [mark = {none}, black,dotted]coordinates {
(3.0, 0.043869133765083085)
(5.0, 0.02990697562442441)
(9.0, 0.01924500897298753)
(17.0, 0.011944371675699594)
(33.0, 0.007262974930086152)
(65.0, 0.004368325406821042)
(129.0, 0.0026125085309391134)
};
\addlegendentry{$\mathcal O(N^{-\frac{3}{4}}\log(N))$}
\addplot+ [mark = {o}, red,dashed]coordinates {
(3.0, 0.003441045683934829)
(5.0, 0.0009093837584018272)
(9.0, 0.00012589587768896404)
(17.0, 1.2944759294031272e-5)
(33.0, 9.168077935717278e-7)
(65.0, 1.4494100347706151e-7)
(129.0, 5.522346951947199e-8)
};
\addplot+ [mark = {square}, blue,dashed]coordinates {
(3.0, 0.0077221241084021175)
(5.0, 0.0020674794435021626)
(9.0, 0.0004683702526149247)
(17.0, 9.505020150091248e-5)
(33.0, 1.807947252896458e-5)
(65.0, 3.3191277705180156e-6)
(129.0, 5.981147520595087e-7)
};
\addplot+ [mark = {x}, green,dashed]coordinates {
(3.0, 0.03088849662024623)
(5.0, 0.013783196507253626)
(9.0, 0.00562044332002516)
(17.0, 0.002154471653311092)
(33.0, 0.0007954974755491274)
(65.0, 0.0002876589498905593)
(129.0, 0.00010287785217970335)
};
\nextgroupplot [height = {200}, legend pos = {south west}, ymax = {3.0}, ymode = {log}, xmode = {log}, ymin = {1.0e-10}, width = {250}]\addplot+ [mark = {o}, red]coordinates {
(3.0, 0.022607222437884614)
(5.0, 0.0070916580561896266)
(9.0, 0.0008465140987713085)
(17.0, 0.0001667328380346621)
(33.0, 3.210845167994735e-5)
(65.0, 5.843442404364781e-6)
(129.0, 1.0097969831079067e-6)
};
\addlegendentry{interior}
\addplot+ [mark = {square}, blue]coordinates {
(3.0, 0.06993435775907923)
(5.0, 0.021157762017588625)
(9.0, 0.004153863562692988)
(17.0, 0.0010037611736792768)
(33.0, 0.0002608098586417146)
(65.0, 6.688331175277181e-5)
(129.0, 1.6959190686379354e-5)
};
\addlegendentry{right end point}
\addplot+ [mark = {x}, green]coordinates {
(3.0, 0.18264359506004302)
(5.0, 0.09307354756543247)
(9.0, 0.04569454437020854)
(17.0, 0.02360807895916678)
(33.0, 0.012107476039965202)
(65.0, 0.006139943223551725)
(129.0, 0.0030928781009668146)
};
\addlegendentry{left end point}
\addplot+ [mark = {none}, black,dotted]coordinates {
(3.0, 0.057735026918962574)
(5.0, 0.044721359549995794)
(9.0, 0.03333333333333333)
(17.0, 0.024253562503633298)
(33.0, 0.017407765595569787)
(65.0, 0.012403473458920846)
(129.0, 0.008804509063256239)
};
\addlegendentry{$\mathcal O(N^{-\frac{1}{2}}\log(N))$}
\addplot+ [mark = {o}, red,dashed]coordinates {
(3.0, 0.009499131248269954)
(5.0, 0.0031891758654486457)
(9.0, 0.0005806085458626153)
(17.0, 8.505582676110457e-5)
(33.0, 8.416020449208972e-6)
(65.0, 1.8105177849170317e-6)
(129.0, 9.802811322678906e-7)
};
\addplot+ [mark = {square}, blue,dashed]coordinates {
(3.0, 0.020203050686835144)
(5.0, 0.007142492531841738)
(9.0, 0.0021891848768944566)
(17.0, 0.0006122134609586327)
(33.0, 0.000162366452595486)
(65.0, 4.1842910974421166e-5)
(129.0, 1.0622917469671478e-5)
};
\addplot+ [mark = {x}, green,dashed]coordinates {
(3.0, 0.12121830576829877)
(5.0, 0.07142492784028451)
(9.0, 0.0394053319222809)
(17.0, 0.02081526502347819)
(33.0, 0.01071619968417247)
(65.0, 0.005439605292332683)
(129.0, 0.0027407656571169993)
};
\nextgroupplot [height = {200}, legend pos = {south west}, ymax = {3.0}, ymode = {log}, xmode = {log}, ymin = {1.0e-10}, width = {250}]\addplot+ [mark = {o}, red]coordinates {
(3.0, 0.016700768101535816)
(5.0, 0.007896047815419676)
(9.0, 0.00176176424853406)
(17.0, 0.0006952879222538851)
(33.0, 0.00022665156805525187)
(65.0, 7.068747977730116e-5)
(129.0, 2.10916925876172e-5)
};
\addlegendentry{interior}
\addplot+ [mark = {square}, blue]coordinates {
(3.0, 0.042101586806523046)
(5.0, 0.0212498635571133)
(9.0, 0.009124826537204948)
(17.0, 0.004066491408194989)
(33.0, 0.001817793512057936)
(65.0, 0.0008040133305107636)
(129.0, 0.00035301720907470905)
};
\addlegendentry{right end point}
\addplot+ [mark = {x}, green]coordinates {
(3.0, 0.6993093219141452)
(5.0, 0.6233423902813896)
(9.0, 0.5497721644752085)
(17.0, 0.48315081090020884)
(33.0, 0.42293023871830077)
(65.0, 0.36926403210433445)
(129.0, 0.3219499984417617)
};
\addlegendentry{left end point}
\addplot+ [mark = {none}, black,dotted]coordinates {
(3.0, 0.8959584598407622)
(5.0, 0.8513399225207846)
(9.0, 0.8027415617602307)
(17.0, 0.7532776949250389)
(33.0, 0.7049342406834318)
(65.0, 0.658731853095205)
(129.0, 0.6150933460881862)
};
\addlegendentry{$\mathcal O(N^{-\frac{1}{10}}\log(N))$}
\addplot+ [mark = {o}, red,dashed]coordinates {
(3.0, 0.01134161075179263)
(5.0, 0.005488380391070247)
(9.0, 0.0015396803639289303)
(17.0, 0.00039724913063954315)
(33.0, 6.70765448494981e-5)
(65.0, 2.3644132237765625e-5)
(129.0, 2.2479092054994432e-5)
};
\addplot+ [mark = {square}, blue,dashed]coordinates {
(3.0, 0.022280216562358346)
(5.0, 0.012051135120719758)
(9.0, 0.005948912515274074)
(17.0, 0.0027727132586834458)
(33.0, 0.001250841785308432)
(65.0, 0.0005545183691515465)
(129.0, 0.00024361339786771463)
};
\addplot+ [mark = {x}, green,dashed]coordinates {
(3.0, 0.66840654387555)
(5.0, 0.6025568320407291)
(9.0, 0.5354022588938177)
(17.0, 0.4713615034237672)
(33.0, 0.4127782706824941)
(65.0, 0.36043788495532036)
(129.0, 0.31426316146749256)
};
\end{groupplot}

\end{tikzpicture}}%
	\caption{\label{fig:holder} Above left: $f(x)=x^{3/4}$. Above middle: $f(x)=x^{1/2}$. Above right: $f(x)=x^{1/10}$. Below: The pointwise error at an interior point (red circles), singular endpoint (green crosses), and non-singular endpoint (blue square) using Fourier extension with $T=2$ (full lines) and Legendre series (dashed lines) against the number of degrees of freedom, $N$. The black dotted lines without markers indicate the upper bounds on the algebraic rates of convergence predicted by Theorem \ref{thm:Linftyalgebraic}. Color figures online.}
\end{figure}

Three functions with a singularity at the interior are shown in Figure \ref{fig:other_asym}. The first has an algebraic singularity: $f(x) = |x-r|^{1/4}$ where $r = 0.29384$ (chosen to avoid any symmetry with respect to the domain). We observe agreement with the expected convergence rate of $\mathcal O(N^{1/4}\log N))$ for the error at interior points. The second function has a jump: 
\begin{equation}\label{eqn:jumpfunction}
f(x) = \begin{cases}
x & \text{ if } x \in \left[0,\frac14\right], \\
1 & \text { if } x \in \left(\frac14,\frac12\right].
\end{cases}
\end{equation}
Even though the function is highly irregular because of the jump, this does not deny convergence at regular points, corroborating the local nature of Theorem \ref{thm:DiniLipschitz}. The last function is uniformly Dini-Lipschitz continuous in $\left[0,\frac12\right]$: 
\begin{equation}\label{eqn:dinilipschitzexample}
f(x) = \begin{cases}
(\log\left(|x-r|\right))^{-2} & \text{ if } x \in \left[0,\frac12\right] \setminus \{r\} \\
0 & \text{ if } x = r,
\end{cases}
\end{equation}
where $r = 0.29384$ (chosen to avoid any symmetry with respect to the domain). In Figure \ref{fig:other_asym}, the expected convergence rate of $\mathcal{O}((\log(N))^{-1})$ of Lemma \ref{lem:superlemma} is present. 
 
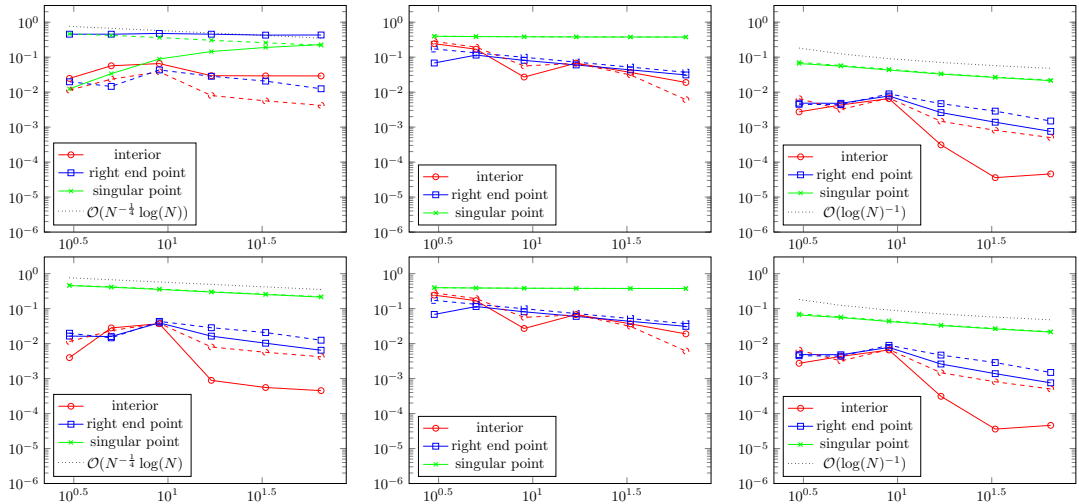
\begin{figure}[h!]
  \centering
  \resizebox{.95\columnwidth}{!}{\begin{tikzpicture}[]
\begin{groupplot}[group style={horizontal sep = 1.5cm, group size=3 by 1}]
\nextgroupplot [height = {200}, legend pos = {south west}, ymax = {3}, ymode = {log}, xmode = {log}, ymin = {1.0e-6}, width = {250}]\addplot+ [mark = {o}, red]coordinates {
	(3.0, 0.024703736671736934)
	(5.0, 0.05678130729862143)
	(9.0, 0.06566436047130246)
	(17.0, 0.029562348782997676)
	(33.0, 0.029228667328362788)
	(65.0, 0.029125369692167235)
};
\addlegendentry{interior}
\addplot+ [mark = {square}, blue]coordinates {
	(3.0, 0.4539619728179141)
	(5.0, 0.45351303546219696)
	(9.0, 0.4767399751170633)
	(17.0, 0.45392755480856123)
	(33.0, 0.42732549580194173)
	(65.0, 0.4311241790565438)
};
\addlegendentry{right end point}
\addplot+ [mark = {x}, green]coordinates {
	(3.0, 0.012431263367141574)
	(5.0, 0.033897418413467925)
	(9.0, 0.0888413906543163)
	(17.0, 0.1450213621766197)
	(33.0, 0.1898533441416946)
	(65.0, 0.22895436471047742)
};
\addlegendentry{singular point}
\addplot+ [mark = {none}, black,dotted]coordinates {
	(3.0, 0.7598356856515925)
	(5.0, 0.668740304976422)
	(9.0, 0.5773502691896257)
	(17.0, 0.4924790605054523)
	(33.0, 0.4172261448611506)
	(65.0, 0.3521856535823236)
};
\addlegendentry{$\mathcal O(N^{-\frac{1}{4}}\log(N))$}
\addplot+ [mark = {o}, red,dashed]coordinates {
	(3.0, 0.011444801398484972)
	(5.0, 0.023392754222046286)
	(9.0, 0.03815194886216755)
	(17.0, 0.008000742024471708)
	(33.0, 0.005635870463252923)
	(65.0, 0.004203936334317315)
};
\addplot+ [mark = {square}, blue,dashed]coordinates {
	(3.0, 0.01979496653640478)
	(5.0, 0.01459643360832763)
	(9.0, 0.04315894710182622)
	(17.0, 0.028440552863825963)
	(33.0, 0.02070552320700103)
	(65.0, 0.012497919155657344)
};
\addplot+ [mark = {x}, green,dashed]coordinates {
	(3.0, 0.4652945617146594)
	(5.0, 0.4188989055193851)
	(9.0, 0.3644781492119915)
	(17.0, 0.30424713498666606)
	(33.0, 0.2589899219009659)
	(65.0, 0.21937012044548793)
};
\nextgroupplot [height = {200}, legend pos = {south west}, ymax = {3}, ymode = {log}, xmode = {log}, ymin = {1.0e-6}, width = {250}]\addplot+ [mark = {o}, red]coordinates {
	(3.0, 0.2426958175642762)
	(5.0, 0.16901563513987758)
	(9.0, 0.027047226195171425)
	(17.0, 0.06764648017794139)
	(33.0, 0.03669384221862281)
	(65.0, 0.01893920318123306)
};
\addlegendentry{interior}
\addplot+ [mark = {square}, blue]coordinates {
	(3.0, 0.06821185226244478)
	(5.0, 0.11460403528062116)
	(9.0, 0.08107280555853713)
	(17.0, 0.05988563121046347)
	(33.0, 0.043388850545186104)
	(65.0, 0.0310612716194205)
};
\addlegendentry{right end point}
\addplot+ [mark = {x}, green]coordinates {
	(3.0, 0.39578953751417534)
	(5.0, 0.3884314184493156)
	(9.0, 0.38265926627220354)
	(17.0, 0.3791282349529578)
	(33.0, 0.37714798448390746)
	(65.0, 0.3760962570671933)
};
\addlegendentry{singular point}
\addplot+ [mark = {o}, red,dashed]coordinates {
	(3.0, 0.281481148)
	(5.0, 0.19341140727711448)
	(9.0, 0.056166765042330424)
	(17.0, 0.06661429555746268)
	(33.0, 0.03109591486196872)
	(65.0, 0.006110295405890809)
};
\addplot+ [mark = {square}, blue,dashed]coordinates {
	(3.0, 0.1718749999999989)
	(5.0, 0.13281250000000344)
	(9.0, 0.09912109375001266)
	(17.0, 0.07227897644046566)
	(33.0, 0.05196670346788912)
	(65.0, 0.037066875925426235)
};
\addplot+ [mark = {x}, green,dashed]coordinates {
	(3.0, 0.3984374999999998)
	(5.0, 0.38964843750000033)
	(9.0, 0.38341140747070335)
	(17.0, 0.37955285335192535)
	(33.0, 0.377376240712215)
	(65.0, 0.3762150294246537)
};
\nextgroupplot [height = {200}, legend pos = {south west}, ymax = {3}, ymode = {log}, xmode = {log}, ymin = {1.0e-6}, width = {250}]\addplot+ [mark = {o}, red]coordinates {
	(3.0, 0.002732688834518953)
	(5.0, 0.0043030735715180405)
	(9.0, 0.00655160228479337)
	(17.0, 0.0003111900774152733)
	(33.0, 3.599775082058022e-5)
	(65.0, 4.600027964148895e-5)
};
\addlegendentry{interior}
\addplot+ [mark = {square}, blue]coordinates {
	(3.0, 0.004848697197578115)
	(5.0, 0.00479899917949927)
	(9.0, 0.007768291855261558)
	(17.0, 0.0026057761480411928)
	(33.0, 0.0013825812248766582)
	(65.0, 0.0007559582373124801)
};
\addlegendentry{right end point}
\addplot+ [mark = {x}, green]coordinates {
	(3.0, 0.0656218766751448)
	(5.0, 0.055443522875961486)
	(9.0, 0.04288543945358269)
	(17.0, 0.03283073145134681)
	(33.0, 0.026235703821956582)
	(65.0, 0.021315143386393173)
};
\addlegendentry{singular point}
\addplot+ [mark = {none}, black,dotted]coordinates {
	(3.0, 0.18204784532536747)
	(5.0, 0.12426698691192238)
	(9.0, 0.09102392266268373)
	(17.0, 0.07059122477295224)
	(33.0, 0.05719993350053487)
	(65.0, 0.047911223149403714)
};
\addlegendentry{$\mathcal O(\log(N)^{-1})$}
\addplot+ [mark = {o}, red,dashed]coordinates {
	(3.0, 0.006499082623767)
	(5.0, 0.0031969553128531336)
	(9.0, 0.006763068303446133)
	(17.0, 0.0014676144816132136)
	(33.0, 0.0008096574426119524)
	(65.0, 0.0005040278525204334)
};
\addplot+ [mark = {square}, blue,dashed]coordinates {
	(3.0, 0.004489338796114489)
	(5.0, 0.004394019535158694)
	(9.0, 0.00892405776705027)
	(17.0, 0.004693913674573752)
	(33.0, 0.0028681442252384004)
	(65.0, 0.0014949158916567618)
};
\addplot+ [mark = {x}, green,dashed]coordinates {
	(3.0, 0.07116574511305483)
	(5.0, 0.057915550695212696)
	(9.0, 0.04512820768206602)
	(17.0, 0.0339452497584122)
	(33.0, 0.027115149322163398)
	(65.0, 0.022015249487918345)
};
\end{groupplot}

\end{tikzpicture}}%
  \\\resizebox{.95\columnwidth}{!}{\begin{tikzpicture}[]
\begin{groupplot}[group style={horizontal sep = 1.5cm, group size=3 by 1}]
\nextgroupplot [height = {200}, legend pos = {south west}, ymax = {3}, ymode = {log}, xmode = {log}, ymin = {1.0e-6}, width = {250}]\addplot+ [mark = {o}, red]coordinates {
(3.0, 0.003969732974359502)
(5.0, 0.028107837652524996)
(9.0, 0.03699089082520602)
(17.0, 0.0008888791369012378)
(33.0, 0.0005551976822663517)
(65.0, 0.0004519000460707983)
};
\addlegendentry{interior}
\addplot+ [mark = {square}, blue]coordinates {
(3.0, 0.016384899818099905)
(5.0, 0.015935962462382738)
(9.0, 0.039162902117249085)
(17.0, 0.01635048180874701)
(33.0, 0.01025157719787246)
(65.0, 0.006452893943270408)
};
\addlegendentry{right end point}
\addplot+ [mark = {x}, green]coordinates {
(3.0, 0.45484536771365713)
(5.0, 0.40851668593304763)
(9.0, 0.3535727136921992)
(17.0, 0.29739274216989586)
(33.0, 0.25256076020482093)
(65.0, 0.21345973963603812)
};
\addlegendentry{singular point}
\addplot+ [mark = {none}, black,dotted]coordinates {
(3.0, 0.7598356856515925)
(5.0, 0.668740304976422)
(9.0, 0.5773502691896257)
(17.0, 0.4924790605054523)
(33.0, 0.4172261448611506)
(65.0, 0.3521856535823236)
};
\addlegendentry{$\mathcal O(N^{-\frac{1}{4}}\log(N)$}
\addplot+ [mark = {o}, red,dashed]coordinates {
(3.0, 0.011444801398484972)
(5.0, 0.023392754222046286)
(9.0, 0.03815194886216755)
(17.0, 0.008000742024471708)
(33.0, 0.005635870463252923)
(65.0, 0.004203936334317315)
};
\addplot+ [mark = {square}, blue,dashed]coordinates {
(3.0, 0.01979496653640478)
(5.0, 0.01459643360832763)
(9.0, 0.04315894710182622)
(17.0, 0.028440552863825963)
(33.0, 0.02070552320700103)
(65.0, 0.012497919155657344)
};
\addplot+ [mark = {x}, green,dashed]coordinates {
(3.0, 0.4652945617146594)
(5.0, 0.4188989055193851)
(9.0, 0.3644781492119915)
(17.0, 0.30424713498666606)
(33.0, 0.2589899219009659)
(65.0, 0.21937012044548793)
};
\nextgroupplot [height = {200}, legend pos = {south west}, ymax = {3}, ymode = {log}, xmode = {log}, ymin = {1.0e-6}, width = {250}]\addplot+ [mark = {o}, red]coordinates {
(3.0, 0.2426958175642762)
(5.0, 0.16901563513987758)
(9.0, 0.027047226195171425)
(17.0, 0.06764648017794139)
(33.0, 0.03669384221862281)
(65.0, 0.01893920318123306)
};
\addlegendentry{interior}
\addplot+ [mark = {square}, blue]coordinates {
(3.0, 0.06821185226244478)
(5.0, 0.11460403528062116)
(9.0, 0.08107280555853713)
(17.0, 0.05988563121046347)
(33.0, 0.043388850545186104)
(65.0, 0.0310612716194205)
};
\addlegendentry{right end point}
\addplot+ [mark = {x}, green]coordinates {
(3.0, 0.39578953751417534)
(5.0, 0.3884314184493156)
(9.0, 0.38265926627220354)
(17.0, 0.3791282349529578)
(33.0, 0.37714798448390746)
(65.0, 0.3760962570671933)
};
\addlegendentry{singular point}
\addplot+ [mark = {o}, red,dashed]coordinates {
(3.0, 0.281481148)
(5.0, 0.19341140727711448)
(9.0, 0.056166765042330424)
(17.0, 0.06661429555746268)
(33.0, 0.03109591486196872)
(65.0, 0.006110295405890809)
};
\addplot+ [mark = {square}, blue,dashed]coordinates {
(3.0, 0.1718749999999989)
(5.0, 0.13281250000000344)
(9.0, 0.09912109375001266)
(17.0, 0.07227897644046566)
(33.0, 0.05196670346788912)
(65.0, 0.037066875925426235)
};
\addplot+ [mark = {x}, green,dashed]coordinates {
(3.0, 0.3984374999999998)
(5.0, 0.38964843750000033)
(9.0, 0.38341140747070335)
(17.0, 0.37955285335192535)
(33.0, 0.377376240712215)
(65.0, 0.3762150294246537)
};
\nextgroupplot [height = {200}, legend pos = {south west}, ymax = {3}, ymode = {log}, xmode = {log}, ymin = {1.0e-6}, width = {250}]\addplot+ [mark = {o}, red]coordinates {
(3.0, 0.002732688834518953)
(5.0, 0.0043030735715180405)
(9.0, 0.00655160228479337)
(17.0, 0.0003111900774152733)
(33.0, 3.599775082058022e-5)
(65.0, 4.600027964148867e-5)
};
\addlegendentry{interior}
\addplot+ [mark = {square}, blue]coordinates {
(3.0, 0.004848697197578115)
(5.0, 0.00479899917949927)
(9.0, 0.007768291855261558)
(17.0, 0.0026057761480411928)
(33.0, 0.0013825812248766582)
(65.0, 0.0007559582373124764)
};
\addlegendentry{right end point}
\addplot+ [mark = {x}, green]coordinates {
(3.0, 0.0656218766751448)
(5.0, 0.055443522875961486)
(9.0, 0.04288543945358269)
(17.0, 0.03283073145134681)
(33.0, 0.026235703821956582)
(65.0, 0.021315143386393173)
};
\addlegendentry{singular point}
\addplot+ [mark = {none}, black,dotted]coordinates {
(3.0, 0.18204784532536747)
(5.0, 0.12426698691192238)
(9.0, 0.09102392266268373)
(17.0, 0.07059122477295224)
(33.0, 0.05719993350053487)
(65.0, 0.047911223149403714)
};
\addlegendentry{$\mathcal O(\log(N)^{-1})$}
\addplot+ [mark = {o}, red,dashed]coordinates {
(3.0, 0.006499082623767)
(5.0, 0.0031969553128531336)
(9.0, 0.006763068303446133)
(17.0, 0.0014676144816132136)
(33.0, 0.0008096574426119524)
(65.0, 0.0005040278525204334)
};
\addplot+ [mark = {square}, blue,dashed]coordinates {
(3.0, 0.004489338796114489)
(5.0, 0.004394019535158694)
(9.0, 0.00892405776705027)
(17.0, 0.004693913674573752)
(33.0, 0.0028681442252384004)
(65.0, 0.0014949158916567618)
};
\addplot+ [mark = {x}, green,dashed]coordinates {
(3.0, 0.07116574511305483)
(5.0, 0.057915550695212696)
(9.0, 0.04512820768206602)
(17.0, 0.0339452497584122)
(33.0, 0.027115149322163398)
(65.0, 0.022015249487918345)
};
\end{groupplot}

\end{tikzpicture}}%
  \caption{\label{fig:other_asym} Above left: $f(x)=|x-r|^{1/4}$ with $r = 0.29384$. Above middle: function with a jump, given in equation \eqref{eqn:jumpfunction}. Above right: Dini-Lipschitz continuous function given in equation \eqref{eqn:dinilipschitzexample}. It has a strong cusp at $x = 0.29384$. Below: The pointwise error at an interior point (red circles), singular interior point (green crosses), and endpoint (blue squares) using Fourier extension with $T=2$ (full lines) and Legendre series (dashed lines) against the number of degrees of freedom, $N$. The black dotted lines without markers in the bottom left plot indicate the upper bounds on the algebraic rates of convergence predicted by Theorem \ref{thm:Linftyalgebraic}. The black dotted line without markers in the bottom right plot indicates the rate of convergence predicted by Lemma \ref{lem:superlemma}. Color figures online.}
\end{figure}

In all three cases, we compared the convergence of Fourier extension approximations and Legendre series. While there is sometimes a mismatch between the pessimistic prediction of Theorem \ref{thm:Linftyalgebraic} and Lemma \ref{lem:superlemma} for the convergence rates (see Section \ref{sec:bestuniform}), when we compare Fourier extensions and Legendre series, we observe agreement. See subsection 8.1 for a full discussion comparing convergence of Legendre series and Fourier extensions.

\section{Discussion}\label{sec:discussion}

We proved pointwise and uniform convergence results for Fourier extension approximations of functions in H\"older spaces and with local uniform Dini--Lipschitz conditions. This was achieved by proving upper bounds on the associated Lebesgue function and the decay rate of best uniform approximation error for Fourier extensions, then appealing to Lebesgue's Lemma.

\subsection{Comparison to Legendre series}

For a function $f \in L^2(-1,1)$, let us compare the Fourier extension approximant, $f_N$, to the Legendre series approximant,
\begin{equation*}
f^{\mathrm{L}}_N(x) = \sum_{k=0}^{N-1} a_k p^{L}_k(x), \qquad a_k = \frac12\int_{-1}^1 f(x) p^L_k(x) \, \d x,
\end{equation*}
where $p^L_k$ is the $k$th Legendre polynomial normalized so that $\frac12\int_{-1}^1 p^{L}_k(x)^2 \,\d x = 1$. 

The Lebesgue function of this approximation scheme is $\mathcal{O}(\log N)$ for $x \in [a,b] \subset (-1,1)$ and $\mathcal{O}(N^{\frac12})$ uniformly for $x \in [-1,1]$ \cite[Ch.~1]{jackson1930theory}, \cite{gronwall1913laplacesche}, which is precisely the same as the Lebesgue function for Fourier extensions (see Theorem \ref{thm:lebesguefunction}). Best uniform approximation by Fourier extensions was compared to best uniform approximation by algebraic polynomials in Section \ref{sec:bestuniform}. For any $f \in C^{k}([-1,1])$ for $k \in \mathbb{Z}_{\geq 0}$, we have
\begin{equation*}
E(f;\Hspace_N) = \mathcal{O}(N^{-k})\omega\left(f^{(k)};\frac{1}{N}\right) \text{ and } E(f;\mathcal{P}_N) = \mathcal{O}(N^{-k})\omega\left(f^{(k)};\frac{1}{N}\right).
\end{equation*}
It follows that for $C^{k,\alpha}([-1,1])$ functions, the statement of Theorem \ref{thm:Linftyalgebraic} also applies to Legendre series approximations. The localized convergence result for Dini--Lipschitz functions, Theorem \ref{thm:DiniLipschitz} also also applies to Legendre series \cite[Thm.~IV.5.6]{freud1971orthogonal}. Some of the experiments in Section \ref{sec:numerical} demonstrate these similarities.

Theorem \ref{thm:analyticexactsobolev} on exponential convergence differs from the exponential convergence results for Legendre series in two ways. First, the region in the complex plane which determines the rate of exponential convergence is determined by Bernstein ellipses for Legendre series, but by mapped Bernstein ellipses for Fourier extensions. Second, there is an upper limit of $\cot^2\left(\frac{\pi}{4\T}\right)$ for the rate of exponential convergence of Fourier extensions regardless of the region of analyticity, whereas for Legendre series the rate can be arbitrarily fast, and for entire functions the rate of convergence is superexponential \cite{wang2012convergence}.

\subsection{Extensions of this work}

It was mentioned in the introduction that our convergence results will be more applicable if we can extend them to regularized and oversampled interpolation versions of Fourier extensions, because those are the kinds of Fourier extensions for which stable and efficient algorithms have been developed.

Regularized Fourier extensions for a given regularization parameter $\varepsilon > 0$ can be defined as follows. Suppose the matrix $G \in \mathbb{R}^{N\times N}$,
\begin{equation*}
G_{k,j} = \mathrm{sinc}\left( (k-j)\frac{\pi}{\T}\right),
\end{equation*}
has eigendecomposition $G = VSV^*$. Let $S_\varepsilon$ be $S$ but with all entries less than $\varepsilon$ set to $0$. The coefficients $\mathbf{c}^\varepsilon \in \mathbb{C}^N$ of the regularized Fourier extension of $f \in L^2(-1,1)$ are given by
\begin{equation*}
\mathbf{c}^\varepsilon = V S_\varepsilon^\dagger V^* \mathbf{b},
\end{equation*}
where $b_k = \left(\frac\T2\right)^{\frac12}\int_{-1}^1 e^{-\frac{i\pi}{\T}kx}f(x)\,\d x$ \cite{matthysen2016fast}. In other words, the solution is projected onto the eigenvectors of $G$ whose eigenvalues are greater than or equal to $\varepsilon$. These eigenvectors are the Discrete Prolate Spheroidal Sequences (DPSSs), which are the Fourier coefficients of the DPSWFs $\{\xi_{k,N}\}_{k=1}^N$ discussed in Section \ref{sec:prolatekernel} \cite{slepian1978prolate}. The regularized Fourier extension, therefore, finds the best approximation not in $\Hspace_N$, but in the linear space $\Hspace_{N,\varepsilon} \subset \Hspace_N \subset L^2(-1,1)$, where
\begin{equation*}
\Hspace_{N,\varepsilon} = \myspan \left\{ \xi_{k,N} : \lambda_{k,N} \geq \varepsilon \right\}.
\end{equation*}
Therefore, if the Lebesgue function $\Lambda(x;P_{N,\varepsilon})$ (where $P_{N,\varepsilon}$ is the projection operator from $L^2(-1,1)$ to $\Hspace_{N,\varepsilon}$) and best approximation error functional $E(f;\Hspace_{N,\varepsilon})$ can be estimated as in Sections \ref{sec:prolatekernel} and \ref{sec:bestuniform}, then we immediately obtain pointwise convergence results for regularized Fourier extensions by Lebesgue's Lemma. Extensions to the regularized oversampled interpolation version of Fourier extensions can be conducted by considering the analogous quantities for the Periodic Discrete Prolate Spheroidal Sequences (PDPSSs) \cite{xu1984periodic,matthysen2016fast}.

Generalization of this work to the multivariate case would be extremely interesting, because the shape of the domain $\Omega \subset \mathbb{R}^d$ and regularity of its boundary will likely come into play \cite{matthysen2018function}.

\section*{Acknowledgements}

We benefited from useful discussions with Ben Adcock, Arno Kuijlaars,  Walter Van Assche, and Andrew Gibbs. The first author is grateful to FWO Research Foundation Flanders for a postdoctoral fellowship which he enjoyed during the writing of this paper.

\bibliographystyle{abbrv}
\bibliography{convergenceFEbib}

\appendix
\section{Asymptotics of Legendre polynomials on a circular arc}
\label{A:legendre}

Krasovsky derived the asymptotics of polynomials orthogonal on an arc $\{e^{i\theta} : \alpha \leq \theta \leq 2\pi - \alpha\}$ with respect to a positive analytic weight $f_\alpha(\theta)$ by Riemann--Hilbert analysis \cite{krasovsky2004gap,kuijlaars2004riemann,deift1999orthogonal}. We are interested in the case $f_\alpha(\theta) \equiv 1$, the Legendre polynomials on an arc of the unit circle. The following lemma follows Krasovsky's instructions on how to calculate an asymptotic expansion of these polynomials in various regions of the complex plane, where we restrict to the special case of the arc itself.
\begin{lemma}\label{lem:asymArcLeg}
For $\alpha \in (0,\pi)$, let $\{\phi_k(z,\alpha)\}_{k=0}^\infty$ be the polynomials in $z$ with positive leading coefficient, satisfying
\begin{equation*}
\frac{1}{2\pi}\int_{\alpha}^{2\pi-\alpha} \phi_n(e^{i\theta},\alpha) \overline{\phi_m(e^{i\theta},\alpha)}\, \d\theta = \delta_{n,m},
\end{equation*}
for $n,m = 0,1,2,\ldots$. Then there exists $\delta > 0$ such that for $\theta \in [\alpha + \delta,2\pi-\alpha-\delta]$,
\begin{eqnarray}\label{eqn:arclegendre1}
\phi_n(e^{i\theta},\alpha) &=& e^{i\frac{n}{2}\theta} \gamma^{-\frac12}\Bigg( e^{i\frac{\alpha-\pi}{4}}\left( \frac{\sin\left(\frac12(\theta-\alpha)\right)}{\sin\left(\frac12(\theta+\alpha)\right)} \right)^{\frac14} \cos\left(n\tau(\theta) - \frac{\pi}{4}\right) \\
\nonumber & & \qquad \qquad - \quad e^{-i\frac{\alpha-\pi}{4}}\left( \frac{\sin\left(\frac12(\theta+\alpha)\right)}{\sin\left(\frac12(\theta-\alpha)\right)} \right)^{\frac14} \sin\left(n\tau(\theta) - \frac{\pi}{4}\right) \Bigg) + \mathcal{O}(n^{-1})\label{eqn:arclegendre1b}
\end{eqnarray}
and for $\theta \in [\alpha,\alpha + \delta]$,
\begin{eqnarray}\label{eqn:arclegendre2}
\phi_n(e^{i\theta},\alpha) &=& e^{i\frac{n}{2}\theta} \gamma^{-\frac12} \left(\frac{\pi}{2}n\tau(\theta) \right)^{\frac12} \Bigg( e^{-i\frac{\alpha+\pi}{4}}\left( \frac{\sin\left(\frac12(\theta+\alpha)\right)}{\sin\left(\frac12(\theta-\alpha)\right)} \right)^{\frac14} J_0\left(n\tau(\theta)\right) \\
\nonumber& & \qquad \qquad \qquad \quad - \quad e^{i\frac{\alpha+\pi}{4}}\left( \frac{\sin\left(\frac12(\theta-\alpha)\right)}{\sin\left(\frac12(\theta+\alpha)\right)} \right)^{\frac14} J_1\left(n\tau(\theta)\right) \Bigg) + \mathcal{O}(n^{-\frac12})\label{eqn:arclegendre2b}
\end{eqnarray}
where 
\begin{equation*}
\tau(\theta) = \cos^{-1}\left( \frac{\cos\left(\theta/2\right)}{\gamma} \right) \quad \text{ and } \quad \gamma = \cos\left(\frac\alpha2\right).
\end{equation*}
The asymptotics for $\theta \in [2\pi-\alpha-\delta,2\pi-\alpha]$ can be determined using $\phi_n(e^{-i\theta},\alpha) = \overline{\phi_n(e^{i\theta},\alpha)}$.
\begin{proof}
In the notation of \cite{krasovsky2004gap}, the arc on which the polynomials are orthogonal is contained within ``region 1" of the complex plane. The asymptotics of $\phi_n(z,\alpha)$ in region 1 (for $f_\alpha(\theta) =1$) are given by
  \begin{equation}\label{eqn:region1}
  \phi_n(z,\alpha) = z^{\frac{n}{2}}\chi_n \gamma^n \left(\begin{array}{c} 1 \\ 0 \end{array}\right)^T R(z) M(z)\left(\begin{array}{c} (\psi(z)/\sqrt{z})^{n} \\ (\psi(z)/\sqrt{z})^{-n} \end{array} \right),
  \end{equation}
  where $\chi_n$ is the leading coefficient of $\phi_n(z,\alpha)$, $\gamma = \cos(\alpha/2)$, $R(z)$ is a $2\times 2$-matrix-valued function which is analytic and satisfies $R(z) = I + \mathcal{O}(n^{-1})$, $M(z)$ is a $2 \times 2$-matrix-valued analytic function whose expression changes depending on whether $z$ is in a neighbourhood of the endpoints of the arc or not (see below), and $\psi(z)$ is a conformal mapping of the outside of the arc to the outside of the unit circle, given by
  \begin{equation*}
  \psi(z) = \frac{1}{2\gamma}\left( z + 1 + \sqrt{(z-e^{i\alpha})(z-e^{-i\alpha})}\right).
  \end{equation*}
  The branch of the square root which is positive for positive arguments is taken. This is similar to \cite[Eqn.~2.56]{krasovsky2004gap}, which gives the asymptotics of $\phi_n(z,\alpha)$ in subsets of the complex plane outside a fixed neighbourhood of the arc. The job of this Lemma is to unpack this expression and convert to the variable $\theta \in [\alpha,2\pi-\alpha]$ where $z = e^{i\theta}$.
  
  The leading coefficient has asymptotic expression $\chi_n = \gamma^{-n-\frac12}(1+\mathcal{O}(n^{-1}))$ (by \cite[Eq.~2.58]{krasovsky2004gap}), and we have after some algebraic manipulation, for all $\theta \in [\alpha,2\pi-\alpha]$,
  \begin{equation*}
  \psi\left(e^{i\theta} \right) / \sqrt{e^{i\theta}} = \frac{1}{\gamma}\left(\cos\left(\frac{\theta}{2}\right) - i\sqrt{\sin\left(\frac12(\theta+\alpha)\right)\sin\left(\frac12(\theta-\alpha)\right)}\right).
  \end{equation*}
  Since $\left|\psi\left(e^{i\theta} \right) / \sqrt{e^{i\theta}} \right| = 1$ (which can be shown directly or inferred from the conformal mapping definition of $\psi$ above), the function $\tau(\theta)$ defined in the statement of the lemma maps $\theta \in [\alpha,2\pi-\alpha]$ to $\tau \in [0,\pi]$, and provides us with the simple identity, $\psi\left(e^{i\theta} \right) / \sqrt{e^{i\theta}} = e^{-i\tau(\theta)}$. Substituting this into equation \eqref{eqn:region1} we write,
  \begin{eqnarray*}
  \phi_n(e^{i\theta},\alpha) &=& (1+\mathcal{O}(n^{-1}))e^{i\frac{n}{2}\theta} \gamma^{-\frac12} (M_{11}(e^{i\theta})e^{-in\tau(\theta)} + M_{12}(e^{i\theta})e^{in\tau(\theta)}) \\
  & & \qquad + \quad \mathcal{O}(n^{-1})(M_{21}(e^{i\theta})e^{-in\tau(\theta)} + M_{22}(e^{i\theta})e^{in\tau(\theta)})\\
  &=& e^{i\frac{n}{2}\theta} \gamma^{-\frac12} (M_{11}(e^{i\theta})e^{-in\tau(\theta)} + M_{12}(e^{i\theta})e^{in\tau(\theta)}) + \mathcal{O}(n^{-1})e_n(\theta),
  \end{eqnarray*}
  where $e_n(\theta) = \left|M_{11}(e^{i\theta})e^{-in\tau(\theta)} + M_{12}(e^{i\theta})e^{in\tau(\theta)}\right| + \left|M_{21}(e^{i\theta})e^{-in\tau(\theta)} + M_{22}(e^{i\theta})e^{in\tau(\theta)}\right|$.
  
According to \cite[Eq.~2.23]{krasovsky2004gap}, there exists $\delta > 0$ so this asymptotic expression is valid for $\theta \in [\alpha + \delta,2\pi - \alpha - \delta]$ with $M$ set as the function,
  \begin{equation*}
  M(e^{i\theta}) = \frac12\left(\begin{array}{cc} a + a^{-1} & -i(a - a^{-1}) \\ i(a- a^{-1}) & a + a^{-1} 
  \end{array} \right), \qquad a(e^{i\theta}) = e^{i\frac{\alpha}{4}}\left( \frac{\sin\left(\frac12(\theta-\alpha)\right)}{\sin\left(\frac12(\theta+\alpha)\right)} \right)^{\frac14},
  \end{equation*}
  and for $\theta \in [\alpha,\alpha + \delta]$ with $M$ set as the function,
  \begin{eqnarray*}
  M(e^{i\theta}) &=& 2^{-\frac52}\left(\begin{array}{cc} a + a^{-1} & -i(a - a^{-1}) \\ i(a- a^{-1}) & a + a^{-1} 
  \end{array} \right) \left( \begin{array}{cc} 1 & -i \\ -i & 1 \end{array}\right) \\
  & & \cdot\left( \begin{array}{cc} \left(-i\pi n \tau\right)^{\frac12} & 0 \\ 0 & \left(-i\pi n \tau\right)^{-\frac12}\end{array}\right) \left( \begin{array}{cc} H_0^{(1)}(-n\tau) & H_0^{(2)}(-n\tau) \\ -i\pi n\tau(H_0^{(1)})'(-n\tau) & -i\pi n\tau(H_0^{(2)})'(-n\tau) \end{array}\right) \\
  & & \qquad \qquad \qquad \cdot\left( \begin{array}{cc} e^{in\tau} & 0 \\ 0 & e^{-in\tau} \end{array}\right),
  \end{eqnarray*}
where $H_\nu^{(j)}$ is the $\nu$th Hankel function of the $j$th kind \cite[Sec.~10.2]{NIST:DLMF}.
  
  For $\theta \in  [\alpha + \delta,2\pi - \alpha - \delta]$, we have $e_n(\theta) = \mathcal{O}(\delta^{-\frac14}) = \mathcal{O}(1)$. Therefore, grouping terms to convert exponentials into trigonometric functions we obtain  (\ref{eqn:arclegendre1}-\ref{eqn:arclegendre1b}).
  
 For $\theta \in [\alpha,\alpha+\delta]$, we can simplify the formula for $M$ and obtain,
 \begin{eqnarray*}
 M_{11}(e^{i\theta})e^{-in\tau(\theta)} &=& 2^{-\frac32}(-i\pi n \tau)^{\frac12}\left(a^{-1}H_0^{(1)}(-n\tau) -i a (H_0^{(1)})'(-n\tau) \right) \\
 M_{12}(e^{i\theta})e^{in\tau(\theta)} &=& 2^{-\frac32}(-i\pi n \tau)^{\frac12}\left(a^{-1}H_0^{(2)}(-n\tau) -i a (H_0^{(2)})'(-n\tau) \right) \\
  M_{21}(e^{i\theta})e^{-in\tau(\theta)} &=& 2^{-\frac32}(-i\pi n \tau)^{\frac12}\left(-ia^{-1}H_0^{(1)}(-n\tau) + a (H_0^{(1)})'(-n\tau) \right) \\
 M_{22}(e^{i\theta})e^{in\tau(\theta)} &=& 2^{-\frac32}(-i\pi n \tau)^{\frac12}\left(-ia^{-1}H_0^{(2)}(-n\tau) + a (H_0^{(2)})'(-n\tau) \right)
 \end{eqnarray*}
 Using the fact that $ J_\nu = \frac12(H_{\nu}^{(1)} + H_{\nu}^{(2)})$ \cite[Eq.~10.4.4]{NIST:DLMF}, $J_0' = - J_1$, $J_0(-z) = J_0(z)$, and $J_1(-z) = -J_1(z)$, we obtain equations (\ref{eqn:arclegendre2}-\ref{eqn:arclegendre2b}), sans the remainder term $\mathcal{O}(n^{-1})e_n(\theta)$, which we must show to be $\mathcal{O}(n^{-\frac12})$. Collecting the terms, we have,
 \begin{equation*}
 e_n(\theta) = 2^{-\frac12}\left(-i\pi n\tau \right)^{\frac12}\left(\left| a(e^{i\theta})^{-1}J_0(n\tau) - i a(e^{i\theta})J_1(n\tau)  \right| + \left| ia(e^{i\theta})^{-1}J_0(n\tau) - a(e^{i\theta})J_1(n\tau) \right|\right).
 \end{equation*}
For all $\tau \in \left[0,\frac{\pi}{2}\right]$, $\tau^2 \leq \frac{\pi^2}{4}(1-\cos(\tau))$, and $\cos(\tau) = \cos(\theta/2)/\gamma$, so $\tau^2 \leq \frac{\pi^2}{4\gamma}(\gamma-\cos(\theta/2)) = \frac{\pi^2}{2\gamma}\sin\left(\frac14(\theta+\alpha)\right)\sin\left(\frac14(\theta-\alpha)\right)$. For $\theta \in [\alpha,\pi]$, we have $\sin\left(\frac14(\theta-\alpha)\right) \leq \sin\left(\frac12(\theta-\alpha)\right)$, so we can conclude,
 \begin{equation*}
 \left(\frac{\tau(\theta)^2}{\sin\left(\frac12(\theta-\alpha)\right)}\right)^{\frac14} = \mathcal{O}(1),
 \end{equation*}
 uniformly for all $\theta \in [\alpha,\pi]$. Note also that Bessel functions are uniformly bounded in absolute value by $1$ (see \cite[Eq.~10.14.1]{NIST:DLMF}). This makes it clear that $e_n(\theta) = \mathcal{O}(n^{\frac12})$, as required.
 
 The fact that $\psi_n(e^{-i\theta},\alpha) = \overline{\psi_n(e^{i\theta},\alpha)}$ follows from the fact that the weight satisfies $f(-\theta) = f(\theta)$, so the coefficients of $\psi_n(z,\alpha)$ are real (see \cite[p.~288]{szeg1939orthogonal}).
\end{proof}

\end{lemma}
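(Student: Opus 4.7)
\textbf{Proof proposal for Lemma \ref{lem:asymArcLeg}.}

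The plan is to take Krasovsky's Riemann--Hilbert (RH) analysis of polynomials orthogonal on an arc of the unit circle \cite{krasovsky2004gap,kuijlaars2004riemann} essentially off the shelf, and to do the bookkeeping needed to convert the general asymptotic formulas there (which are written in terms of a global parametrix $M$, a local Bessel parametrix at each endpoint, a remainder $R = I + \mathcal{O}(n^{-1})$, and a conformal map $\psi$) into the explicit real-variable form stated in the lemma, specialised to $z = e^{i\theta}$ on the arc and to the trivial weight $f_\alpha \equiv 1$.

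Concretely I would proceed as follows. First, recall that the Fokas--Its--Kitaev RH problem for orthogonal polynomials, after the opening, lensing, and global-plus-local parametrix construction in \cite{krasovsky2004gap}, yields an expression of the form
\begin{equation*}
\phi_n(z,\alpha) = z^{n/2}\chi_n \gamma^n \begin{pmatrix} 1 & 0 \end{pmatrix} R(z)\, M(z) \begin{pmatrix}(\psi(z)/\sqrt{z})^{\,n} \\ (\psi(z)/\sqrt{z})^{-n}\end{pmatrix},
\end{equation*}
where $\gamma = \cos(\alpha/2)$, $\psi$ conformally maps the complement of the arc to the exterior of the unit disk, and the leading coefficient satisfies $\chi_n = \gamma^{-n-1/2}(1+\mathcal{O}(n^{-1}))$. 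The global parametrix $M$ is Szeg\H{o}-like (built from fourth roots and a diagonal phase) away from the endpoints; inside a fixed disc around each endpoint $e^{\pm i\alpha}$ it is replaced by a Bessel parametrix expressed via $H_0^{(1,2)}$ and their derivatives.

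Next I would do the change of variable $z = e^{i\theta}$. A short algebraic computation should give
\begin{equation*}
\frac{\psi(e^{i\theta})}{\sqrt{e^{i\theta}}} = \frac{1}{\gamma}\Bigl(\cos(\theta/2) - i\sqrt{\sin\tfrac{\theta+\alpha}{2}\sin\tfrac{\theta-\alpha}{2}}\Bigr),
\end{equation*}
whose modulus is $1$, so there is a real phase $\tau(\theta) = \cos^{-1}(\cos(\theta/2)/\gamma) \in [0,\pi]$ with $\psi(e^{i\theta})/\sqrt{e^{i\theta}} = e^{-i\tau(\theta)}$. Substituting, the matrix product collapses into a pair of exponentials $e^{\pm i n\tau(\theta)}$ multiplied by the first row of $R(e^{i\theta})M(e^{i\theta})$. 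In the bulk region $\theta \in [\alpha+\delta, 2\pi-\alpha-\delta]$ the matrix $M$ is explicit with entries built from $a(e^{i\theta}) = e^{i\alpha/4}((\sin\tfrac{\theta-\alpha}{2})/(\sin\tfrac{\theta+\alpha}{2}))^{1/4}$, and combining with $R = I + \mathcal{O}(n^{-1})$ and grouping $e^{\pm in\tau}$ into $\cos(n\tau-\pi/4)$ and $\sin(n\tau-\pi/4)$ yields \eqref{eqn:arclegendre1}.

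In the endpoint region $\theta \in [\alpha,\alpha+\delta]$ the Bessel parametrix injects a factor $(\pi n\tau)^{1/2}$ and entries $H_0^{(1,2)}(-n\tau)$, $(H_0^{(1,2)})'(-n\tau)$. Using $J_0 = \tfrac12(H_0^{(1)} + H_0^{(2)})$, $J_0' = -J_1$, and the parity relations $J_0(-\cdot) = J_0$, $J_1(-\cdot) = -J_1$, the sum over the two columns collapses to the $J_0$--$J_1$ combination in \eqref{eqn:arclegendre2}. The symmetry $\phi_n(e^{-i\theta},\alpha) = \overline{\phi_n(e^{i\theta},\alpha)}$ follows from the realness of the coefficients of $\phi_n(z,\alpha)$, which in turn follows from the fact that the weight is even in $\theta$ \cite{szeg1939orthogonal}.

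The main obstacle is not any single analytic step but uniform control of the error $\mathcal{O}(n^{-1/2})$ near the endpoint: the remainder $R = I + \mathcal{O}(n^{-1})$ multiplies the Bessel parametrix, which itself grows like $(n\tau)^{1/2}$ as $\theta \to \alpha^+$ with $\tau \to 0$. I would handle this by the same inequality used elsewhere in the paper, namely $\tau^2 \lesssim \sin\tfrac{\theta-\alpha}{2}$ uniformly for $\theta \in [\alpha,\pi]$ (which follows from $\tau^2 \leq \tfrac{\pi^2}{4}(1-\cos\tau) = \tfrac{\pi^2}{4\gamma}(\gamma - \cos(\theta/2))$), so that the ratio $\tau^2/\sin\tfrac{\theta-\alpha}{2}$ is $\mathcal{O}(1)$, cancelling the fourth-root factor from $a(e^{i\theta})^{-1}$ and keeping the $M$-entries bounded on the relevant scale. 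Combined with the uniform bound $|J_\nu|\leq 1$ (\cite[Eq.~10.14.1]{NIST:DLMF}), this yields the stated $\mathcal{O}(n^{-1/2})$ error, completing the proof.
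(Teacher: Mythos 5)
Your proposal is correct and follows essentially the same route as the paper's proof: unpacking Krasovsky's Riemann--Hilbert formula with the global and Bessel parametrices, identifying $\psi(e^{i\theta})/\sqrt{e^{i\theta}} = e^{-i\tau(\theta)}$, converting Hankel to Bessel functions at the endpoint, and controlling the $\mathcal{O}(n^{-1})\cdot\mathcal{O}((n\tau)^{1/2})$ remainder via the bound $\tau^2 \lesssim \sin\tfrac12(\theta-\alpha)$ together with $|J_\nu|\leq 1$. No gaps to report.
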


\end{document}